\documentclass[12pt]{article}
\usepackage{amsmath,amssymb,amsfonts,amsthm,graphicx}
\usepackage[usenames,dvipsnames]{color}
\newcommand{\E}{{\mathbb E}}

\newcommand{\F}{{\mathbb F}}

\newcommand{\be}{\begin{equation}}
\newcommand{\ee}{\end{equation}}
\newcommand{\one}{\text{\textup{\texttt{1}}}}
\newcommand{\old}[1]{}
\newcommand{\eps}{\varepsilon}

\newcommand{\yellow}[1]{}

\renewcommand{\P}{{\cal P}}

\newcommand{\R}{{\mathbb R}}

\newcommand{\hh}{\hspace{.03cm}}
\newcommand{\dd}{\mbox{d}}

\newtheorem{theorem}{Theorem}

\newtheorem{lemma}[theorem]{Lemma}
\newtheorem{proposition}[theorem]{Proposition}
\newtheorem{corollary}[theorem]{Corollary}

\begin{document}
\date{}

\title
{Permutations with fixed pattern densities}
\author{
Richard Kenyon\thanks{Department of Mathematics, Brown University, Providence, RI 02912. E-mail: {\tt rkenyon at math.brown.edu}}
\and Daniel Kr\'{a}l'\thanks{Mathematics Institute, DIMAP and
  Department of Computer Science, University of Warwick, Coventry CV4
  7AL, UK. E-mail: {\tt d.kral@warwick.ac.uk}}
\and Charles Radin\thanks{Department of Mathematics, University of Texas, Austin, TX 78712. E-mail: {\tt radin@math.utexas.edu}}
\and Peter Winkler\thanks{Department of Mathematics, Dartmouth
  College, Hanover, New Hampshire 03755. E-mail: {\tt Peter.Winkler@Dartmouth.edu}}
}

\maketitle

\begin{abstract}
We study scaling limits of random permutations (``permutons'')
constrained by having fixed densities of a finite number of patterns.
We show that the limit shapes are determined
by maximizing entropy over permutons with those constraints.
In particular, we compute (exactly or numerically)
the limit shapes with fixed $1\hh 2$ density,
with fixed $1\hh 2$ and \hbox{1\hh2\hh3} densities,
with fixed \hbox{1\hh2} density and the sum of \hbox{1\hh2\hh3} and \hbox{2\hh1\hh3} densities, and
with fixed \hbox{1\hh2\hh3} and \hbox{3\hh2\hh1} densities.
In the last case we explore a particular phase transition.
To obtain our results, we also provide a description of permutons using a dynamic construction.
\end{abstract}

\section{Introduction}
{We study pattern densities in permutations. 
A \emph{pattern} $\tau\in S_k$ in  a permutation $\sigma\in S_n$ (with $k\le n$) is a
$k$-element subset of indices
$1\le i_1<\dots<i_k\le n$ whose image under $\sigma$ has the same order as that under $\tau$. 
For example the first three indices in the permutation $4312$ have pattern $3\hh2\hh1$.
The \emph{density} of $\tau\in S_k$ in $\sigma\in S_n$ is $\binom{n}{k}^{-1}$ times the number of such subsets of indices.
 
Pattern \emph{avoidance} in permutations is a well-studied and rich area of combinatorics;
see \cite{Ki} for the history and the current state of the subject.
Less studied is the problem of determining the range of possible densities of patterns, and
the ``typical shape'' of permutations with constrained densities of (a fixed set of) patterns.
We undertake such a study here.
Specifically, we consider the densities of one or more patterns and consider the
\emph{feasible region}, or {phase} space $\F$, of possible values of
densities of the chosen patterns for permutations in $S_n$ in the limit of large $n$.
For densities in the interior of $\F$ we
study the shape of a typical permutation with those densities, again in the large $n$ limit.
We note that
the typical shape of pattern-avoiding permutations 
(which necessarily lie on the boundary of the feasible region $\F$)
has also recently been investigated~\cite{AM, DP, HRS, ML, MP1, MP2}.

To deal with these asymptotic
questions we show that the size of our target sets of constrained
permutations can be
estimated by maximizing a certain function over limit objects called
permutons.  Furthermore when---as appears to be usually the
case---the maximizing permuton is unique, properties of most
permutations in the class can then be deduced from it. After
setting up our general framework we work out several examples. 
To give further details we need some notation.
}

To a permutation $\pi\in S_n$ one can associate a
probability measure $\gamma_\pi$ on $[0,1]^2$ as
follows.  Divide $[0,1]^2$ into an $n\times n$ grid of squares of size
$1/n\times1/n$. Define the density of $\gamma_\pi$ on the square in
the $i$th row and $j$th column to be the constant $n$ if $\pi(i)=j$
and $0$ otherwise.  In other words, $\gamma_\pi$ is a geometric representation
of the permutation matrix of $\pi$.

Define a \emph{permuton} to be a probability measure $\gamma$ on $[0,1]^2$ with
uniform marginals:
\be \gamma([a,b] \times [0,1])=b-a=\gamma([0,1] \times [a,b]),\ \
\hbox{for all }0\le
a\le b\le 1.\ee
Note that $\gamma_\pi$ is a permuton for any permutation $\pi \in S_n$.
Permutons were introduced in \cite{HKMS1,HKMS2} with a different but equivalent definition;
the measure theoretic view of large permutations can be traced to
\cite{PS} and was used in \cite{GGKK,KP} as an analytic representation of
permutation limits equivalent to that used in \cite{HKMS1,HKMS2};
the term ``permuton'' first appeared, we believe, in \cite{GGKK}.

Let $\Gamma$ be the space of permutons.  {There is a natural topology on $\Gamma$,
the weak topology on probability measures, which can equivalently be defined as the metric topology}
defined by the metric $d_\square$ given
by $d_\square(\gamma_1,\gamma_2) = \max |\gamma_1(R) - \gamma_2(R)|$, where $R$
ranges over aligned rectangles in $[0,1]^2$.  This topology is also the same as
that given by the $L^\infty$ metric on the cumulative distribution
functions $G_i(x,y) = \gamma_i([0,x] \times [0,y])$. 
We say that a sequence of permutations $\pi_n$ with $\pi_n\in S_n$ \emph{converges} as $n\to\infty$ 
if the associated permutons converge in the above sense.

{Extending the definition above, given a permuton $\gamma$ the \emph{pattern density of $\tau$
in $\gamma$}}, denoted $\rho_\tau(\gamma)$, is by definition the probability that,
when $k$ points are selected independently from $\gamma$ and their $x$-coordinates are ordered,
the permutation induced by their $y$-coordinates is $\tau$.
For example, for $\gamma$ with probability density $g(x,y)\dd{}x\,\dd{}y$,
the density of pattern $1\hh2\in S_2$ in $\gamma$ is
\be\label{2}
\rho_{12}(\gamma)=2\int_{x_1<x_2\in[0,1]}\int_{y_1<y_2\in[0,1]}
g(x_1,y_1)g(x_2,y_2)\dd{}x_1\dd{}y_1\dd{}x_2\dd{}y_2.\ee 

It follows from results of \cite{HKMS1,HKMS2} that two permutons are equal if they
have the same pattern densities (for all $k$).

The notion of pattern density for permutons generalizes the notion for permutations.
Note however that the density of a pattern $\alpha\in S_k$ in a permutation $\tau\in S_n$ 
(defined to be the number of copies
of $\alpha$ in $\tau$, divided by $\binom{n}{k}$) will not generally
be equal to the density of $\alpha$ in the permuton $\gamma_\tau$; equality will only hold in the limit of large $n$.

\subsection{Results}

Theorem \ref{thm:tras} below (restated from the somewhat different form in Trashorras, \cite{Tr}) 
is a large deviations theorem for permutons: it describes explicitly how
many large permutations lie near a given permuton. 
The statement is essentially that the number of permutations in $S_n$ lying near a permuton $\gamma$ is
\be n!e^{(H(\gamma)+o(1))n},\ee
where $H(\gamma)$ is the ``permuton entropy'' (defined below). 

We use this large deviations theorem to prove Theorem \ref{thm:constrained}, which 
describes both the number and (when uniqueness holds)
limit shape of permutations in which a finite number of pattern densities
have been fixed. The theorem is a variational principle: it shows that the number of such permutations is determined
by the permuton entropy {\it maximized} over the set of permuton(s) having those fixed pattern densities.

Another construction we use
replaces permutons by families of \emph{insertion measures} $\{\mu_t\}_{t\in[0,1]}$,
which is analogous to building a permutation by inductively inserting one element at a time into a growing
list: for each $i\in[n]$ one inserts $i$ into a random location in the permuted list of the first $i-1$ elements.
This construction is used to describe explicitly the entropy maximizing permutons with fixed densities
of patterns of type $\ast\!\ast\dots\ast i$ (here each $\ast$ represents an element not exceeding the length of the pattern,
for example,
$\ast\ast 2$ represents the union of the patterns $1\hh3\hh2$ and $3\hh1\hh2$). We prove that for this family of patterns
the maximizing permutons are analytic, the entropy function as a function of the constraints is
analytic and strictly concave, and the optimal permutons are unique and have analytic probability densities.

The most basic example to which we apply our results, the
entropy-maximizing permuton for a fixed density $\rho_{12}$ of $12$
patterns, has probability density
$$g(x,y) = \frac{r(1-e^{-r})}{(e^{r(1-x-y)/2}-e^{r(x-y-1)/2}-e^{r(y-x-1)/2}+e^{r(x+y-1)/2})^2}$$
where $r$ is an explicit function of $\rho_{12}$.
See Figure \ref{12permuton}.
\begin{figure}[htbp]
\center{\includegraphics[width=1.8in]{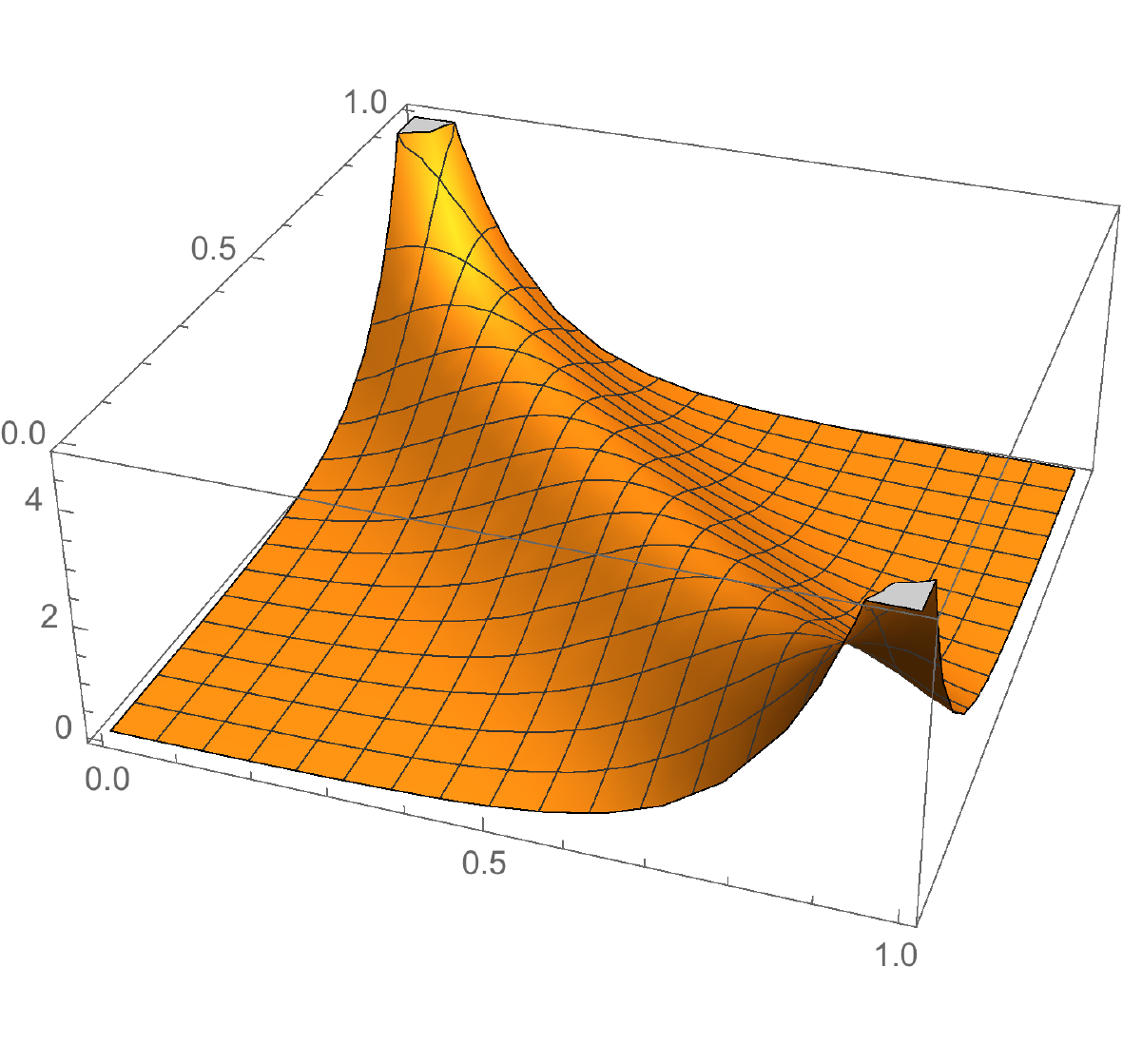}\includegraphics[width=1.8in]{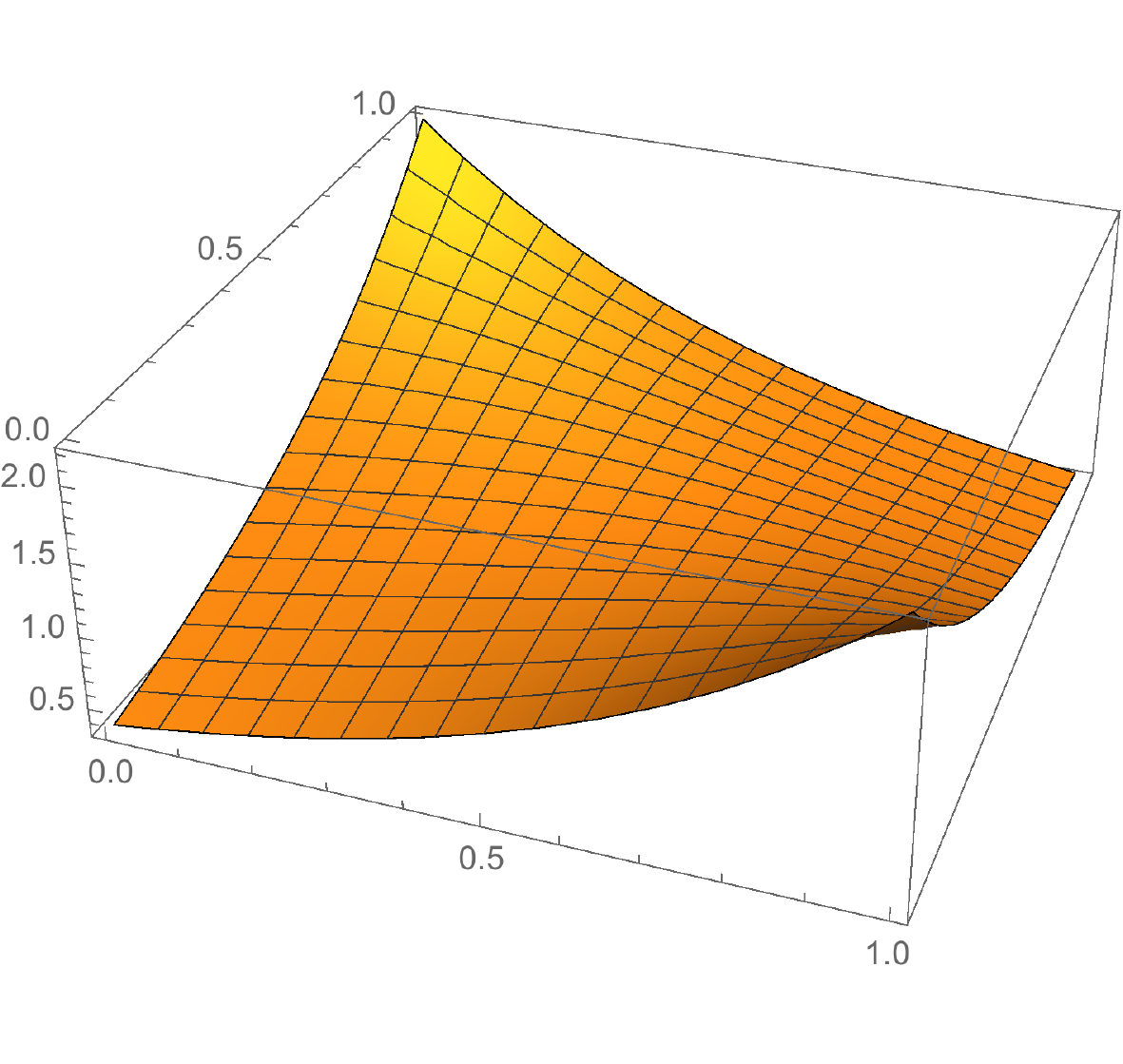}\includegraphics[width=1.8in]{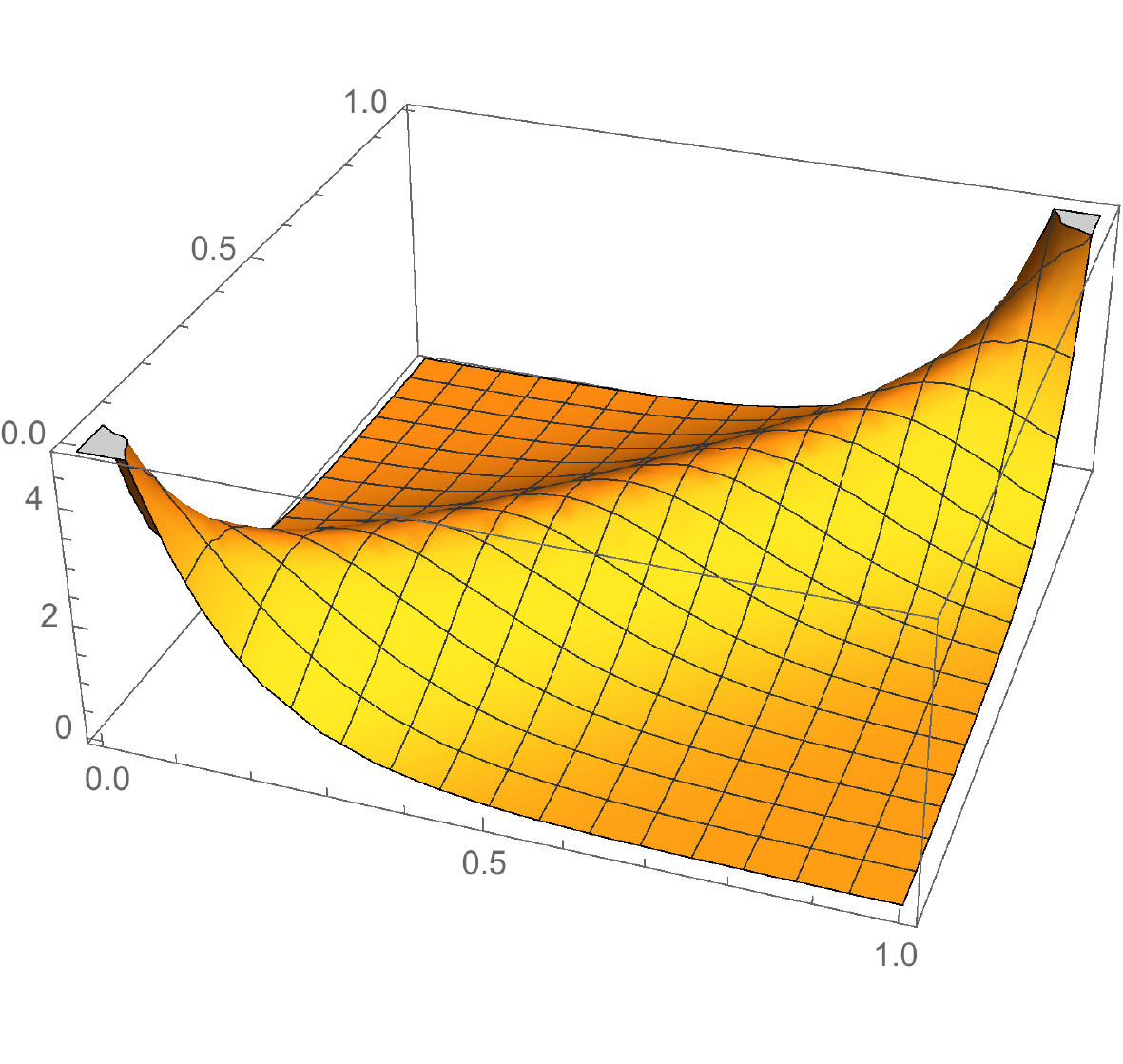}}
\caption{\label{12permuton}The permuton with fixed density $\rho$ of pattern $12$, shown for $\rho=.2,.4,.8$.}
\end{figure}

While maximizing permutons can be shown to satisfy certain explicit PDEs (see Section \ref{PDEsection}),
they can also exhibit a very diverse set of behaviors. Even in one of the simplest cases, that of fixed density
of the two patterns $12$ and $1\hh2\hh3$, the variety of shapes of permutons (and therefore of the approximating
permutations) is remarkable: see Figure \ref{scallopedtri}. In this case we prove that
the feasible region of densities is the so-called ``scalloped
triangle'' of Razborov \cite{R, R2} 
which also describes the space
of feasible densities for edges and triangles in the graphon
model.

Another example which has been studied recently \cite{EN, HLNPS1, HLNPS2} is the case of the two patterns $1\hh2\hh3$ and $3\hh2\hh1$.
In this case we describe a phase transition in the feasible region, where the maximizing
permuton changes abruptly. 

The variational principle can easily be extended to analyze other
constraints that are continuous in the permuton topology. For constraints that are not continuous,
for example the number of cycles of a fixed size, one can analyze an analogous
``weak'' characteristic, which is continuous, by applying the characteristic to patterns.
For example, while the number of fixed points of a permuton is not well-defined,
we can compute the expected number of fixed points for the permutation in $S_n$ obtained
by choosing $n$ points independently from the permuton, and analyze this quantity in the large $n$ limit.
This computation will be discussed in a subsequent paper \cite{KKRW2};
the result is that the expected weak number of fixed points is $$\int_0^1 g(x,x)\,\dd{}x$$ when $g$ has a continuous density.
Similar expressions hold for cycles of other lengths.

\subsection{Analogies with graphons}

For those who are familiar with variational principles for dense
graphs \cite{CV, CD,  RS1, RS2}, we note the following differences between the graph case and
the permutation case (see \cite{L} for background on graph asymptotics):

\begin{enumerate}
\item Although permutons serve the same purpose for permutations that graphons serve for graphs,
and (being defined on $[0,1]^2$) are superficially similar, they are measures (not symmetric functions)
and represent permutations in a different way.  (One {\em can} associate a graphon with a limit
of permutations, via comparability graphs of two-dimensional posets, but these have trivial entropy in
the Chatterjee-Varadhan sense \cite{CV} and we do not consider them here.)

\item The classes of constrained (dense) graphs considered in \cite{CV} have size about $e^{c n^2}$, $n$ being the
number of vertices and the (nonnegative) constant $c$ being the target of study.  Classes of
permutations in $S_n$ are of course of size at most $n! \sim e^{n(\log
  n - 1)}$ but the constrained ones we consider
here have size of order not $e^{cn \log n}$ for $c \in (0,1)$, as one might at first expect, but
instead $e^{n \log n - n + cn}$ where $c \in [-\infty,0]$ is the
quantity of interest.

\item The ``entropy'' function, i.e., the function of the limit structure to be maximized, is bounded for
graphons but unbounded for permutons.  This complicates the analysis for permutations.

\item The limit structures that maximize the entropy function tend, in the graph case, to be
combinatorial objects: step-graphons corresponding to what Radin, Ren
and Sadun call ``multipodal'' graphs \cite{RRS}.
In contrast, maximizing permutons at interior points of feasible regions seem always to be smooth
measures with analytic densities.  Although they are more complicated than maximizing graphons,
these limit objects are more suitable for classical variational analysis, e.g., differential
equations of the Euler-Lagrange type.
\end{enumerate}

\section{Variational principle}

For convenience, we denote the unit square $[0,1]^2$ by $Q$.

Let $\gamma$ be a permuton with density $g$ defined almost everywhere.  We compute the {\em permutation entropy} $H(\gamma)$ of $\gamma$ as follows:
\be
H(\gamma) = \int_Q -g(x,y) \log g(x,y)\,\dd{}x\, \dd{}y
\ee
where ``$0 \log 0$'' is taken as zero.  Then $H$ is finite whenever $g$
is bounded (and sometimes when it is not).
In particular for any $\sigma \in S_n$, we have $H(\gamma_\sigma)=-n(n \log{n} /n^2) = -\log n$ and 
therefore $H(\gamma_\sigma) \to -\infty$
for any sequence of increasingly large permutations even though $H(\lim \gamma_\sigma)$ may be finite.
Note that $H$ is zero on the uniform permuton (where $g(x,y) \equiv 1$) and negative (sometimes $-\infty$) on
all other permutons, since the function $z \log z$ is concave downward.  If $\gamma$ has no density, we define $H(\gamma)
= -\infty$.

We use the following large deviations principle, first stated in a somewhat different form by Trashorras
(Theorem 1 in \cite{Tr}); see also Theorem 4.1 in \cite{Mu}. In Section \ref{appendix}
we give an alternative proof.

\begin{theorem}[\cite{Tr}]\label{thm:tras} Let $\Lambda$ be a set of permutons, $\Lambda_n$ the set of permutations $\pi \in S_n$ with
$\gamma_\pi \in \Lambda$.  Then:
\begin{enumerate}
\item If $\Lambda$ is closed,
\be
\lim_{n \to \infty} \frac1n \log \frac{|\Lambda_n|}{n!} \le \sup_{\gamma \in \Lambda} H(\gamma);
\ee
\item If $\Lambda$ is open,
\be
\lim_{n \to \infty} \frac1n \log \frac{|\Lambda_n|}{n!} \ge \sup_{\gamma \in \Lambda} H(\gamma).
\ee
\end{enumerate}
\end{theorem}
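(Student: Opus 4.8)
\noindent\emph{Proof plan.} The plan is a Sanov-type argument resting on a single block-counting estimate. Fix an integer $k\ge1$ and, for a size-$n$ permutation, split $[n]$ into $k$ consecutive intervals $R_1,\dots,R_k$ and $k$ consecutive intervals $C_1,\dots,C_k$ of near-equal size, and correspondingly partition $Q$ into $k^2$ grid squares $B_{ij}$ of side $\approx1/k$. To $\pi\in S_n$ attach its \emph{profile} $M^\pi=(m_{ij})$, $m_{ij}=\#\{a\in R_i:\pi(a)\in C_j\}$, a nonnegative integer matrix with all row and column sums equal to $|R_i|,|C_j|\approx n/k$, and let $\widehat\gamma(M)$ denote the step-permuton that is uniform on each $B_{ij}$ with total mass $m_{ij}/n$ there. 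Three elementary observations carry the argument. (a) The number of $\pi$ with a prescribed profile $M$ is obtained by choosing, for each $i$, which elements of $R_i$ map into each $C_j$, and then a bijection onto each $C_j$, hence equals $N(M):=\prod_i\binom{|R_i|}{m_{i1},\dots,m_{ik}}\cdot\prod_j|C_j|!$, and Stirling's formula gives, uniformly over all admissible $M$,
\be
\frac1n\log\frac{N(M)}{n!}\;=\;H\!\big(\widehat\gamma(M)\big)\;+\;O\!\Big(\tfrac{k^2\log n}{n}\Big).
\ee
(b) The number of admissible profiles is at most $(n+1)^{k^2}=e^{o(n)}$. (c) There is an absolute constant $c$ with $d_\square\big(\gamma_\pi,\widehat\gamma(M^\pi)\big)\le c/k$ and $d_\square\big(\gamma,\gamma_{(k)}\big)\le c/k$ for every $\pi$ and every permuton $\gamma$, where $\gamma_{(k)}$ is obtained from $\gamma$ by replacing it on each $B_{ij}$ by the uniform measure of the same mass; indeed the relevant cumulative distribution functions agree at every grid corner and in between are squeezed between consecutive corner values, which by the marginal condition differ by $O(1/k)$. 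In addition I will use that $\Gamma$ is compact, that $H$ is upper semicontinuous on $\Gamma$ (it equals $-D(\,\cdot\,\|\,\mathrm{Leb})$, and relative entropy is lower semicontinuous), and that Jensen's inequality applied to $z\mapsto-z\log z$ on each block yields $H(\gamma_{(k)})\ge H(\gamma)$.

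\noindent\emph{Upper bound (closed $\Lambda$).} Fix $k$. If $\pi\in\Lambda_n$ then $\gamma_\pi\in\Lambda$, so by (c) the step-permuton $\widehat\gamma(M^\pi)$ lies in the neighbourhood $\Lambda^{c/k}:=\{\delta:d_\square(\delta,\Lambda)<c/k\}$. Grouping $\Lambda_n$ according to the profile and using (a) and (b),
\be
\limsup_{n\to\infty}\frac1n\log\frac{|\Lambda_n|}{n!}\;\le\;\sup\big\{H(\delta):\delta\in\Lambda^{c/k}\big\}.
\ee
The right-hand side decreases in $k$, and since $H\le0$ it is finite; choose $\delta^{(k)}\in\Lambda^{c/k}$ with $H(\delta^{(k)})$ within $1/k$ of it. By compactness a subsequence of $(\delta^{(k)})$ converges to some $\gamma^\ast$; as $d_\square(\delta^{(k)},\Lambda)\to0$ and $\Lambda$ is closed, $\gamma^\ast\in\Lambda$, and upper semicontinuity gives $\limsup_k H(\delta^{(k)})\le H(\gamma^\ast)\le\sup_{\gamma\in\Lambda}H(\gamma)$. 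Letting $k\to\infty$ in the displayed inequality proves part (i).

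\noindent\emph{Lower bound (open $\Lambda$).} It suffices to show $\liminf_n\frac1n\log\frac{|\Lambda_n|}{n!}\ge H(\gamma)$ for each $\gamma\in\Lambda$ and then take the supremum; assume $H(\gamma)>-\infty$. Pick $\delta>0$ with the $\delta$-ball around $\gamma$ inside $\Lambda$, and then $k$ with $d_\square(\gamma,\gamma_{(k)})<\delta/3$. For $n$ divisible by $k$, round the matrix $\big(n\,\gamma(B_{ij})\big)_{ij}$, which has integer row and column sums $n/k$, to a nonnegative integer matrix $M^{(n)}$ with the same margins and entries perturbed by $O(1)$ (a standard transportation-polytope rounding); then $\widehat\gamma(M^{(n)})\to\gamma_{(k)}$, so for all large $n$ every $\pi$ with profile $M^{(n)}$ has $\gamma_\pi$ within $\delta$ of $\gamma$, hence $\pi\in\Lambda_n$. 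Using (a) and the (trivial) continuity of $H$ on step-permutons of a fixed scale,
\be
\liminf_{n\to\infty}\frac1n\log\frac{|\Lambda_n|}{n!}\;\ge\;\lim_{n\to\infty}\frac1n\log\frac{N(M^{(n)})}{n!}\;=\;H(\gamma_{(k)})\;\ge\;H(\gamma),
\ee
the last step by Jensen. (For $n$ not divisible by $k$, delete the last $O(k)$ elements of $\pi$ and reinsert them arbitrarily: this changes $\gamma_\pi$ by $O(k/n)$ and the log-count by $O(k\log n)=o(n)$.) This proves part (ii).

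\noindent\emph{Where the difficulty lies.} The combinatorics and Stirling estimate (a) and the distance bounds (c) are routine; the real content is the limit $k\to\infty$ in the upper bound. The block count never undercounts but generically overcounts, because coarsening can only raise entropy ($H(\gamma_{(k)})\ge H(\gamma)$); one must show this excess disappears, and that is precisely where compactness of $\Gamma$ together with upper semicontinuity of $H$ are indispensable --- without them the scale-$k$ bounds would not collapse to $\sup_{\gamma\in\Lambda}H(\gamma)$. The lower bound, in contrast, needs only the one-sided inequality $H(\gamma_{(k)})\ge H(\gamma)$ and the fact that $\gamma_{(k)}$ is approximated in $d_\square$, to within $o(1)$, by exact-margin integer profiles.
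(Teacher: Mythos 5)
Your proof is correct, and at its core it uses the same mechanism as the paper's: partition $Q$ into a $k\times k$ grid, count permutations with a given block profile via multinomial coefficients and Stirling, use a transportation-polytope (Baranyai) rounding to manufacture an integer profile for the lower bound, and use compactness of $\Gamma$ together with upper semicontinuity of $H$ to pass to $\sup_{\gamma\in\Lambda}H(\gamma)$ in the upper bound. The paper packages the block count as a ``ball lemma'' (Lemma~\ref{lemma:ball}): a two-sided estimate on $|B_n(\gamma,\eps)|$ for a fixed permuton $\gamma$, after which the closed case follows by a finite subcover and the open case by placing a small ball inside $\Lambda$. You instead group $\Lambda_n$ directly by profile and bound the whole sum at once, which reaches the same displayed inequality $\limsup_n\frac1n\log(|\Lambda_n|/n!)\le\sup_{\delta\in\Lambda^{c/k}}H(\delta)$ without the intermediate lemma.

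The genuinely different ingredient is how you obtain upper semicontinuity. The paper derives it (Lemma~\ref{lemma:usc}) from Proposition~\ref{thm:converge}, the fact that the Riemann-type block entropies $H(\gamma^m)$ converge to $H(\gamma)$ (including divergence to $-\infty$ in the singular and non-integrable cases); that proposition is a nontrivial piece of real analysis occupying a whole section, and it is also what lets the paper replace $H(\gamma^m)$ by $H(\gamma)+o(\eps^0)$ in the $|V|$ estimate. You sidestep it entirely by observing $H(\gamma)=-D(\gamma\,\|\,\mathrm{Leb})$ and invoking classical lower semicontinuity of relative entropy in the weak topology (e.g., via the Donsker--Varadhan variational formula). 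Since your lower bound only ever needs the one-sided Jensen inequality $H(\gamma_{(k)})\ge H(\gamma)$ and your upper bound only needs USC and compactness, you never need the full two-sided convergence of the block entropies. This is a real streamlining: it removes the dependence on Proposition~\ref{thm:converge} from the large-deviations theorem. What the paper's route buys in return is self-containedness and a quantitative ball estimate (Lemma~\ref{lemma:ball}) that is of independent interest, whereas your argument leans on a standard but external fact about relative entropy.

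Two small points worth tightening if you were to write this out in full: in the lower bound you should note explicitly that $k$ is chosen large enough that both $d_\square(\gamma,\gamma_{(k)})<\delta/3$ and $c/k<\delta/3$, so the triangle inequality $d_\square(\gamma_\pi,\gamma)\le c/k+d_\square(\widehat\gamma(M^{(n)}),\gamma_{(k)})+d_\square(\gamma_{(k)},\gamma)<\delta$ closes for large $n$; and the theorem's $\lim$ should be read as $\limsup$ in part (1) and $\liminf$ in part (2), as you do.
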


To make a connection with our applications to large
constrained permutations, fix some finite set $\P = \{\pi_1,\dots,\pi_k\}$ of patterns.
Let $\alpha = (\alpha_1,\dots,\alpha_k)$ be a vector of desired pattern densities. We then define
two sets of permutons:
\be
\Lambda^{\alpha,\varepsilon}=\{\gamma\in \Gamma\,|\, |\rho_{\pi_j}(\gamma) - \alpha_j| < \varepsilon~\mbox{for each}~1 \le j \le k\}
\ee
and 
\be
\Lambda^{\alpha}=\{\gamma\in \Gamma\,|\, \rho_{\pi_j}(\gamma) = \alpha_j~\mbox{for each}~1 \le j \le k\}.
\ee

With that notation, and the understanding that
$\Lambda_n^{\alpha,\varepsilon} = \Lambda^{\alpha,\varepsilon} \cap
\gamma(S_n)$, where $\gamma(\alpha)=\gamma_\alpha$ as before, our first main
result is:

\begin{theorem}\label{thm:constrained}
$$\lim_{\varepsilon \downarrow 0}\lim_{n\to\infty}\frac1n \log \frac{|\Lambda_n^{\alpha,\varepsilon}|}{n!} = \max_{\gamma \in \Lambda^{\alpha}} H(\gamma).$$
\end{theorem}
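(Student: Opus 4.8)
The plan is to deduce Theorem~\ref{thm:constrained} from the large deviations principle of Theorem~\ref{thm:tras} applied to the sets $\Lambda^{\alpha,\eps}$ and $\Lambda^{\alpha}$, using continuity of the pattern density functionals $\rho_{\pi_j}$ and a compactness/semicontinuity argument for the entropy $H$. First I would record three facts. (i) Each map $\gamma\mapsto\rho_{\pi_j}(\gamma)$ is continuous on $\Gamma$ in the weak topology; this is standard for permutons (the pattern density is an integral of a bounded function against a product measure, and convergence in $d_\square$ implies convergence of such integrals), so $\Lambda^{\alpha,\eps}$ is open and $\Lambda^{\alpha}$ is closed. (ii) $\Gamma$ is compact in the weak topology, hence $\Lambda^{\alpha}$ is compact. (iii) $H$ is upper semicontinuous on $\Gamma$: if $\gamma_n\to\gamma$ then $\limsup_n H(\gamma_n)\le H(\gamma)$, which follows from the fact that $-\int g\log g$ is the negative of a relative entropy against Lebesgue measure on $Q$ and relative entropy is lower semicontinuous in the weak topology. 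Facts (ii) and (iii) together guarantee that $\max_{\gamma\in\Lambda^{\alpha}}H(\gamma)$ is attained, so the right-hand side of the theorem makes sense.

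Next I would establish the upper bound. For any $\eps>0$, the closure $\overline{\Lambda^{\alpha,\eps}}$ is contained in $\Lambda^{\alpha,2\eps}$ (say), which in turn is contained in the closed set $\{\gamma : |\rho_{\pi_j}(\gamma)-\alpha_j|\le 2\eps\ \text{for all }j\}$. Applying part~1 of Theorem~\ref{thm:tras} to this closed set gives
\[
\limsup_{n\to\infty}\frac1n\log\frac{|\Lambda_n^{\alpha,\eps}|}{n!}\ \le\ \sup_{|\rho_{\pi_j}(\gamma)-\alpha_j|\le 2\eps}H(\gamma).
\]
Now let $\eps\downarrow 0$. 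By compactness of $\Gamma$ and upper semicontinuity of $H$, the nested intersection of these shrinking closed constraint sets is exactly $\Lambda^{\alpha}$, and $\sup H$ over them decreases to $\sup_{\gamma\in\Lambda^{\alpha}}H(\gamma)=\max_{\gamma\in\Lambda^{\alpha}}H(\gamma)$; this is a routine $\Gamma$-convergence-style argument using that $H$ attains its max on each compact level set. Hence the $\limsup$ of the left-hand side, after taking $\eps\downarrow 0$, is at most $\max_{\gamma\in\Lambda^{\alpha}}H(\gamma)$.

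For the lower bound, fix $\eps>0$. The set $\Lambda^{\alpha,\eps}$ is open, so part~2 of Theorem~\ref{thm:tras} gives
\[
\liminf_{n\to\infty}\frac1n\log\frac{|\Lambda_n^{\alpha,\eps}|}{n!}\ \ge\ \sup_{\gamma\in\Lambda^{\alpha,\eps}}H(\gamma)\ \ge\ \sup_{\gamma\in\Lambda^{\alpha}}H(\gamma),
\]
since $\Lambda^{\alpha}\subseteq\Lambda^{\alpha,\eps}$. This bound is uniform in $\eps$, so it survives the limit $\eps\downarrow 0$. Combining with the upper bound and noting that $\eps\mapsto |\Lambda_n^{\alpha,\eps}|$ is monotone (so the double limit $\lim_{\eps\downarrow 0}\lim_{n\to\infty}$ exists) yields the claimed equality.

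The main obstacle is the interchange of limits on the upper-bound side: one must be careful that $\lim_{\eps\downarrow0}\sup\{H(\gamma):|\rho_{\pi_j}(\gamma)-\alpha_j|\le\eps\ \forall j\}$ really equals $\sup_{\gamma\in\Lambda^{\alpha}}H(\gamma)$ rather than overshooting it, because $H$ is unbounded above on $\Gamma$ as a whole (as emphasized in the text, $H(\gamma_\sigma)=-\log n\to-\infty$ is not the issue, but $H$ has no finite upper bound over all of $\Gamma$, so the constraint sets could a priori carry permutons of arbitrarily large entropy near the boundary). The resolution is that for each fixed $\eps>0$ the constraint set is a closed, hence compact, subset of $\Gamma$ on which the upper semicontinuous $H$ attains a finite maximum; as $\eps\downarrow0$ these maxima are nonincreasing and bounded below by $\max_{\Lambda^{\alpha}}H$, and any subsequential limit of near-maximizers has a convergent subsequence (compactness) whose limit lies in $\Lambda^{\alpha}$ (continuity of the $\rho_{\pi_j}$) with entropy at least the limit of the maxima (upper semicontinuity), forcing equality. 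Once this is in place the proof is complete.
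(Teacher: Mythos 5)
Your proof is correct and takes essentially the same route as the paper: the lower bound via part~2 of Theorem~\ref{thm:tras} on the open set $\Lambda^{\alpha,\eps}$, and the upper bound via part~1 applied to a slightly larger closed constraint set, with compactness of $\Gamma$, continuity of the $\rho_{\pi_j}$, and upper semicontinuity of $H$ combining to show the suprema converge down to $\max_{\Lambda^\alpha}H$ as $\eps\downarrow 0$. One small inaccuracy in your discussion of the ``main obstacle'': $H$ is in fact bounded above by $0$ on all of $\Gamma$ (it is nonpositive by Jensen, as the paper notes), so the worry about permutons of ``arbitrarily large entropy'' near the boundary does not arise --- but this does not affect the validity of your compactness-plus-semicontinuity argument, which is exactly the right mechanism.
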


The value $\max_{\gamma \in \Lambda^{\alpha}} H(\gamma)$ (which is guaranteed by the theorem to exist, but may be $-\infty$)
will be called the {\em constrained entropy} and denoted by
$s(\alpha)$. In Section~\ref{sect:proof-constrained} we will prove Theorem~\ref{thm:constrained}.

Theorem~\ref{thm:constrained} puts us in a position to try to describe and enumerate permutations with some given pattern densities.  It does not, of course,
guarantee that there is just one $\gamma \in \Lambda^{\alpha}$ that maximizes $H(\gamma)$, nor that there is one with finite entropy.  As we shall see it
seems to be the case that interior points in feasible regions for pattern densities do have permutons with finite entropy, and {\em usually} just one optimizer.
Points on the boundary of a feasible region (e.g., pattern-avoiding permutations) often have only singular permutons, and since the latter always have
entropy $-\infty$, Theorem~\ref{thm:constrained} will not be of direct use there.

{
\section{Feasible regions and entropy optimizers}

We collect here some general facts about
feasible regions and entropy optimizers, making use of concavity of entropy and the ``heat flow on permutons''.

\subsection{Heat flow on permutons}
The heat flow is 
a continuous flow on the space of permutons
with the property that for any permuton $\mu=\mu_0$ and any positive time $t>0$,
$\mu_t$ has analytic density (and thus finite entropy).

The flow after time $t$ is given by the action of the heat operator $e^{t\Delta}$ where $\Delta$ is the 
Laplacian on the square with reflecting boundary conditions.
One can describe the flow concretely as follows.
First, one can describe a permuton $\mu$ by its characteristic function $\hat g(u,v) = \E[e^{i(ux+vy)}]$.
In fact since we are on the unit square we can use instead the discrete Fourier cosine series
$$\hat g(j,k) = \E[\cos(\pi j x)\cos(\pi k y)]$$
with $j,k\ge 0$.

The operator $e^{t\Delta}$ acts on the coefficients by multiplication by $e^{-(j^2+k^2)t}$:
$$\hat g_t(j,k) = \hat g_0(j,k)e^{-(j^2+k^2)t}.$$ Note that the heat flow
preserves the marginals, that is $\hat g_t(j,0) = \delta_j = \hat g_0(j,0)$ and $\hat g_t(0,k)=\delta_k=\hat g_0(0,k)$.

For any $t>0$ the Fourier coefficients $\hat g_t(j,k)$ then decay exponentially quickly so that $\hat g_t(j,k)$ are the
Fourier coefficients of a measure with analytic density.

\subsection{Elementary consequences for feasible regions}

Let $R$ be the feasible region for permutons with some finite set of pattern densities. Let
$R_M$ be the subset of $R$ consisting of points representable by
an analytic permuton with entropy at least $-M$, and $R_*$ those representable by permutons with finite
entropy.

Entropy is upper-semicontinuous on $R$ (just as it is on the
space $\Gamma$ of all permutons). So $R_M$ is closed.

\begin{theorem}\label{thm:Dense} $R_*$ is dense in $R$.
\end{theorem}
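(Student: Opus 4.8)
**Proof proposal for Theorem (R_* dense in R):**

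The plan is to show that for any permuton $\gamma \in \Gamma$, the heat flow images $\gamma_t = e^{t\Delta}\gamma$ converge to $\gamma$ as $t \downarrow 0$ in the permuton topology, and that each $\gamma_t$ has finite entropy. Granting this, the result follows quickly: given a point $p \in R$, pick a permuton $\gamma$ with the prescribed pattern densities realizing $p$; then $\gamma_t \to \gamma$, so $\rho_{\pi_j}(\gamma_t) \to \rho_{\pi_j}(\gamma) = p_j$ for each $j$ (pattern densities are continuous in the permuton topology, since each $\rho_\tau$ is a bounded continuous functional — indeed a polynomial in the values $\gamma(R)$ on rectangles, by the multiplicative structure of the sampling definition). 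Hence the points $q(t) := (\rho_{\pi_1}(\gamma_t),\dots,\rho_{\pi_k}(\gamma_t)) \in R$ converge to $p$, and since each $\gamma_t$ has finite entropy, $q(t) \in R_*$. This exhibits $p$ as a limit of points of $R_*$.

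The two facts about heat flow I would establish are: (i) $e^{t\Delta}$ preserves the uniform marginals — this is noted in the excerpt, since the Fourier coefficients $\hat g_t(j,0) = \hat g_0(j,0)$ and $\hat g_t(0,k) = \hat g_0(0,k)$ are unchanged, so $\gamma_t$ is genuinely a permuton; and (ii) for every $t > 0$, $\gamma_t$ has an analytic density, hence is bounded on $Q$, hence has finite entropy. Fact (ii) is also essentially asserted in the excerpt (``for any $t>0$ the Fourier coefficients $\hat g_t(j,k)$ decay exponentially quickly so that they are the Fourier coefficients of a measure with analytic density''), so I would simply invoke it. The remaining point is continuity of the flow at $t = 0$: since $\gamma$ is a probability measure on the compact square, $e^{t\Delta}\gamma$ is obtained by convolving (with the reflected heat kernel) and one has $\gamma_t \to \gamma$ weakly as $t \downarrow 0$ because the heat kernel is an approximate identity; equivalently, in Fourier terms, $\hat g_t(j,k) = \hat g_0(j,k) e^{-(j^2+k^2)t} \to \hat g_0(j,k)$ for each $(j,k)$, and pointwise convergence of Fourier coefficients of measures on a compact set implies weak convergence.

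The main obstacle — though it is a mild one — is making sure the topology bookkeeping is airtight: one needs weak convergence $\gamma_t \to \gamma$ to be equivalent to convergence in $d_\square$ (stated in the excerpt) and one needs each $\rho_{\pi_j}$ to be continuous for that topology. The latter is where a little care is warranted: $\rho_\tau(\gamma)$ is an integral of a product of $k$ copies of $\gamma$ against the indicator of a region determined by the relative order of $k$ sample points, and such a multilinear functional is continuous in the weak topology on probability measures on $Q^k$ provided the integrand's discontinuity set (the ``ties'' where two $x$- or $y$-coordinates coincide) has measure zero under $\gamma^{\otimes k}$. For a permuton $\gamma$ with uniform marginals this tie-set indeed has measure zero, so $\rho_\tau$ is $\gamma^{\otimes k}$-a.e. continuous and hence continuous at $\gamma$ by the mapping theorem; this suffices for our purposes. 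Once these standard facts are in place, the proof is the three-line argument of the first paragraph.
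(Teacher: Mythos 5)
Your proposal is correct and is essentially the paper's own argument: the paper's proof is the one-line observation that perturbing any permuton by the heat flow for a small time yields an analytic permuton with finite entropy whose pattern densities are close to the original. You have simply spelled out the supporting details (weak convergence of $\gamma_t$ to $\gamma$ as $t\downarrow 0$, continuity of pattern densities in the permuton topology, preservation of marginals) that the paper leaves implicit.
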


\begin{proof}
Any permuton $\gamma$ may be perturbed (by the heat flow for small time, thus moving densities only a small amount)
to achieve an analytic permuton with finite entropy.
\end{proof}

\begin{theorem}\label{thm:interior}
Let $C$ be a topological sphere 
in $R_M$.  Then the interior of $C$ is contained in $R_M$.
\end{theorem}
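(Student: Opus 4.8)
The plan is to exploit the fact that the entropy function $H$ is concave on the convex set $\Gamma$ of all permutons, together with the fact that the feasible region $R$ is the image of $\Gamma$ under a continuous linear map $\Phi\colon \gamma \mapsto (\rho_{\pi_1}(\gamma),\dots,\rho_{\pi_k}(\gamma))$ into $\R^k$. Recall from Theorem~\ref{thm:constrained} and the discussion after it that for $\alpha \in R$ the constrained entropy $s(\alpha) = \max_{\gamma \in \Lambda^\alpha} H(\gamma)$ is attained; I first want to observe that $s$ is concave on $R$. Indeed, given $\alpha, \beta \in R$ with optimizers $\gamma_\alpha, \gamma_\beta$, the permuton $t\gamma_\alpha + (1-t)\gamma_\beta$ lies in $\Lambda^{t\alpha + (1-t)\beta}$ (the pattern-density maps are \emph{multilinear}, but linearity in each factor is enough to see the convex combination of feasible points is feasible; more carefully, $\Phi$ is affine on $\Gamma$ only for $k=2$-patterns, so one should instead use that $R$ is convex because it is a continuous image of the convex compact set $\Gamma$ — this needs the separate, standard observation that $R$ is convex, which follows since $R$ is known to be convex in all the cases of interest, or can be derived from the "superposition" trick of overlaying independent permutons). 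Granting that $R$ is convex and $s$ is concave and upper-semicontinuous on $R$, the set $R_M = \{\alpha \in R : s(\alpha) \ge -M\}$ is a \emph{convex} closed subset of $R$.

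The second and main step is purely a fact about convex sets: if $K \subseteq \R^k$ is convex and $C \subseteq K$ is (the image of) a topological sphere — i.e., $C = \partial D$ for some closed topological ball $D$ — then the interior of $D$ (more precisely, the bounded region enclosed by $C$) is contained in $\mathrm{conv}(C) \subseteq K$. This is the step I expect to be the crux. The cleanest route is: let $p$ be a point enclosed by $C$ but suppose $p \notin \mathrm{conv}(C)$. By the separating hyperplane theorem there is a linear functional $\ell$ with $\ell(p) < \ell(q)$ for all $q \in C$; but then $C$ lies entirely in the open half-space $\{\ell > \ell(p)\}$, which is contractible, so $C$ (a sphere) bounds in that half-space and cannot enclose the point $p$ lying outside it — contradiction via a degree/winding-number argument (the topological sphere $C$ in $\R^k \setminus \{p\}$ would be null-homotopic, contradicting that it links $p$). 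Hence every point enclosed by $C$ lies in $\mathrm{conv}(C) \subseteq R_M$, which is exactly the assertion "the interior of $C$ is contained in $R_M$."

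The main obstacle is making precise what "interior of $C$" and "enclosed by $C$" mean and pinning down the convexity of $R$ and concavity of $s$ cleanly; once those are in hand the topological input is a standard application of separation plus the fact that a sphere in $\R^k$ cannot be separated by a hyperplane from a point it encloses. I would state the needed convexity of $R$ as a lemma (it follows from the superposition construction: given two permutons realizing $\alpha$ and $\beta$, one builds permutons realizing any convex combination of their density vectors, e.g.\ by the "$xy$-shuffle" that interleaves two permutons, whose pattern densities are the corresponding convex combinations — this is routine but should be spelled out), and similarly record that $s$ is concave as an immediate consequence of concavity of $H$ and this construction. With those in place the theorem reduces to the elementary convex-geometric statement above, and the proof is short.
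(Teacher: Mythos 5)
Your proposal hinges on the claim that the feasible region $R$ (and hence $R_M$) is convex, and everything downstream — concavity of $s$, separation-by-hyperplane, $\mathrm{conv}(C)\subseteq R_M$ — rests on that. But this claim is false in general, and you already sensed the danger: the map $\gamma \mapsto (\rho_{\pi_1}(\gamma),\dots,\rho_{\pi_k}(\gamma))$ is a degree-$k_i$ polynomial in $\gamma$ in each coordinate, not affine (even $\rho_{12}$ is quadratic in the density $g$), so $\Phi\bigl(t\gamma_\alpha+(1-t)\gamma_\beta\bigr)\neq t\alpha+(1-t)\beta$, and "continuous image of a convex compact set" does not imply convex. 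The concrete counterexamples are in this very paper: the feasible region for $(\rho_{12},\rho_{123})$ is Razborov's scalloped triangle, whose lower boundary is a chain of concave scallops, and the feasible region for $(\rho_{123},\rho_{321})$ has a concave ``dimple.'' Neither is convex. The ``superposition/$xy$-shuffle'' device you gesture at would, if it worked, show these regions are convex — so it cannot work, and it indeed does not: block-diagonal or interleaved combinations of permutons produce pattern-density vectors that are degree-$k$ polynomial, not linear, interpolants.

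The paper's proof avoids this trap by never asserting convexity of $R_M$. It works upstairs in $\Gamma$, which \emph{is} convex: choose for each $\alpha\in C$ a permuton $\gamma_\alpha$ with $H(\gamma_\alpha)\ge -M$, let $D\subset\Gamma$ be a topological disk inside the convex hull of $\{\gamma_\alpha\}$ whose boundary is the loop $\alpha\mapsto\gamma_\alpha$, and push $D$ down by $\Phi$. Concavity of $H$ on $\Gamma$ gives $H\ge -M$ on all of $D$, so $\Phi(D)\subseteq R_M$; and $\Phi\restriction D$ is a null-homotopy of $C$, so its image must cover the region enclosed by $C$. That last topological step is the degree/winding argument you correctly identified as the crux — but it is applied to $\Phi(D)$, not to $\mathrm{conv}(C)$. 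In short: your topological input is right, but it must be attached to the image of a disk of permutons, not to the (generally non-convex) convex hull of $C$ in $\R^k$.
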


\begin{proof}
Consider the space of all permutons obtainable as convex combinations
of the entropy-maximizing permutons on $C$. This set is convex and contains a topological disk whose boundary is the
set of entropy-maximizing permutons on $C$, parameterized in order. The image of this disk
in $R$ provides a homotopy of $C$ to a point and thus contains the interior of $C$.
Since the entropy function is concave, the entropies of the points
in the space of convex combinations are all at least $-M$.
\end{proof}

We now give several corollaries of Theorem~\ref{thm:interior}.

\begin{corollary}\label{cor:extrema}
$R$ contains no local minimum of the entropy, nor any local maximum that is not a
global maximum of the entropy.
\end{corollary}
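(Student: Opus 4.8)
The plan is to prove the two assertions separately: the statement about maxima using the heat flow of Section~3.1, and the statement about minima using Theorem~\ref{thm:interior}. Write $s(\alpha)=\max_{\gamma\in\Lambda^\alpha}H(\gamma)$ for $\alpha\in R$; the maximum is attained by Theorem~\ref{thm:constrained}, and $\max_{\alpha\in R}s(\alpha)=\max_{\gamma\in\Gamma}H(\gamma)=0$, attained only at the density vector $\alpha^\star$ of the uniform permuton (since $H(\gamma)=0$ forces $\gamma$ uniform). We may assume the point under consideration lies in the interior of $R$: a boundary point realized only by singular permutons has $s=-\infty$ and is irrelevant, and the rare finite-entropy boundary point is handled by the same arguments in the topology relative to $R$.

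\emph{No non-global local maximum.} Let $\alpha_0\in R$ with $\alpha_0\neq\alpha^\star$, so $s(\alpha_0)<0$, and pick $\gamma_0\in\Lambda^{\alpha_0}$ with $H(\gamma_0)=s(\alpha_0)$; then $\gamma_0$ is not the uniform permuton. Let $\gamma_t$ ($t>0$) be its heat flow. Pattern densities are continuous in the permuton topology, so $\rho_{\pi_j}(\gamma_t)\to\alpha_{0,j}$ as $t\downarrow 0$; and the heat flow never decreases entropy and strictly increases it when the density is non-constant, the rate being the Fisher information $\int_Q|\nabla g_t|^2/g_t$, so $H(\gamma_t)>H(\gamma_0)=s(\alpha_0)$ for every $t>0$. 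Since $\gamma_t$ realizes the point $\alpha^{(t)}:=(\rho_{\pi_j}(\gamma_t))_j$, we get $s(\alpha^{(t)})\ge H(\gamma_t)>s(\alpha_0)$ with $\alpha^{(t)}\to\alpha_0$, so $\alpha_0$ is not a local maximum. Hence the only local maximum of $s$ is $\alpha^\star$, which is global. (In passing, this also shows that $s$ is nowhere locally constant, except at $\alpha^\star$.)

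\emph{No local minimum.} Suppose $\alpha_0$ were a local minimum, $v:=s(\alpha_0)$, with $s\ge v$ on a neighbourhood $U$; by the previous paragraph $s\not\equiv v$ on $U$. Consider first the typical case where $\alpha_0$ is isolated in $\{s\le v\}$: then a small round sphere $C\subset U$ about $\alpha_0$ has $\inf_C s=:v'>v$ (this mild point is discussed below). Choose $M$ with $v<-M<v'$. Every point of $C$ is realized by a permuton of entropy $\ge v'>-M$, so $C\subseteq R_M$; but $s(\alpha_0)=v<-M$, so $\alpha_0\notin R_M$. This contradicts Theorem~\ref{thm:interior}, which puts the interior of $C$, hence $\alpha_0$, inside $R_M$. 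When the level set $K:=\{s=v\}\cap U$ is positive-dimensional one argues similarly with a sphere \emph{linking} $K$ (disjoint from $K$, hence with $\inf s>v$, but bounding a disc that meets $K$), available whenever $K$ has codimension $\ge 2$.

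The main obstacle is making the minima argument airtight in the non-generic cases, and this is where the weakness of our a priori regularity on $s$ bites: $s$ is only known to be upper semicontinuous, so in principle $s$ could tend to $v$ along the chosen sphere $C$ without $\inf_C s$ strictly exceeding $v$, or $K$ could be a codimension-one ``crease'' of minima admitting no linking sphere, and in either situation Theorem~\ref{thm:interior} yields nothing. Both pathologies are ruled out once $s$ is continuous and the constrained entropy is strictly concave near $\alpha_0$, which the paper proves for the pattern family $\ast\!\ast\cdots\ast i$ and expects in general. A final, routine point is that $R_M$ is defined via analytic representability, but by applying the heat flow once more one sees that this affects $R_M$ only within its closure, which is all that the closed-set and disc-filling properties used above actually see.
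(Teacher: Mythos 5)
The two halves of your proposal deserve separate verdicts.

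For \emph{local maxima} you take a genuinely different and valid route from the paper: you flow $\gamma_0$ by the heat semigroup and use the de Bruijn identity (the Neumann boundary term vanishing, and the marginal constraint surviving because $\hat g(j,0)=\delta_{j,0}$) to get a strict entropy increase while the densities vary continuously, hence $s(\alpha^{(t)})>s(\alpha_0)$ for every small $t>0$. The paper instead forms the convex combination $\gamma_\lambda=(1-\lambda)\gamma_0+\lambda\gamma_1$ where $\gamma_1$ is a maximizer with strictly higher entropy (e.g.\ the uniform permuton); strict concavity of $H$ on permutons gives $H(\gamma_\lambda)>H(\gamma_0)$ for $\lambda\in(0,1)$ while $\rho(\gamma_\lambda)\to\alpha_0$. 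Both arguments work; yours is somewhat more intrinsic in that it never needs to name a comparison permuton.

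For \emph{local minima} the gap you flag is real, and as written your proof is incomplete. Using Theorem~\ref{thm:interior} as a black box forces the choice $-\inf_C s \le M < -s(\alpha_0)$, i.e.\ you must have $\inf_C s>s(\alpha_0)$, and upper semicontinuity of $s$ does not supply this; nor do your linking-sphere contingencies cover all cases without continuity (or concavity) of $s$, which the paper only proves for the star family. The paper sidesteps the issue by \emph{not} passing through $R_M$: it reruns the construction inside the proof of Theorem~\ref{thm:interior}. Pick a maximizer $\gamma_c$ for each $c\in C$; the degree argument produces a finite convex combination $\gamma=\sum_i\lambda_i\gamma_{c_i}$ whose density vector is $\alpha_0$. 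At least two of the $\gamma_{c_i}$ carrying positive weight must be distinct (otherwise the densities of $\gamma$ would lie on $C$ rather than in its interior), so strict concavity of $H$ on permutons yields $H(\gamma)>\min_i H(\gamma_{c_i})=\min_i s(c_i)\ge s(\alpha_0)$. Since $\gamma$ realizes $\alpha_0$ we also have $s(\alpha_0)\ge H(\gamma)$, a contradiction. This needs only $s\ge s(\alpha_0)$ on $C$, no strict separation, so the regularity obstacle you worry about never arises. Rewriting the minima half along these lines closes the gap.
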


\begin{proof}
A local minimum is the minimum in a disk around it. Let $C$ as in the proof of Theorem \ref{thm:interior}
be the boundary of this disk. Concavity of entropy
implies that the minimum of the entropy on the disk occurs on $C$, a contradiction.

For the second statement, convex combinations connect two local maxima with different values by a path
in $R$ whose entropies are lower bounded by the minimum of the two values, a contradiction.
\end{proof}

\begin{corollary}\label{cor:1-dim}
If $R$ is the feasible region for a single density, then
it is an interval on the interior of which entropy is finite and concave.
\end{corollary}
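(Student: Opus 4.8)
The plan is to deduce everything from the one-dimensional case of Theorem~\ref{thm:interior} together with Theorem~\ref{thm:Dense} and Corollary~\ref{cor:extrema}. First I would establish that $R$ is an interval. Since $R$ is the feasible region for a single continuous density functional $\rho_\pi$ on the space $\Gamma$ of permutons, and $\Gamma$ is connected (indeed convex: a convex combination of permutons is a permuton) and compact in the weak topology, $R = \rho_\pi(\Gamma)$ is a compact connected subset of $\Re$, hence a closed bounded interval $[a,b]$. (If $a=b$ the statement is vacuous, so assume $a<b$.)

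Next I would locate the finite-entropy points. By Theorem~\ref{thm:Dense}, $R_*$ is dense in $[a,b]$, so in particular $R_M$ is nonempty for $M$ large, and by upper-semicontinuity each $R_M$ is closed. The key step is to show that $R_*$ contains the entire open interval $(a,b)$. Fix any point $x_0 \in (a,b)$. Choose $M$ large enough that $R_M$ contains two points $x_1 < x_0 < x_2$, which is possible since $R_*$ is dense and $R_M \uparrow R_*$ as $M \to \infty$; here the pair $\{x_1,x_2\}$ plays the role of a ``topological sphere'' $C = S^0$ in the one-dimensional instance of Theorem~\ref{thm:interior}. That theorem (whose proof via convex combinations of the two entropy-maximizing permutons over $\{x_1,x_2\}$ works verbatim in dimension one) then gives $[x_1,x_2] \subseteq R_M \subseteq R_*$, so $x_0 \in R_*$. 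Hence $(a,b) \subseteq R_*$: every interior point of the feasible region is represented by a permuton of finite entropy, and therefore the constrained entropy $s(x)$ is finite on $(a,b)$.

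For concavity, I would argue directly from convexity of $\Gamma$ and concavity of $H$. Given $x_1, x_2 \in (a,b)$ and $\lambda \in [0,1]$, pick permutons $\gamma_1, \gamma_2$ with $\rho_\pi(\gamma_i) = x_i$ and $H(\gamma_i) = s(x_i)$ (maximizers exist by Theorem~\ref{thm:constrained}). Since $\rho_\pi$ is \emph{linear} under convex combinations of measures when $\pi \in S_2$ — and more generally the relevant point is that $\gamma_\lambda := \lambda\gamma_1 + (1-\lambda)\gamma_2$ has $\rho_\pi(\gamma_\lambda)$ equal to $\lambda x_1 + (1-\lambda)x_2$ for $\pi \in S_1, S_2$; for a single higher-degree pattern one instead uses that the curve of entropy-maximizers already furnishes, via Theorem~\ref{thm:interior}, an analytic permuton at $\lambda x_1 + (1-\lambda)x_2$ of entropy $\ge \lambda s(x_1) + (1-\lambda) s(x_2)$ by concavity of $H$ along the convex combination — one gets $s(\lambda x_1 + (1-\lambda)x_2) \ge \lambda s(x_1) + (1-\lambda)s(x_2)$. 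Thus $s$ is concave on $(a,b)$. Finiteness and concavity of $s$ on $(a,b)$ then also re-prove, in this special case, that $R_* \supseteq (a,b)$ and that there are no interior local extrema, consistent with Corollary~\ref{cor:extrema}.

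The main obstacle is the middle step: showing $(a,b) \subseteq R_*$ rather than merely $R_* $ dense. One must be careful that the ``$S^0$ version'' of Theorem~\ref{thm:interior} — sandwiching an interior point between two finite-entropy points and filling in by convex combinations — is legitimate; this is exactly where concavity of $H$ does the work, guaranteeing the interpolating permutons keep entropy bounded below by $-M$. A secondary subtlety is making precise that $\rho_\pi$ interacts well enough with convex combinations that the interpolated density is the linearly interpolated value; for a single pattern this is immediate when $k \le 2$ and otherwise is subsumed into the parameterization used in Theorem~\ref{thm:interior}, so no genuinely new estimate is needed.
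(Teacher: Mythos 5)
The paper does not supply a proof of this corollary, so there is nothing to compare against word-for-word, but your strategy — deriving everything from Theorems~\ref{thm:Dense} and \ref{thm:interior} — is clearly the intended one. Your first two steps are correct: $\Gamma$ is compact and convex and $\rho_\pi$ is continuous, so $R = \rho_\pi(\Gamma)$ is a closed bounded interval, and sandwiching an interior point $x_0$ between two points of $R_M$ and invoking the $S^0$ case of Theorem~\ref{thm:interior} shows $(a,b) \subseteq R_*$.

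The concavity step, however, has a genuine gap, and you have partly misdiagnosed where the difficulty lies. Your claim that $\rho_\pi$ is \emph{linear} under convex combinations "when $\pi \in S_2$" is false: for a pattern of length $k$, $\rho_\pi(\gamma)$ is a degree-$k$ polynomial in the measure $\gamma$ (see the explicit formula \eqref{2}, which is quadratic in $g$ already for $k=2$). So $\rho_\pi(\lambda\gamma_1 + (1-\lambda)\gamma_2)$ is \emph{not} generally $\lambda x_1 + (1-\lambda)x_2$ for $k\geq 2$. Your fallback argument also does not deliver concavity: along the segment $\mu_t = (1-t)\gamma_1 + t\gamma_2$ the function $h(t) = H(\mu_t)$ is concave and $f(t) = \rho_\pi(\mu_t)$ is a degree-$k$ polynomial with $f(0)=x_1$, $f(1)=x_2$. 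The intermediate value theorem gives some $t^*$ with $f(t^*)=x$, but there is no reason $t^*$ should equal the $\lambda$ with $x=(1-\lambda)x_1+\lambda x_2$. Concavity of $h$ therefore yields only $s(x) \geq h(t^*) \geq \min(s(x_1),s(x_2))$, i.e.\ that the upper level sets of $s$ are intervals — \emph{quasi-concavity}, which is exactly what Theorem~\ref{thm:interior} encodes — not the stronger statement that $s$ lies above the chord. To prove genuine concavity you would need a different input; the paper's Legendre-duality argument in Theorem~\ref{cov} does this for star patterns, but does not cover an arbitrary single pattern, so the gap you flagged as a "secondary subtlety" is in fact the main missing step and is not closed by the parameterization in Theorem~\ref{thm:interior}.
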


Note that for any pattern $\pi$ 
there is a permuton which has zero density for that pattern (either the identity permuton or the `anti-identity' permuton).
The \emph{maximal $\pi$-density permuton(s)}
are not known in general, although a lower bound on the maximal density is obtained from the permuton  $\gamma_\pi$.

\begin{corollary}\label{cor:plane}
If $R$ is the feasible region for two densities, then $R_M$ is
simply connected and $R$ and $R_*$ are connected.
\end{corollary}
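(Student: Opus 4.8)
The plan is to establish the three assertions first for the sets $R_M$ and then pass to the union over $M$. Write $\{\pi_1,\pi_2\}$ for the two patterns, $\rho(\gamma)=(\rho_{\pi_1}(\gamma),\rho_{\pi_2}(\gamma))\in\R^2$ for the density map (continuous in the permuton topology), and let $\Pi_M$ be the set of analytic permutons $\gamma$ with $H(\gamma)\ge -M$, so that $R_M=\rho(\Pi_M)$; as noted in the text $R_M$ is closed, it is bounded (lying in $[0,1]^2$) hence compact, and it is nonempty for $M\ge 0$ (the uniform permuton lies in $\Pi_0$).

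To see that each $R_M$ is path-connected, observe that $\Pi_M$ is \emph{convex}: a convex combination of analytic densities is analytic, and by concavity of the entropy functional $H$ the bound $H\ge -M$ survives convex combinations. Hence for $p,q\in R_M$ realized by $\gamma_p,\gamma_q\in\Pi_M$, the map $t\mapsto\rho\bigl((1-t)\gamma_p+t\gamma_q\bigr)$, $t\in[0,1]$, is a continuous path in $R_M$ from $p$ to $q$ --- this is the convex-combination device already used in the proof of Theorem~\ref{thm:interior}.

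The step I expect to be the main obstacle is showing that $R_M$ is simply connected. For an embedded loop (a Jordan curve) $C\subseteq R_M$, Theorem~\ref{thm:interior} gives that the region it bounds lies in $R_M$, so $C$ bounds a disk in $R_M$ and is null-homotopic there. For an arbitrary loop $\ell\colon S^1\to R_M$ one mimics the proof of Theorem~\ref{thm:interior}: choose for each point of $\ell$ an analytic permuton of entropy $\ge-M$ realizing it --- ideally depending continuously on the point --- cone off the resulting loop inside the convex set $\Pi_M$ (apex any fixed element of $\Pi_M$), and observe that its $\rho$-image is a singular disk restricting to $\ell$ on $S^1$, hence a null-homotopy of $\ell$ in $R_M$; together with path-connectedness this yields $\pi_1(R_M)=0$. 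The delicate point is exactly this continuous choice of realizing permutons along $\ell$ (together with pinning down the precise point-set meaning of ``simply connected'' for the compact planar set $R_M$); the relevant tools are upper semicontinuity of $H$, compactness of $\Gamma$, and the convexity of $\Pi_M$ used above.

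Finally, for connectedness of $R$ and $R_*$: since $R_M\subseteq R_{M'}$ whenever $M\le M'$, the set $A:=\bigcup_{M}R_M$ is a nested union of connected sets, hence connected. It is dense in $R$: applying the heat flow for a short time to a permuton realizing any point $\alpha\in R$ yields an analytic permuton of finite entropy --- a member of some $\Pi_M$ --- with density vector arbitrarily close to $\alpha$, which is exactly the argument proving Theorem~\ref{thm:Dense}. Thus $A\subseteq R_*\subseteq R\subseteq\overline A$, and since any set sandwiched between a connected set and its closure is connected, both $R_*$ and $R$ are connected.
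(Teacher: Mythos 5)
The paper states Corollary~\ref{cor:plane} without proof, as a consequence of Theorem~\ref{thm:interior}, and your reconstruction uses exactly the tools the paper has in play: the convexity of the set of finite-entropy analytic permutons (which underlies the proof of Theorem~\ref{thm:interior}), continuity of the density map, the heat-flow density argument from Theorem~\ref{thm:Dense}, and the sandwich criterion for connectedness. Your arguments for path-connectedness of $R_M$ and for connectedness of $R$ and $R_*$ are correct and are the intended ones.

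The gap you flag in the simple-connectivity step is genuine: knowing that every Jordan curve in $R_M$ bounds a disk in $R_M$ (Theorem~\ref{thm:interior}) does not by itself kill an arbitrary based loop, and the coning argument needs a continuous section $\ell(t)\mapsto\gamma(t)\in\Pi_M$ of $\rho$ over the loop, which is not automatic because $\rho$ is not injective and $\Pi_M$ need not even be closed (a weak limit of analytic permutons need not be analytic, so upper semicontinuity of $H$ does not by itself yield compactness of $\Pi_M$ or a measurable/continuous selection). It is worth noting, however, that the paper's own proof of Theorem~\ref{thm:interior} has the same unaddressed issue --- ``the entropy-maximizing permutons on $C$, parameterized in order'' presupposes precisely such a continuous selection over the sphere $C$ --- so your gap is shared with the source rather than introduced by you. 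If one wants a clean statement, one way out is to interpret ``simply connected'' for the compact planar set $R_M$ in the \v{C}ech/Alexander-duality sense (connected complement), which does follow from Theorem~\ref{thm:interior} plus your connectedness argument without any selection issue; upgrading to $\pi_1(R_M)=0$ in the singular sense would additionally require some local niceness of $R_M$ that the paper does not establish.
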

}

\section{Proof of Theorem~\ref{thm:constrained}}
\label{sect:proof-constrained}

Since we will be approximating $H$ by Riemann sums, it is useful to define, for any permuton $\gamma$
and any positive integer $m$, an approximating ``step-permuton'' $\gamma^m$ as follows.  Fixing $m$,
denote by $Q_{ij}$ the half-open square $((i{-}1)/m,i/m] \times ((j{-}1)/m,j/m]$; for each $1 \le i,j \le m$,
we want $\gamma^m$ to be uniform on $Q_{ij}$ with $\gamma^m(Q_{ij}) = \gamma(Q_{ij})$.  In terms of
the density $g^m$ of $\gamma^m$, we have $g^m(x,y) = m^2 \gamma(Q_{ij})$ for all $(x,y) \in Q_{ij}$.

To prove Theorem~\ref{thm:constrained} we use the following result.

\begin{proposition}\label{thm:converge}
For any permuton $\gamma$, $\lim_{m \to \infty} H(\gamma^m) = H(\gamma)$,
with $H(\gamma^m)$ diverging downward when $H(\gamma) = -\infty$.
\end{proposition}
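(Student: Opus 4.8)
The plan is to split the statement into the two regimes $H(\gamma) > -\infty$ and $H(\gamma) = -\infty$, and in both cases to recognize $H(\gamma^m)$ as a conditional-expectation (martingale) approximation to the integrand $-g\log g$, so that Jensen's inequality controls the direction of the error. First I would set up the dyadic-type filtration: let $\mathcal{F}_m$ be the $\sigma$-algebra on $Q$ generated by the squares $Q_{ij}$ at scale $m$. Then $g^m = \E[g \mid \mathcal{F}_m]$ (with respect to Lebesgue measure on $Q$), since on each $Q_{ij}$ the value $m^2\gamma(Q_{ij})$ is exactly the average of $g$ over that square. Applying the conditional Jensen inequality to the convex function $\phi(z) = z\log z$ gives $\phi(g^m) = \phi(\E[g\mid\mathcal F_m]) \le \E[\phi(g)\mid\mathcal F_m]$, hence, integrating over $Q$,
\be
H(\gamma^m) = -\int_Q \phi(g^m) \ge -\int_Q \E[\phi(g)\mid\mathcal F_m] = -\int_Q \phi(g) = H(\gamma).
\ee
So $H(\gamma^m) \ge H(\gamma)$ for every $m$, uniformly; this already handles the inequality direction and shows that if $H(\gamma) = -\infty$ there is nothing to prove except that $H(\gamma^m)\to-\infty$, which I postpone.

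Next, for the case $H(\gamma) > -\infty$, I would get the matching upper bound on $\limsup_m H(\gamma^m)$ via a convergence theorem. The cleanest route: along a refining sequence of partitions (e.g. $m = 2^\ell$), $g^{2^\ell} = \E[g\mid\mathcal F_{2^\ell}]$ is a martingale, so $g^{2^\ell} \to g$ almost everywhere and in $L^1$ by the martingale convergence theorem. Since $\phi(z) = z\log z$ is bounded below (by $-1/e$), Fatou's lemma applied to $\phi(g^{2^\ell}) + 1/e \ge 0$ gives $\int_Q \phi(g) \le \liminf_\ell \int_Q \phi(g^{2^\ell})$, i.e. $\limsup_\ell H(\gamma^{2^\ell}) \le H(\gamma)$. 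Combined with the Jensen lower bound, $H(\gamma^{2^\ell}) \to H(\gamma)$ along powers of $2$. To upgrade from the subsequence $2^\ell$ to all $m$, I would note that the partitions at scale $m$ are not nested in general, so the martingale argument does not directly apply; instead I would observe that for $2^\ell \le m < 2^{\ell+1}$ one can still squeeze: the partition at scale $m$ refines the one at scale... actually it does not, so the honest fix is to work throughout with a general refining sequence and then argue separately that the full sequence $H(\gamma^m)$ converges. A safe way: show $g^m \to g$ in $L^1(Q)$ as $m \to \infty$ (not just along powers of $2$) — this is standard, since step functions are dense in $L^1$ and $\|g^m - g\|_1 \le 2\,\|g - h\|_1$ for any $\mathcal F_m$-measurable $h$; then combined with the uniform lower bound $\phi \ge -1/e$ and Fatou one gets $\limsup_m H(\gamma^m) \le H(\gamma)$ for the full sequence.

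For the case $H(\gamma) = -\infty$: here $g$ has a density but $\int_Q \phi(g) = +\infty$. I want $H(\gamma^m) = -\int_Q\phi(g^m) \to -\infty$. Suppose not; then along some subsequence $H(\gamma^{m_j}) \ge -K$, i.e. $\int_Q \phi(g^{m_j}) \le K$. Since $\phi$ has superlinear growth, this gives a uniform integrability / tightness bound on the family $\{g^{m_j}\}$ (de la Vallée–Poussin). Passing to a further subsequence, $g^{m_j}$ converges weakly in $L^1$; but we also know $g^{m_j} \to g$ in $L^1$ (from the approximation argument above, which does not use finiteness of entropy), so the weak-$L^1$ limit is $g$. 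Lower semicontinuity of $\gamma \mapsto \int_Q \phi(g)$ under weak-$L^1$ convergence (a consequence of convexity of $\phi$ and Mazur's lemma, or directly: the entropy functional is weakly lower semicontinuous) then forces $\int_Q \phi(g) \le \liminf_j \int_Q \phi(g^{m_j}) \le K < \infty$, contradicting $H(\gamma) = -\infty$. Hence $H(\gamma^m) \to -\infty$.

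I expect the main obstacle to be the bookkeeping around \emph{all} $m$ versus a refining subsequence: the clean martingale/Jensen picture works verbatim only along nested partitions, so the argument has to be organized either around a single refining sequence plus a separate $L^1$-density argument for the full sequence, or around proving $g^m \to g$ in $L^1$ as $m\to\infty$ directly (which is elementary but must be stated) and then feeding that into Fatou. The Jensen lower bound $H(\gamma^m) \ge H(\gamma)$ is painless and universal; all the work is in the upper bound / divergence, and there the only subtlety is making the lower semicontinuity step rigorous, for which invoking the uniform bound $z\log z \ge -1/e$ together with Fatou is the most economical device and sidesteps any delicate weak-compactness discussion in the finite-entropy case.
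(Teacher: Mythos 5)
Your approach is genuinely different from the paper's and, for the absolutely continuous case, arguably cleaner. The paper proves $R_m(\gamma)\to\int_Q g\log g$ by direct Riemann-sum estimates: a Tietze-extension argument to control the set where $g$ is large, a truncation $g^t$ to reduce the non-integrable case to the integrable one, and a separate covering argument (Lemma~\ref{lemma:cover}) for singular $\gamma$. You instead recognize $g^m=\E[g\mid\mathcal F_m]$, so conditional Jensen gives $H(\gamma^m)\ge H(\gamma)$ in one line, and $L^1$-convergence of $g^m$ to $g$ (via density of continuous functions and the $L^1$-contraction property of conditional expectation) plus Fatou along an a.e.-convergent subsequence gives the other direction. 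That is sound, and your worry about non-nested partitions is exactly right and correctly resolved. One small note: in the non-integrable case the de la Vall\'ee--Poussin / weak-$L^1$ machinery is superfluous; once $g^m\to g$ in $L^1$, Fatou along an a.e.-convergent subsequence already gives $\int_Q\phi(g)\le\liminf_j\int_Q\phi(g^{m_j})\le K$, a contradiction, so the same device handles both absolutely continuous regimes.

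The genuine gap is the singular case, which your argument never addresses even though it is included in the statement (the paper sets $H(\gamma)=-\infty$ when $\gamma$ has no density). There your whole framework fails at the first step: there is no $g$ with $g^m=\E[g\mid\mathcal F_m]$, and the fact you rely on --- $g^m\to g$ in $L^1$ --- is false, since $\int_Q g^m=\gamma(Q)=1$ for every $m$ while the density of the absolutely continuous part has $L^1$ norm strictly less than $1$; the singular mass has no $L^1$ limit to converge to, so Fatou cannot be invoked. Your uniform-integrability idea could be salvaged into a correct argument here (a bound $\int_Q\phi(g^{m_j})\le K$ forces uniform integrability of $\{g^{m_j}\}$, which is incompatible with a positive amount of $\gamma$-mass concentrating on a Lebesgue-null set), but that is a different argument from what you wrote and would need to be spelled out; the paper instead proves it directly from a covering estimate and convexity of $z\log z$.
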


In what follows we will, in order to increase readability, write
\be
\int_0^1 \int_0^1  -g(x,y) \log g(x,y) \dd{}x \dd{}y
\ee
as just $\int_Q -g \log g$.  Also for the sake of readability, we will for this section only state results in terms
of $g \log g$ rather than $-g \log g$; this avoids clutter caused by
a multitude of absolute values and negations.  Eventually, however, we will need to deal with
an entropy function $H(\gamma) = \int_Q -g \log g$ that takes values in $[-\infty,0]$.

Define
\be
g_{ij} = m^2\gamma(Q_{ij}).
\ee
We wish to show that the Riemann sum
\be
\frac{1}{m^2}\sum_{0 \le i,j \le m} g_{ij} \log g_{ij},
\ee
which we denote by $R_m(\gamma)$, approaches $\int_Q g \log g$ when $\gamma$ is absolutely continuous with respect to Lebesgue measure,
i.e., when the density $g$ exists a.e., and otherwise diverges to $\infty$. There are
thus three cases:
\begin{enumerate}
\item $g$ exists and $\int_Q g \log g < \infty$;
\item $g$ exists but $g \log g$ is not integrable, i.e., its integral is $\infty$;
\item $\gamma$ is singular.
\end{enumerate}

Let $A(t) = \{(x,y)\in Q:~g(x,y) \log g(x,y) > t\}$.

In the first case, we have that $\lim\sup \int_{A(t)} g \log g = 0$,  and since $g\log g\ge t$ on $A(t)$, we have
$\lim\sup |A(t)|t = 0$ where $|A|$ denotes
the Lebesgue measure of $A \subset Q$.  (We
need not concern ourselves with large negative values, since the function $x \log x$ is bounded below by $-1/e$.)

In the second case, we have the opposite, i.e., for some $\eps>0$ and any $s$ there is a $t > s$
with $t|A(t)| > \eps$.

In the third case, we have a set $A \subset Q$ with $\gamma(A)>0$ but $|A|=0$.

In the proof that follows we do not use the fact that $\gamma$ has uniform marginals, or that it is normalized to have $\gamma(Q)=1$.
Thus we restate Proposition~\ref{thm:converge} in greater generality:

\begin{proposition}\label{thm:limits}
Let $\gamma$ be a finite measure on $Q = [0,1]^2$ and $R_m=R_m(\gamma)$. Then:
\begin{enumerate}
\item If $\gamma$ is absolutely continuous with density $g$, and $g \log g$ is integrable, then $\lim_{m \to \infty} R_m = \int_Q g \log g$.
\item If $\gamma$ is absolutely continuous with density $g$, and $g \log g$ is not integrable, then $\lim_{m \to \infty} R_m = \infty$.
\item If $\gamma$ is singular, then $\lim_{m \to \infty} R_m = \infty$.
\end{enumerate}
\end{proposition}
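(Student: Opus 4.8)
\emph{Proof proposal.} The plan is to extract two inequalities valid for \emph{every} finite measure $\gamma$ and every $m$, and then read off all three assertions. On the upper side, apply Jensen's inequality to the convex function $\phi(z)=z\log z$ cell by cell: when $\gamma$ has a density $g$, the value $g_{ij}=m^2\gamma(Q_{ij})$ is the average of $g$ over $Q_{ij}$, so $\tfrac1{m^2}\phi(g_{ij})=|Q_{ij}|\,\phi\!\big(\tfrac1{|Q_{ij}|}\int_{Q_{ij}}g\big)\le\int_{Q_{ij}}\phi(g)$, and summing gives
\[
R_m(\gamma)\;\le\;\int_Q g\log g\qquad\text{for every }m
\]
(trivially true when the right side is $+\infty$). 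On the lower side I would use the Fenchel--Young inequality $z\log z=\sup_{t\in\R}\,(tz-e^{t-1})$: for any bounded measurable $t(x,y)$, using $R_m(\gamma)=\int_Q\phi(g^m)$ with $g^m$ the density of the step-permuton $\gamma^m$,
\[
R_m(\gamma)\;\ge\;\int_Q t\,g^m-\int_Q e^{t-1}.
\]

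Now $g^m$ is the conditional expectation of $g$ on the $\sigma$-algebra generated by the grid $\{Q_{ij}\}$, and since $\gamma$ is finite we have $g\in L^1(Q)$; a standard argument (approximate $g$ in $L^1$ by a continuous function and use that conditional expectation is an $L^1$-contraction) gives $g^m\to g$ in $L^1$ along the \emph{full} sequence $m\to\infty$. With $t$ bounded this lets me pass to the limit: $\liminf_m R_m(\gamma)\ge\int_Q t\,g-\int_Q e^{t-1}$. Taking $t=t_T:=\max(\min(1+\log g,\,T),\,-T)$ and letting $T\to\infty$, the integrand $t_T g-e^{t_T-1}$ increases pointwise to $g\log g$ while staying $\ge-1/e$, so monotone convergence yields $\liminf_m R_m(\gamma)\ge\int_Q g\log g$ (the right side possibly $+\infty$). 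Combined with the Jensen bound this proves statement~1; and since the right side is $+\infty$ exactly in the non-integrable case, statement~2 follows as well.

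For statement~3 suppose $\gamma$ has nonzero singular part, so by inner regularity there is a compact $K\subset Q$ with $|K|=0$ and $\gamma(K)=:\delta>0$. Fix $\eta\in(0,\delta)$. By outer regularity of Lebesgue measure choose an open $U\supseteq K$ with $|U|<\eta$; since $K$ is compact and disjoint from the closed set $Q\setminus U$, $\rho:=\mathrm{dist}(K,Q\setminus U)>0$, and once $\sqrt2/m<\rho$ every grid cell meeting $K$ lies inside $U$. Let $S_m$ collect those cells; then $|S_m|\le m^2|U|<m^2\eta$ while $\gamma\big(\bigcup_{S_m}Q_{ij}\big)\ge\gamma(K)=\delta$. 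Applying the finite Jensen inequality to the $|S_m|$ numbers $m^2\gamma(Q_{ij})$, $(i,j)\in S_m$, and bounding $\phi\ge-1/e$ on the other cells,
\[
R_m(\gamma)\;\ge\;\frac{|S_m|}{m^2}\,\phi\!\left(\frac{m^2}{|S_m|}\gamma\Big(\bigcup_{S_m}Q_{ij}\Big)\right)-\frac1e\;\ge\;\delta\log\frac\delta\eta-\frac1e
\]
for all large $m$. Letting $\eta\downarrow0$ forces $\liminf_m R_m(\gamma)=+\infty$. (One can first reduce to purely singular $\gamma$ via superadditivity of $\phi$ on $[0,\infty)$, i.e. $R_m(\gamma)\ge R_m(\gamma_{ac})+R_m(\gamma_{s})\ge-1/e+R_m(\gamma_s)$, but this is not needed.)

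The main obstacle is statement~3. What makes it nontrivial is not atomic or ``thinly supported'' singular measures but genuinely spread-out ones — e.g.\ countably many atoms along a dense set — where no single grid isolates the mass; the regularity/compactness localization above is exactly the device that turns ``concentrated on a Lebesgue-null set'' into ``concentrated on a small Lebesgue-measure union of cells'' at each scale $m$. By contrast the absolutely continuous cases are routine once the Fenchel--Young inequality is available: it yields a lower bound valid along the full sequence $m$, sidestepping the subsequence complications of a bare dyadic-martingale/Fatou argument, and it treats the finite- and infinite-integral cases uniformly.
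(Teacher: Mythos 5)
Your proposal is correct, and it takes a genuinely different route from the paper's for the absolutely continuous cases. For case~1 the paper approximates $g$ by a continuous function via the Tietze extension theorem and performs a careful good/bad cell count (Lemma~\ref{lemma:s} and the estimates following it); you instead combine a cell-by-cell Jensen upper bound, $R_m(\gamma)\le\int_Q g\log g$, with a lower bound from Fenchel--Young duality, $R_m(\gamma)=\int_Q\phi(g^m)\ge\int_Q t\,g^m-\int_Q e^{t-1}$, together with $L^1$-convergence $g^m\to g$ obtained by approximating $g$ in $L^1$ by a continuous function and using the $L^1$-contractivity of conditional expectation. This last point is subtle and you are right to stress it: the grid $\sigma$-algebras for different $m$ are \emph{not} nested, so $\{g^m\}$ is not a martingale, and the contraction-plus-density-of-continuous-functions argument is exactly what rescues convergence along the full sequence rather than a dyadic subsequence. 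Your duality lower bound also handles cases~1 and~2 uniformly, whereas the paper treats case~2 separately by a truncation $g^t$ and a reduction to case~1. For case~3 the two proofs are essentially the same idea (cover the null carrier set by grid cells of small total area, then apply Jensen to the cell averages); the paper does the covering by controlled-radius disks in Lemma~\ref{lemma:cover}, whereas you invoke inner/outer regularity and the positive distance from the compact $K$ to the complement of a thin open neighborhood, which is a bit slicker. The only things worth double-checking on a careful write-up are (a) that $t_T g-e^{t_T-1}$ is indeed bounded below by $-1/e$ for $T\ge1$ (true: on the clamped regions one checks $(T{-}1)e^{T-1}\ge0$ for $z\ge e^{T-1}$ and $-(T{+}1)e^{-T-1}\ge-1/e$ for $z\le e^{-T-1}$), and (b) superadditivity of $z\log z$ on $[0,\infty)$ for the optional reduction to the purely singular part; both are fine. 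All told this is a cleaner, more conceptual proof of Proposition~\ref{thm:limits} that would serve equally well.
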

\begin{proof}
We begin with the first case, where we need to show that for any $\eps > 0$,
there is an $m_0$ such that for $m \ge m_0$,
\be\label{eq:*}
\int_Q g \log g - \frac{1}{m^2}\sum_{i,j=0}^m g_{ij} \log g_{ij} < \eps~.
\ee
Note that since $x \log x$ is convex, the quantity on the left cannot be negative.

\begin{lemma}\label{lemma:s}
Let $\eps > 0$ be fixed and small.  Then there are $\delta>0$ and $s$ with the following properties:
\begin{enumerate}
\item $\lvert\int_{A(s)} g \log g\rvert < \delta^2/4$;
\item $|A(s)| < \delta^2/4$;  
\item for any $u,v \in [0,s+1]$, if $|u-v| < \delta$ then $|u \log u - v \log v| < \eps/4$;
\item for any $B \subset Q$, if $|B|< 2\delta$ then $\int_B |g \log g| < \eps/4$.
\end{enumerate}
\end{lemma}

\begin{proof}
By Lebesgue integrability of $g \log g$, we can immediately choose $\delta_0$ such that any $\delta<\delta_0$ will satisfy the fourth property.

We now choose $s_1$ so that $\int_{A(s_1)} g \log g < \delta_0^2/4$, and $t|A(t)| < 1$ for all $t \ge s_1$.
Since $[0,s_1]$ is compact we may choose $\delta_1 < \delta_0$ such that for any $u,v \in [0,s_1+1]$, $|u-v|<\delta_1$
implies $|u \log u - v \log v| < \eps/4$.  We are done if $|A(s_1)| < \delta_1^2/4$ but since $\delta_1$ depends on
$s_1$, it is not immediately clear that we can achieve this.  However, we know that since $\frac{d}{du} u \log u = 1 + \log u$,
the dependence of $\delta_1$ on $s_1$ is only logarithmic, while $|A(s_1)|$ descends at least as fast as $1/s_1$.  

So we take $k = \lceil \log (\delta/2) / \log (\eps/2) \rceil$ and let $\delta = \delta_1/k$, $s = s_1^k$.
Then $u,v \in [0,s+1]$ and $|u-v|<\delta$ implies $|u \log u - v \log v| < \eps/4$, and
\be
\int_{A(s)}g \log g \le \left(\int_{A(s_1)}g \log g\right)^k < (\eps/2)^{2\log (\delta/2) / \log (\eps/2)} = \delta^2/4
\ee
as desired.  Since $u \log u > u > 1$ for $u>e$, we get $|A(s)| < \delta^2/4$. 
\end{proof}

Henceforth $s$ and $\delta$ will be fixed, satisfying the conditions of Lemma~\ref{lemma:s}.  
Since $g$ is measurable we can find a subset $C \subset Q$ with $|C| = |A(s)| < \delta^2/4$ such that
$g$, and thus also $g \log g$, is continuous on $Q \setminus C$.  Since $\int_B g \log g$ is maximized
by $B=A(s)$ for sets $B$ with $B=|A(s)|$, $|\int_C g \log g| < \int_{A(s)} g \log g$,
so $|\int_{A(s) \cup C} g \log g| < \delta^2/2$.  We can then find an open set $A$ containing $A(s) \cup C$
with $|A|$ and $\int_A g \log g$ both bounded by $\delta^2$.

We now invoke the Tietze Extension Theorem to choose a continuous $f:~Q \to {\mathbb R}$ with
$f(x,y) = g(x,y)$ on $Q \setminus A$, and $f \log f < s$ on all of $Q$.
Since $f$ is continuous and bounded, $f$ and $f \log f$ are Riemann integrable.
Let $f_{ij}$ be the mean value of $f$ over $Q_{ij}$, i.e.,
\be
f_{ij} = m^2 \int_{Q_{ij}}f~.
\ee
Since, on any $Q_{ij}$, $\inf f \log f \le f_{ij} \log f_{ij} < \sup f \log f$,
we can choose $m_0$ such that $m \ge m_0$ implies
\be\label{eq:1}
\left|\int_Q f \log f - \frac{1}{m^2}\sum_{ij} f_{ij} \log f_{ij}\right| < \eps/4~.
\ee
We already have
\begin{eqnarray}\label{eq:2} |\int_Q g \log g&-&\int_Q f \log f | = |\int_A g \log g - \int_A f \log f |\cr
&\le& |\int_A g \log g| - |\int_A f \log f| < 2\delta^2 \ll \eps/4.
\end{eqnarray}
Thus, to get (\ref{eq:*}) from (\ref{eq:1}) and (\ref{eq:2}), it suffices to bound 
\be \left|\frac{1}{m^2}\sum_{ij} g_{ij} \log g_{ij} - \frac{1}{m^2}\sum_{ij} f_{ij} \log f_{ij}\right| \ee
by $\varepsilon/2$.

Fixing $m > m_0$, call the pair $(i,j)$, and its corresponding square $Q_{ij}$, ``good'' if $|Q_{ij}\cap A| < \delta/(2m^2)$.
The number of bad (i.e., non-good) squares cannot exceed $2 \delta m^2$, else $|A| > 2\delta m^2 \delta/(2m^2) = \delta^2$.

For the good squares, we have
\be
\left|g_{ij} - f_{ij}\right| = m^2\left|\int_{Q_{ij} \cap A} (g - f) \right| \le m^2\left|\int_{Q_{ij} \cap A} 2g\right| \le 2(\delta/2) = \delta
\ee
with $f_{ij} \le s$, thus $f_{ij}$ and $g_{ij}$ both in $[0,s+1]$.  It follows that
\be \left|g_{ij}\log g_{ij} - f_{ij}\log f_{ij}\right| < \eps/4\ee
 and therefore the ``good'' part of the Riemann sum discrepancy, namely
\be
\frac{1}{m^2}\left|\sum_{{\rm good}~ij} \left( g_{ij} \log g_{ij} - f_{ij} \log f_{ij} \right)\right|~,
\ee
is itself bounded by $\eps/4$.

Let $Q'$ be the union of the bad squares, so $|Q'| < m^2 2\delta/(2m^2) = 2\delta$; then
by (\ref{eq:1}) and convexity of $u \log u$,
\be
\frac{1}{m^2}\left|\sum_{{\rm bad}~ij} g_{ij} \log g_{ij} - f_{ij} \log f_{ij}\right| < 2\left|\int_{Q'} g \log g\right| < 2(\eps/8) = \eps/4
\ee
and we are done with the case where $g \log g$ is integrable.

\medskip

Suppose $g$ exists but $g \log g$ is not integrable; we wish to show that for any $M$, there is an $m_1$ such that $m \ge m_1$ implies
$\frac{1}{m^2}\sum g_{ij} \log g_{ij} > M$.

For $t \ge 1$, define the function $g^t$ by $g^t(x,y) = g(x,y)$ when $g(x,y) \le t$, i.e.\ when $(x,y) \not\in A(t)$, otherwise $g^t(x,y) = 0$.
Then $\int_Q g^t \log g^t \to \infty$ as $t \to \infty$, so we may take $t$ so that $\int_Q g^t \log g^t \ge M+1$.  Let $\gamma^t$ be
the (finite) measure on $Q$ for which $g^t$ is the density.  Since $g^t$ is bounded (by $t$), $g^t \log g^t$ is integrable and we may apply the
first part of Proposition~\ref{thm:limits} to get an $m_1$ so that $m \ge m_1$ implies that $R^m(\gamma^t) > M$.  

Since $t \ge 1$, $g\log g \ge g^t \log g^t$ everywhere and hence, for every $m$, $R_m(\gamma^t) \le R_m(\gamma)$.  It follows that
$R_m(\gamma) > M$ for $m \ge m_1$ and this case is done.

\medskip

Finally, suppose $\gamma$ is singular and let $A$ be a set of Lebesgue measure zero for which $\gamma(A) = a > 0$.

\begin{lemma}\label{lemma:cover}  For any $\eps>0$ there is an $m_2$ such that $m>m_2$ implies that there are $\eps m^2$ squares of the $m \times m$ grid that
cover at least half the $\gamma$-measure of $A$.
\end{lemma}

\begin{proof}
Note first that if $B$ is an open disk in $Q$ of radius at most $\delta$, then for $m > 1/(2\delta)$, then we can cover
$B$ with cells of an $m \times m$ grid of total area at most $64 \delta^2$.  The reason is that such a disk cannot
contain more than $\lceil 2 \delta / (1/m) \rceil ^2 < (4 \delta m)^2$ grid vertices, each of which can be a corner
of at most four cells that intersect the disk.  Thus, rather conservatively, the total area of the cells that intersect
the disk is bounded by $(4/m^2) \cdot (4 \delta m)^2 = 64 \delta^2$.  It follows that as long as a disk has radius
at least $1/(2m)$, it costs at most a factor of $64/\pi$ to cover it with grid cells.

Now cover $A$ with open disks of area summing to at most $\pi \eps/64$.  Let $b_n$ be the $\gamma$-measures of the union of the disks
of radii at least $1/2n$. Choose $m_2$ such that $b_{m_2}>a/2$ to get the desired result.
\end{proof}

Let $M$ be given and use Lemma~\ref{lemma:cover} to find $m_2$ such that for any $m \ge m_2$, there is a set $I \subset \{1,\dots,m\}^2$
of size at most $\delta m^2$ such that $\gamma(\bigcup_I Q_{ij}) > a/2$, where $Q_{ij} = ((i{-}1)/m,i/m] \times ((j{-}1)/m,j/m]$
as before and $\delta$ is a small positive quantity depending on $M$ and $a$, to be specified later.  Then
\begin{eqnarray}
R_m(\gamma) & = & \sum_{ij} \frac{1}{m^2} g_{ij} \log g_{ij} \cr
& \ge & -1/e + \frac{1}{m^2} \delta m^2 {\bar g} \log {\bar g} = -1/e + \delta {\bar g} \log {\bar g}
\end{eqnarray}
where $\bar g$ is the mean value of $g_{ij}$ over $(i,j) \in I$, the last inequality following from the convexity of $u \log u$.
The $-1/e$ term is needed to account for possible negative values of $g \log g$.

But $\sum_I g_{ij} = m^2\gamma(\bigcup_I Q_{ij}) > m^2 a/2$, so ${\bar g} > (m^2 a/2)/(\delta m^2) = a/(2\delta)$.
Consequently
\be
R_m(\gamma) > -\frac{1}e + \delta \frac{a}{2\delta} \log \frac{a}{2\delta} = -\frac1e + \frac{a}{2} \log \frac{a}{2\delta} ~.
\ee
Taking
\be
\delta = \frac{a}{2} \exp\left( -2\frac{\left(M + \frac1e\right)}{a} \right)
\ee
gives $R_m(\gamma) > M$ as required, and the proof of Proposition~\ref{thm:converge} is complete.
\end{proof}

We now prove Theorem~\ref{thm:constrained}.

\begin{proof}
The set $\Lambda^{\alpha,\varepsilon}$ of permutons under consideration consists of those for which certain pattern densities
are close to values in the vector $\alpha$.  Note first that since the density function $\rho(\pi,\cdot)$ is continuous in the topology of $\Gamma$,
$\Lambda^{\alpha}$ is closed and by compactness $H(\gamma)$ takes a maximum value on $\Lambda^{\alpha}$.

Again by continuity of $\rho(\pi,\cdot)$, $\Lambda^{\alpha,\eps}$
is an open set and we have from the second statement of Theorem~\ref{thm:tras} that for any $\varepsilon$,
\be
\lim_{n\to\infty}\frac1n \log \frac{|\Lambda_n^{\alpha,\varepsilon}|}{n!} \ge \max_{\gamma \in \Lambda^{\alpha,\varepsilon}} H(\gamma) \ge \max_{\gamma \in \Lambda^{\alpha}} H(\gamma)
\ee
from which we deduce that 
\be
\lim_{\varepsilon \downarrow 0}\lim_{n\to\infty}\frac1n \log \frac{|\Lambda_n^{\alpha,\varepsilon}|}{n!} \ge \max_{\gamma \in \Lambda^{\alpha}} H(\gamma).
\ee

To get the reverse inequality, fix a $\gamma \in \Lambda^{\alpha}$ maximizing $H(\gamma)$.  Let $\delta>0$; since $H$ is upper semi-continuous and
$\Lambda^{\alpha}$ is closed, we can find an $\varepsilon'>0$ such that no permuton $\gamma'$ within distance $\varepsilon'$ of $\Lambda^{\alpha}$ has $H(\gamma') > H(\gamma)+\delta$.
But again since $\rho(\pi,\cdot)$ is continuous, for small enough $\varepsilon$, every $\gamma' \in \Lambda^{\alpha,\varepsilon}$ is indeed within distance $\varepsilon'$ of $\Lambda^{\alpha}$.
Let $\Lambda'$ be the (closed) set of permutons $\gamma'$ satisfying $\rho(\pi_j,\gamma') \le \varepsilon$; then, using the first statement of Theorem~\ref{thm:tras}, we have
thus
\be
\lim_{n\to\infty}\frac1n \log \frac{|\Lambda'_n|}{n!} \le H(\gamma) + \delta
\ee
and since such a statement holds for arbitrary $\delta > 0$, the
result follows. \end{proof}

\section{Insertion measures}

A permuton $\gamma$ can be described by a family of \emph{insertion measures}. 
This description
will be useful for constructing concrete examples, in particular for the so-called star models, 
which are discussed in Sections
\ref{inversionsection} and 
\ref{starsection} below. 

The insertion measures are a family
of probability measures $\{\nu_x\}_{x\in[0,1]}$, with measure $\nu_x$ supported on $[0,x]$.
This family is a continuum version of the process of building a random permutation on $[n]$ by, for each $i$, inserting $i$ at a random location in the permutation
formed from $\{1,\dots,i-1\}$. Any permutation measure can be built this way. 
We describe here how any permuton can be built from a family of \emph{independent}
insertion measures, and conversely,
how every permuton defines a unique family of independent insertion measures.

We first describe how to reconstruct the insertion measures from the permuton $\gamma$.
Let $Y_x\in[0,1]$ be the random variable with law $\gamma|_{\{x\}\times[0,1]}$.
Let $Z_x\in[0,x]$ be the random variable (with law $\nu_x$) giving the location of the insertion of $x$ (at time $x$),
and let $F(x,\cdot)$ be its CDF.
Then 
\be
F(x,y) = \Pr(Z_x<y)=\Pr(Y_x<\tilde y)=G_x(x,\tilde y)
\ee
where $\tilde y$ is defined by $G(x,\tilde y)=y.$

More succinctly, we have 
\be \label{FfromG}F(x,G(x,\tilde y))=G_x(x,\tilde y).\ee

Conversely, given the insertion measures, equation (\ref{FfromG}) is a differential equation for $G$.
Concretely, after we insert $x_0$ at location $X(x_0) = Z_{x_0}$, the image flows under future insertions
according to the (deterministic) evolution
\be\label{VF} \frac{\dd}{\dd{}x}X(x) = F_x(X(x)),~~~X(x_0)=Z_{x_0}.\ee
If we let $\Psi_{[x,1]}$ denote the flow up until time $1$, then the permuton is the push-forward under $\Psi$ of $\nu_x$:
\be \gamma_t= (\Psi_{[x,1]})_*(\nu_x).\ee

A more geometric way to see this correspondence is as follows. 
Project the graph of $G$ in $\R^3$ onto the $xz$-plane; the image
of the curves $G([0,1]\times\{\tilde y\})$ are the flow lines of the vector field (\ref{VF}). The divergence of the flow lines
at $(x,y)$ is $f(x,y)$, the density associated with $F(x,y)$. 

The permuton entropy can be computed from the entropy of the insertion measures as follows.
\begin{lemma}
\be\label{fent}H(\gamma) = \int_0^1\int_0^x-f(x,y)\log(xf(x,y))\dd{}y\,\dd{}x.\ee
\end{lemma}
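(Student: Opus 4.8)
The plan is to transport the entropy integral to ``insertion coordinates'' via the substitution $y=G(x,\tilde y)$ supplied by~(\ref{FfromG}), reducing~(\ref{fent}) to two elementary one-dimensional integrals, each of which uses one of the two uniform marginals of $\gamma$. First I would differentiate the relation $F(x,G(x,\tilde y))=G_x(x,\tilde y)$ with respect to $\tilde y$. Setting $\bar G(x,\tilde y):=\partial_{\tilde y}G(x,\tilde y)=\int_0^x g(u,\tilde y)\,\dd{}u$ and noting $\partial_{\tilde y}G_x(x,\tilde y)=g(x,\tilde y)$, this gives $f\bigl(x,G(x,\tilde y)\bigr)\,\bar G(x,\tilde y)=g(x,\tilde y)$; in words, the insertion density $f(x,\cdot)$ is obtained from $g(x,\cdot)$ by dividing by $\bar G(x,\cdot)$ and reparametrizing by $G(x,\cdot)$. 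Substituting $y=G(x,\tilde y)$, for which $\dd{}y=\bar G(x,\tilde y)\,\dd{}\tilde y$ and $y$ runs over $[0,x]$ as $\tilde y$ runs over $[0,1]$, the inner integral on the right of~(\ref{fent}) becomes $\int_0^1 -g(x,\tilde y)\log\bigl(x\,g(x,\tilde y)/\bar G(x,\tilde y)\bigr)\,\dd{}\tilde y$, so the whole right-hand side of~(\ref{fent}) equals $H(\gamma)+\int_Q g\,\log(\bar G/x)$.

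It then remains to show $\int_Q g\,\log(\bar G/x)=0$, which I would split as $\int_Q g\log\bar G-\int_Q g\log x$. For the second term, Fubini and $\int_0^1 g(x,\tilde y)\,\dd{}\tilde y=1$ (uniform first marginal) give $\int_Q g\log x=\int_0^1\log x\,\dd{}x=-1$. For the first term, fix $\tilde y$ and substitute $u=\bar G(x,\tilde y)$: the function $x\mapsto\bar G(x,\tilde y)$ is absolutely continuous and nondecreasing with $\partial_x\bar G=g$, $\bar G(0,\tilde y)=0$, and $\bar G(1,\tilde y)=\int_0^1 g(u,\tilde y)\,\dd{}u=1$ (uniform second marginal), whence $\int_0^1 g(x,\tilde y)\log\bar G(x,\tilde y)\,\dd{}x=\int_0^1\log u\,\dd{}u=-1$ for every $\tilde y$, so $\int_Q g\log\bar G=-1$. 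Thus $\int_Q g\,\log(\bar G/x)=-1-(-1)=0$, which proves~(\ref{fent}) for absolutely continuous $\gamma$ — including the case $H(\gamma)=-\infty$, since then $\int_Q -g\log g=+\infty$ is the only non-finite term while the remaining integrals are finite and cancel. For singular $\gamma$ one has $H(\gamma)=-\infty$ by definition, and since $\gamma$ is recovered as a push-forward of its insertion-measure family, $\nu_x$ must be singular for $x$ in a set of positive measure, so the right side of~(\ref{fent}) is $-\infty$ as well.

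The computation is short; the real work is measure-theoretic bookkeeping. One must justify differentiating~(\ref{FfromG}) and performing the two substitutions when $g$ is only an $L^1$ density that may vanish on sets of positive measure — so that $G(x,\cdot)$ need not be strictly increasing and $\bar G$ may vanish; I would handle this by discarding the null set $\{g=0\}$ (which contributes $0$ to each side under the convention $0\log 0=0$) and by using the push-forward form of the change of variables — the measure $\bar G(x,\tilde y)\,\dd{}\tilde y$ pushes forward under $G(x,\cdot)$ to Lebesgue measure on $[0,x]$ for a.e.\ $x$ — rather than pointwise inverses; the singular case must be pinned down with similar care. Finally, the cancellation has a clean conceptual source in the flow~(\ref{VF}): its integral curves are exactly the curves $x\mapsto(x,G(x,\tilde y))$ (this is precisely what~(\ref{FfromG}) asserts), the derivative of $\Psi_{[x,1]}$ along such a curve equals $\exp\int_x^1 f(s,\cdot)\,\dd{}s$ by Liouville's formula, and the time-$s$ configuration $\int_0^s(\Psi_{[x,s]})_*\nu_x\,\dd{}x$ is Lebesgue measure on $[0,s]$; feeding these into the equivalent identity $H(\gamma)=-\int_0^1\!\int_0^x f\log f+\int_0^1\!\int_0^x f\log\Psi'_{[x,1]}$ (obtained from the same substitution, written through the flow) and using Fubini yields $\int_0^1\!\int_0^x f\log\Psi'_{[x,1]}=\int_0^1\!\int_0^s f(s,w)\,\dd{}w\,\dd{}s=1=\int_0^1-\log x\,\dd{}x$, which is the same cancellation.
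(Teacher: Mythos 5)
Your proof is correct and follows essentially the same route as the paper's: differentiate (\ref{FfromG}) in $\tilde y$ to get $f(x,G(x,\tilde y))G_y(x,\tilde y)=g(x,\tilde y)$, substitute $y=G(x,\tilde y)$, split the logarithm, and observe that the two auxiliary integrals each equal $-1$ and cancel (your $u=\bar G$ substitution is precisely the paper's antiderivative $\partial_x(G_y\log G_y-G_y)$ in disguise). The added remarks on measure-theoretic bookkeeping and the singular case go beyond what the paper records, but do not change the argument.
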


\begin{proof}Differentiating (\ref{FfromG}) with respect to $\tilde y$ gives 
\be f(x,G(x,\tilde y))G_y(x,\tilde y)=g(x,\tilde y).\ee
Thus the RHS of (\ref{fent}) becomes
\be \int_0^1\int_0^x-\frac{g(x,\tilde y)}{G_y(x,\tilde y)}\log\frac{x g(x,\tilde y)}{G_y(x,\tilde y)}\,\dd{}y\,\dd{}x.\ee
Substituting $y=G(x,\tilde y)$ with $\dd{}y = G_y(x,\tilde y)d\tilde y$ we have
\begin{eqnarray}
\int_0^1\int_0^1-g(x,\tilde y)\log\frac{x g(x,\tilde y)}{G_y(x,\tilde y)}\,d\tilde y\,\dd{}x
=H(\gamma) &-&\int_0^1\int_0^1g(x,\tilde y)\log x\,d\tilde y\,\dd{}x\cr
+\int_0^1\int_0^1g(x,\tilde y)\log G_y(x,\tilde y)\,d\tilde y\,\dd{}x.& &\end{eqnarray}
Integrating over $\tilde y$ the first integral on the RHS is 
\be \int_0^1-\log x\,\dd{}x = 1, \ee
 while the second integral is
\be \int_0^1\int_0^1 \frac{\partial}{\partial x}(G_y\log G_y-G_y)\,\dd{}x\,d\tilde y=\int_0^1(-1)d\tilde y= -1,\ee
since $G(1,y)=y$ and $G(0,y)=0$. So those two integrals cancel.
\end{proof}

\section{1\hh2 patterns}\label{inversionsection}

The number of occurrences $k(\pi)$ of the pattern $12$ in a permutation of $S_n$ has a simple generating function:
\be\label{12patternsgf}
\sum_{\pi\in S_n}x^{k(\pi)} = \prod_{j=1}^n(1+x+\dots+x^j) =
\sum_{i=0}^{\binom{n}{2}}C_i x^i.
\ee
One can see this by building up a permutation by insertions: when $i$ is inserted into the list of $\{1,\dots,i-1\}$,
the number of $12$ patterns created is exactly one less than the position of $i$ in that list. 

Theorem \ref{thm:constrained} suggests that to sample a permutation with a fixed density $\rho\in[0,1]$ of occurrences of
pattern $12$,
we should choose $x$ in the above expression so that the monomial $C_{[\rho n^2/2]}x^{[\rho n^2/2]}$ is the maximal one,
and then use the insertion probability measures which are (truncated)
geometric random variables with rate $x$.

Here $x$ is determined as a function of $\rho$ by Legendre duality (see below for an exact formula).
Let $r$ be defined by $e^{-r}=x$. In the limit of large $n$, the truncated geometric insertion 
densities converge to truncated exponential densities
\be f(x,y) = \frac{re^{-ry}}{1-e^{-rx}}\one_{[0,x]}(y).\ee

We can reconstruct the permuton from these insertion densities as follows. Note that the CDF of the insertion measure is
\be F(x,y) = \frac{1-e^{-ry}}{1-e^{-rx}}.\ee
We need to solve the ODE (\ref{FfromG}), which in this case (to simplify notation we changed $\tilde y$ to $y$) is
\be \frac{1-e^{-r G(x,y)}}{1-e^{-r x}} = \frac{\dd{}G(x,y)}{\dd{}x}.\ee
This can be rewritten as 
\be \frac{\dd{}x}{1-e^{-r x}} = \frac{\dd{}G}{1-e^{-r G(x,y)}}.\ee
Integrating both sides and solving for $G$ gives the CDF
\be\label{G12} G(x,y) = \frac1{r}\log\left(1+\frac{(e^{rx}-1)(e^{ry}-1)}{e^r-1}\right)\ee
which has density
\be\label{12g} g(x,y) = \frac{r(1-e^{-r})}{(e^{r(1-x-y)/2}-e^{r(x-y-1)/2}-e^{r(y-x-1)/2}+e^{r(x+y-1)/2})^2}.\ee
See Figure \ref{12permuton} for some examples for varying $\rho$.

The permuton entropy of this permuton is obtained from (\ref{fent}),
and as a function of $r$ it is, using the dilogarithm,
\be
H(r)=-\frac{2 \text{Li}_2\left(e^r\right)}{r}+\frac{\pi ^2}{3 r}-2 \log
\left(1-e^r\right)+\log \left(e^r-1\right)-\log (r)+2.\ee

The density $\rho$  of 1\hh2 patterns is the integral of the expectation of $f$:
\be
\rho(r) = \frac{r \left(r-2 \log \left(1-e^r\right)+2\right)-2 \text{Li}_2\left(e^r\right)}{
r^2}+\frac{\pi ^2}{3 r^2};\ee
see Figure \ref{inv} for $\rho$ as a function of $r$.

Figure \ref{entofinv} depicts the entropy as a function of $\rho$.

\begin{figure}[htbp]
\center{\includegraphics[width=3in]{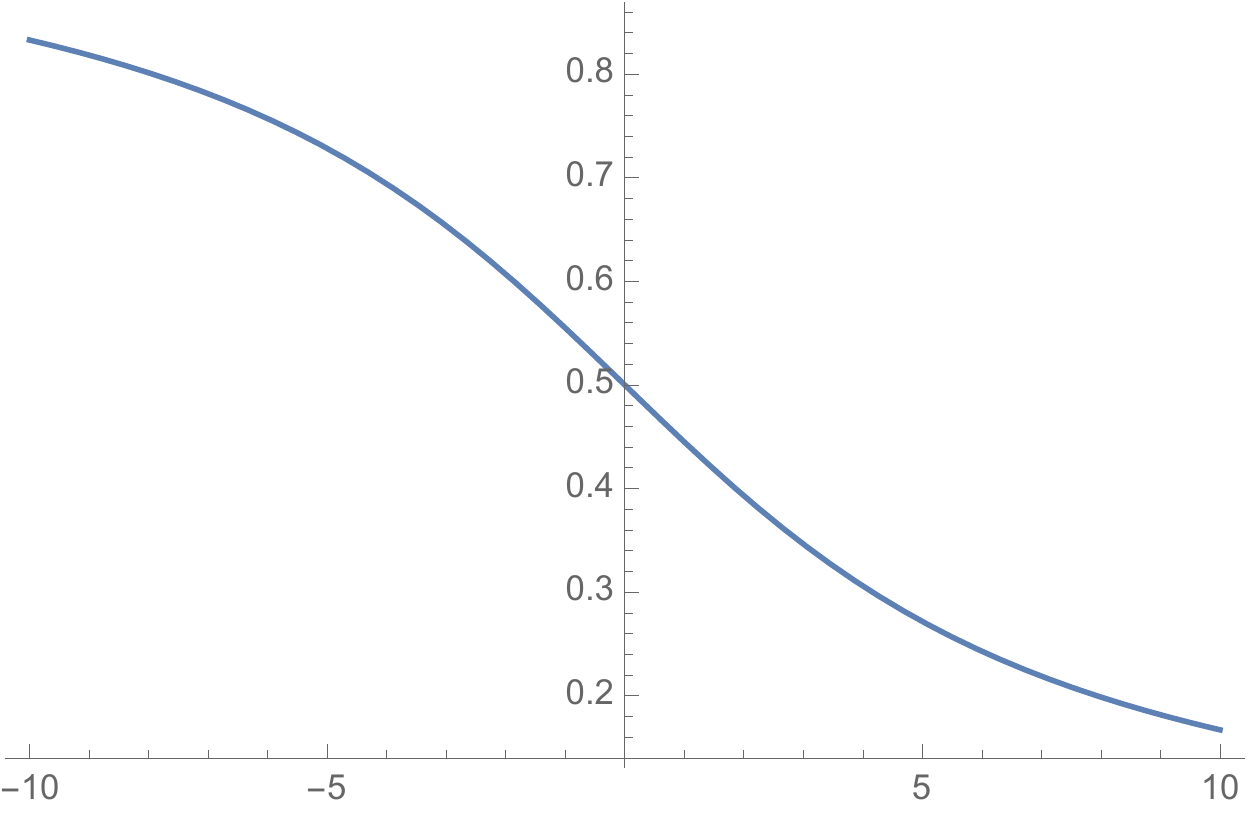}}
\caption{\label{inv}1\hh2 density as function of $r$.}
\end{figure}
\begin{figure}[htbp]
\center{\includegraphics[width=3in]{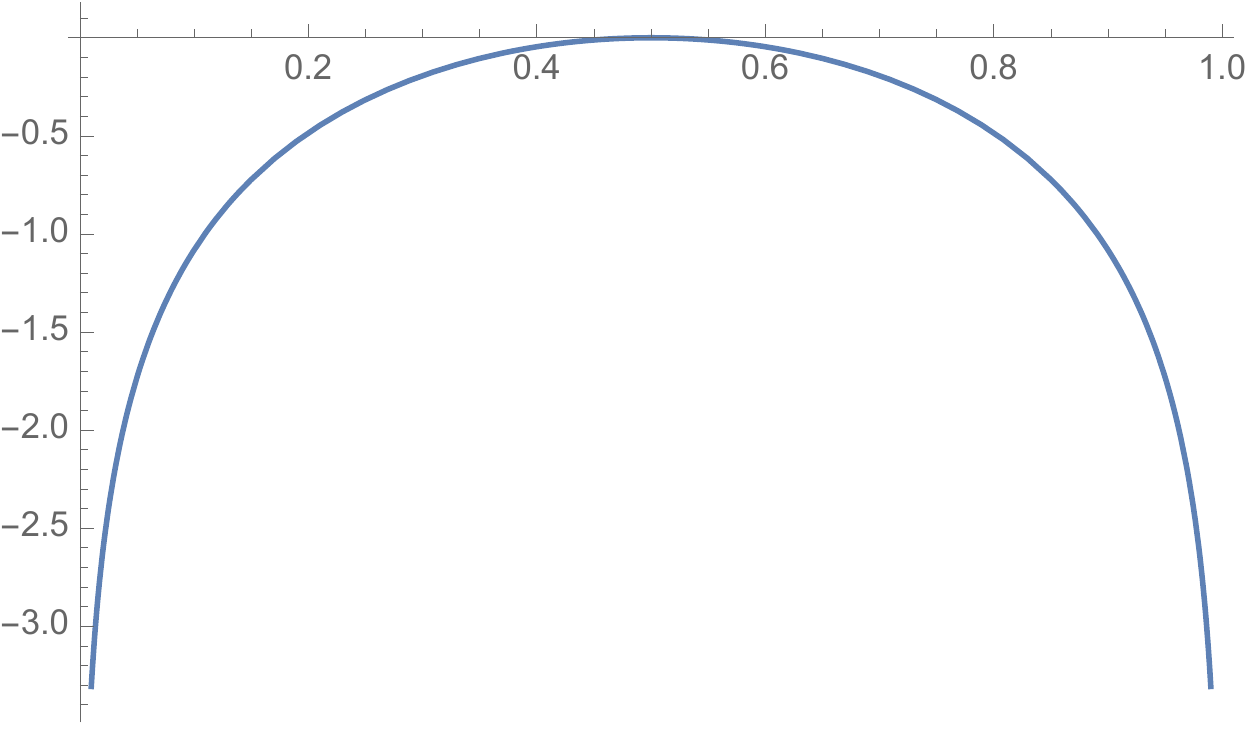}}
\caption{\label{entofinv}Entropy as function of 1\hh2 density.}
\end{figure}

\old{
Let us now compute the number of weak fixed points $FP(\gamma)$, that is, the limit as $n\to\infty$ of the expected number of fixed points in 
a size-$n$ permutation drawn from $\gamma_\rho$, where $\gamma_\rho$ is the above maximizing permuton with density $\rho$ of $12$ patterns. (Recall the definition of weak properties from the introduction.)
The probability that $(x,y)$ is a fixed point (conditioned on it being among the $n$ points chosen), 
is asymptotically zero unless $x=y$.
If $x=y$, it is a fixed point if the number of points with smaller $x$-coordinate equals the number of points with lower $y$ coordinate.
This is the central binomial coefficient $\binom{2m}{m}\approx \frac{1}{\sqrt{\pi m}}$ where 
$m$ is the number of points above and left of $(x,x)$. This leads to the formula (true for general absolutely continuous permutons)
\begin{proposition} For an absolutely continuous permuton,
\be\label{FPformula} FP(\gamma) = \int_0^1 \frac{g(x,x) \dd{}x}{\pi\sqrt{x-G(x,x)}}.\ee
\end{proposition}
Substituting (\ref{G12}) for $G$ yields the following plot of $FP(\gamma_\rho)$ against $\rho_{12}$.
\begin{figure}[htbp]
\center{\includegraphics[width=3in]{fixedpoints}}
\caption{\label{fixedpoints}The number of weak fixed points as a function of $\rho_{12}$. Note that 
it takes value $1$ at $\rho=1/2$ and tends to $\infty$ at $\rho=1$.}
\end{figure}

In the limit $\rho_{12}\to0$,  the number of weak fixed points tends to
$\frac1{\pi\sqrt{2}}\approx 0.225.$ 
}

\section{Star models}\label{starsection}
\newcommand{\st}{\ast\!}

In this section, we study the density of patterns of the form $\st\st\cdots\st k$,
which we refer to as star models.

Equation (\ref{12patternsgf}) gives the generating function for
occurrences of pattern $12$.
For a permutation $\pi$ let $k_1=k_1(\pi)$ be the number of $12$ patterns.
Let $k_2$ be the number of $\st\st3$ patterns, that is, patterns of the form $1\hh2\hh3$ or $2\hh1\hh3$.
A similar argument to that giving (\ref{12patternsgf}) 
shows that the joint generating function for $k_1$ and $k_2$ is
\be\label{k1k2} \sum_{k_1,k_2} C_{k_1,k_2}x^{k_1}y^{k_2} = \prod_{j=1}^n \left(\sum_{i=0}^j x^i y^{i(i-1)/2}\right).\ee
More generally, letting $k_3$ be the number of patterns $\st\st2$, that is, $1\hh3\hh2$ or $3\hh1\hh2$, and $k_4$ be the number of 
$\st\st1$ patterns, that is, $2\hh3\hh1$ or $3\hh2\hh1$. 
The joint generating function for these four types of patterns is
\be\label{biggf} \sum_{k_1,\ldots,k_4} C_{k_1,k_2,k_3,k_4}x^{k_1}y^{k_2}z^{k_3}w^{k_4} = 
\prod_{j=1}^n \left(\sum_{i=0}^j x^i y^{i(i-1)/2}z^{i(j-i)}w^{(j-i)(j-i-1)/2}\right).\ee
One can similarly write down the joint generating function for all patterns of type $\st\st\dots\st i$, with a string of some number $k$ of 
stars followed by some $i$ in $[k+1]$. (Note that with this notation,
$1\hh2$ patterns are $\ast2$ patterns.) These constitute a significant
generalization of the Mallows model {discussed in \cite{Starr}}. 

\subsection{The $\ast 2/\st\ast3$ model}

By way of illustration, let us consider the simplest case of $\ast2$ (that is, $1\hh2$) and $\st\st3$.

\begin{theorem}The feasible region for $(\rho_{\ast 2},\rho_{\ast\ast 3})$ is the region bounded below by the parameterized curve
\be (2t-t^2,3t^2-2t^3)_{t\in[0,1]}\ee 
and above by the parameterized curve 
\be (1-t^2,1-t^3)_{t\in[0,1]}.\ee
\end{theorem}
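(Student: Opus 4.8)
The plan is to use the insertion-measure description of permutons from the previous section, under which both pattern densities become explicit linear functionals of the first two moments of the insertion measures. Given a permuton $\gamma$ with insertion measures $\{\nu_x\}_{x\in[0,1]}$, let $Z_x\sim\nu_x$ (so $Z_x\in[0,x]$) and set $A(x)=\E[Z_x]$ and $B(x)=\E[Z_x^2]$. The first step is to show
\[
\rho_{\ast 2}(\gamma)=2\int_0^1 A(x)\,\dd x,\qquad\qquad \rho_{\ast\ast 3}(\gamma)=3\int_0^1 B(x)\,\dd x.
\]
The cleanest derivation works directly with permutons: let $p_k$ be a designated one of $k$ independent $\gamma$-points ($k=2$, resp.\ $3$); by symmetry $\rho_{\ast 2}(\gamma)=2\Pr(E)$ and $\rho_{\ast\ast 3}(\gamma)=3\Pr(E)$, where $E$ is the event that $p_k$ has both the largest $x$-coordinate and the largest $y$-coordinate among the $k$ points. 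Conditioning on the $x$-coordinate $x^{*}$ of $p_k$ (uniform on $[0,1]$), a uniform-marginal argument (because $G(x^{*},\cdot)$ pushes the $y$-marginal of $\gamma$ restricted to $[0,x^{*}]\times[0,1]$ to Lebesgue measure on $[0,x^{*}]$) shows that the other points, transported by the flow (\ref{VF}) to time $x^{*}$, are independent uniforms on $[0,x^{*}]$ while $p_k$ sits at $Z_{x^{*}}$; hence $\Pr(E\mid x^{*})=(x^{*})^{k-1}\,\E\bigl[(Z_{x^{*}}/x^{*})^{k-1}\bigr]=\E\bigl[Z_{x^{*}}^{k-1}\bigr]$, and integrating over $x^{*}$ gives $\Pr(E)=\int_0^1\E[Z_x^{k-1}]\,\dd x$, which is $\int_0^1 A$ when $k=2$ and $\int_0^1 B$ when $k=3$. (Equivalently, this is the $n\to\infty$ limit of (\ref{k1k2}): a permutation has $\sum_j i_j$ patterns $1\hh2$ and $\sum_j\binom{i_j}{2}$ patterns $\ast\ast 3$, with $i_j\approx nZ_{j/n}$; a check on the identity, anti-identity and uniform permutons, where $\nu_x$ is $\delta_x$, $\delta_0$, and Lebesgue on $[0,x]$, gives $(1,1)$, $(0,0)$, $(\tfrac12,\tfrac13)$.) Since every family $\{\nu_x\}$ with each $\nu_x$ a probability measure on $[0,x]$ is realized by a permuton, the feasible region $F$ equals the set of all pairs $\bigl(2\int_0^1 A,\ 3\int_0^1 B\bigr)$.

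Next I would record the pointwise constraint: for a probability measure on $[0,x]$, the moment pair $(A(x),B(x))$ ranges over exactly $\{(A,B):A^{2}\le B\le xA\}$ — the left inequality is Jensen's, the right is $Z_x^{2}\le xZ_x$, and every such pair is attained by a measure supported in $\{0,x\}$ (so the boundary cases $B=A^{2}$, via a point mass, and $B=xA$, via a $\{0,x\}$-measure, are realizable). Hence $F$ is the image of the convex set $\{(A(\cdot),B(\cdot)):A(x)^{2}\le B(x)\le xA(x)\text{ for a.e.\ }x\}$ under the linear map $(A,B)\mapsto\bigl(2\int_0^1 A,\,3\int_0^1 B\bigr)$, so $F$ is convex; it is also compact, since $\Gamma$ is compact and $\rho_{\ast 2},\rho_{\ast\ast 3}$ are continuous on it. Therefore it suffices, for each fixed value $\rho$ of $\rho_{\ast 2}$, to compute the maximum and the minimum of $\rho_{\ast\ast 3}$; these trace the two boundary curves.

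So fix $\int_0^1 A(x)\,\dd x=\rho/2$ with $0\le A(x)\le x$. For the \emph{maximum}, $B\le xA$ gives $\rho_{\ast\ast 3}=3\int_0^1 B\le 3\int_0^1 xA(x)\,\dd x$, and maximizing the linear functional $\int_0^1 xA(x)\,\dd x$ is a bang-bang problem solved by $A(x)=x\cdot\one_{[s,1]}(x)$, with $s$ chosen so that $\int_s^1 x\,\dd x=\rho/2$, i.e.\ $\rho=1-s^{2}$; the value is $3\int_s^1 x^{2}\,\dd x=1-s^{3}$, so with $t=s$ the upper boundary is $(1-t^{2},1-t^{3})$, realized by $\nu_x=\delta_x$ for $x\ge s$ and $\nu_x=\delta_0$ for $x<s$. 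For the \emph{minimum}, $B\ge A^{2}$ gives $\rho_{\ast\ast 3}\ge 3\int_0^1 A(x)^{2}\,\dd x$, and minimizing the convex functional $\int_0^1 A(x)^{2}\,\dd x$ is solved by $A(x)=\min(x,c)$, with $c$ chosen so that $\int_0^1\min(x,c)\,\dd x=\rho/2$, i.e.\ $\rho=2c-c^{2}$; the value is $3\bigl(\int_0^{c}x^{2}\,\dd x+\int_{c}^{1}c^{2}\,\dd x\bigr)=3c^{2}-2c^{3}$, so with $t=c$ the lower boundary is $(2t-t^{2},3t^{2}-2t^{3})$, realized by $\nu_x=\delta_x$ for $x\le c$ and $\nu_x=\delta_c$ for $x>c$. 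Since the maximum is a concave function of $\rho$, the minimum a convex one, and both curves share the endpoints $(0,0)$ (at $\rho=0$) and $(1,1)$ (at $\rho=1$), the convex set $F$ — which contains both curves and lies between them — is precisely the region they enclose.

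The only delicate step is the first: showing rigorously that the two densities of a permuton are $2\int_0^1\E[Z_x]\,\dd x$ and $3\int_0^1\E[Z_x^2]\,\dd x$, and that every admissible family of insertion measures arises from a permuton. Both facts are part of the insertion-measure correspondence set up in the previous section; the substance is the uniform-marginal computation that the earlier points, pushed to time $x^{*}$ by the flow (\ref{VF}), become independent uniforms on $[0,x^{*}]$. After that, the moment characterization is immediate, and the two extremal problems are routine one-variable calculus and linear programming.
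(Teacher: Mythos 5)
Your proof follows the same insertion-measure strategy as the paper, but actually executes it more carefully, and I want to flag one point of disagreement with the paper that works in your favor. The paper writes $I_{\ast\ast3}(x) = \int_0^x \E[Y_t]^2/2\,\dd{}t$, squaring the mean. This formula is incorrect for general insertion measures: the quantity that the law of large numbers concentrates $\int Y_t^2\,\dd t$ at is $\int\E[Y_t^2]\,\dd t$, not $\int\E[Y_t]^2\,\dd t$. (Sanity check on the uniform permuton, with $\nu_x$ uniform on $[0,x]$: the paper's formula gives $\rho_{\ast\ast3}=3\int_0^1 (x/2)^2\dd x = 1/4$, but the true value is $1/3$, which is what your $3\int_0^1\E[Z_x^2]\,\dd x$ gives.) Your derivation of $\rho_{\ast 2}=2\int_0^1\E[Z_x]\,\dd x$ and $\rho_{\ast\ast3}=3\int_0^1\E[Z_x^2]\,\dd x$ via conditioning on the designated point $p_k$ — using that the number of prior points below $p_k$ is measured by $Z_{x^*}=G(x^*,Y_{x^*})$ — is clean and correct, and you don't actually need the remark about transport by the flow: given $p_k$ at $(x^*,Y_{x^*})$, each other point independently falls in the southwest quadrant with probability exactly $G(x^*,Y_{x^*})=Z_{x^*}$, so $\Pr(E\mid x^*)=\E[Z_{x^*}^{k-1}]$ directly.

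Because you use the correct formula, you need to track the pair of moments $(A,B)=(\E[Z_x],\E[Z_x^2])$ and the pointwise feasibility $A^2\le B\le xA$, whereas the paper optimizes a single functional $\int(\E[Y_t])^2$ over $\E[Y_t]$ alone, implicitly assuming deterministic insertion (for which $\E[Y_t]^2=\E[Y_t^2]$). The two approaches reach the same boundary curves because the extremizers happen to be deterministic — at $B=xA$ the bang-bang optimizer $A=x\one_{[s,1]}$ makes $xA=A^2$ anyway, and at $B=A^2$ the minimizer is by construction deterministic — but your version makes this a theorem rather than a coincidence, and in particular correctly rules out permutons lying above the claimed upper curve. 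You also supply the step the paper leaves implicit: showing that the region \emph{between} the curves is attained, which you handle by observing that $F$ is the image of the convex set $\{(A,B):A^2\le B\le xA,\ 0\le A\le x\ \text{a.e.}\}$ under the linear map $(A,B)\mapsto(2\int A,3\int B)$ and is therefore convex and compact. This is a genuine improvement in rigor over the paper's argument. The remaining hypothesis you rightly flag — that every measurable admissible family $\{\nu_x\}$ produces a permuton and every permuton has such a family — is exactly the content of the insertion-measure correspondence set up in the preceding section, so invoking it is legitimate.
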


One can show that the permutons on the boundaries are 
unique and supported on line segments of slopes $\pm1$, and are as indicated in 
Figure~\ref{r1r2region}.
\begin{figure}[htbp]
\center\includegraphics[width=4in]{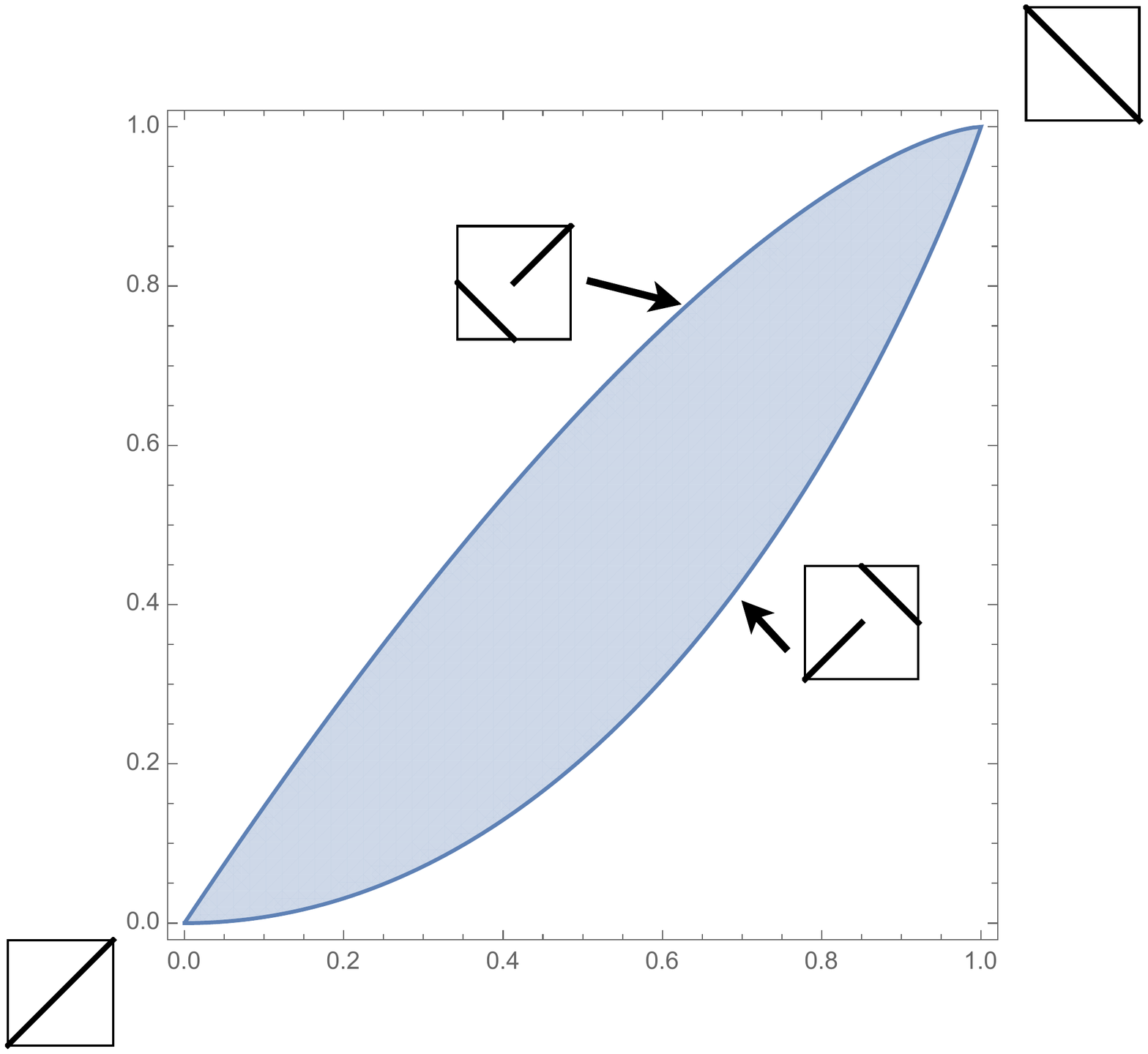}
\caption{\label{r1r2region}Feasible region for $(\rho_{\ast2},\rho_{\ast\ast3})$.}
\end{figure}

\begin{proof}While this can be proved directly from the generating function (\ref{k1k2}), we give a simpler proof using the 
insertion density procedure. During the insertion process let $I_{12}(x)$ be the fractional
number of $1\hh2$ patterns in the partial permutation constructed up to time $x$.
We want to stress the normalization factor here:
the number of $1\hh 2$ patterns in an $n$-permutation constructed up to time $x$ should be thought of as $I_{12}(x)n^2$,
in particular, $I_{12}(x)=\rho_{12}/2$. So, we get that $I_{12}(x) = \int_0^x Y_t\,\dd{}t$,
where $Y_t$ is the random variable giving the location of the insertion of $t$.
By the law of large numbers we can replace $Y_t$ here by its mean value,
that is, $I_{12}(x+\dd{}x) - I_{12}(x)$ is a sum (really, an integral) of the independent
insertions during time in $[x,x+\dd{}x]$, which have mean $Y_t$, so 
$$I_{12}'(t) = \E[Y_t].$$

Let $I_{\ast\ast3}(x)$ likewise be the fraction of $\st\st3$ patterns created by time $x$.
We have
\be I_{\ast\ast3}(x) = \int_0^x \E[Y_t]^2/2\,\dd{}t.\ee
Note that $I_{\ast\ast3}(x)=(\rho_{123}+\rho_{213})/6$.

\begin{figure}[htbp]
\center\includegraphics[width=2in]{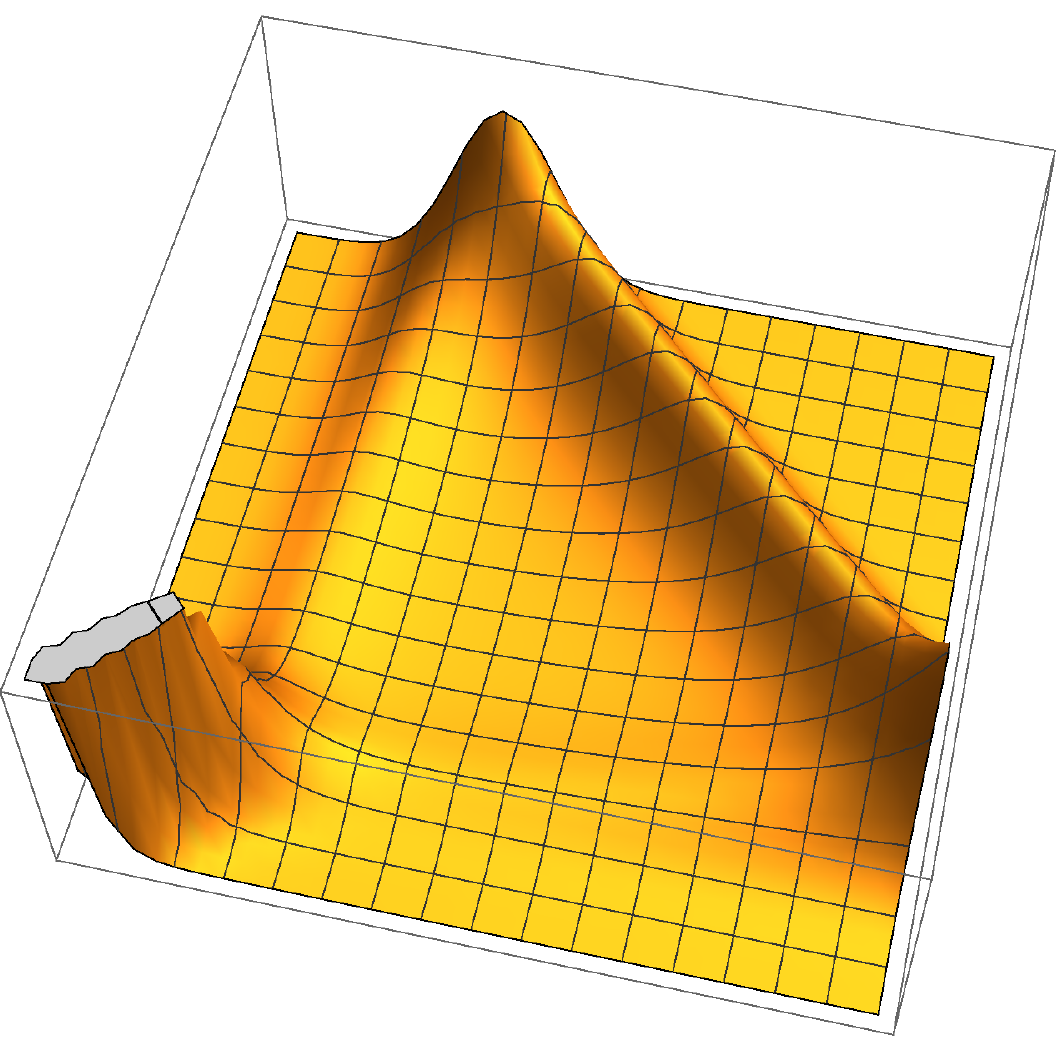}\includegraphics[width=2in]{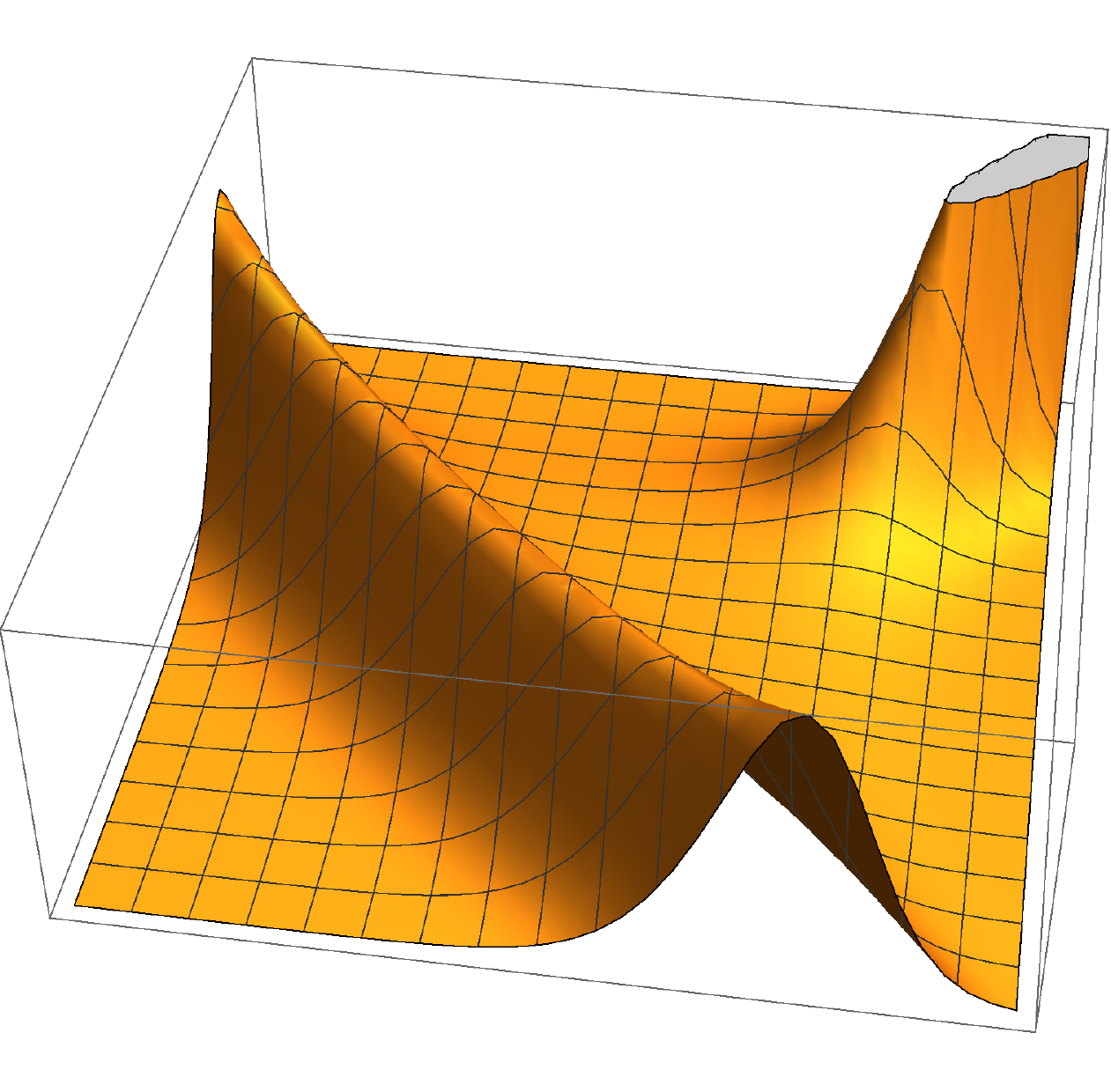}
\caption{\label{xypermutons}Permutons with $(\rho_{\ast2},\rho_{\ast\ast3})=(.5,.2),$ and $(.5,.53)$ respectively.}
\end{figure}

Let us fix $\rho_{12}=2\cdot I_{12}(1)$. To maximize $I_{\ast\ast3}(x)$, we need to maximize
\be\label{integralconstraint}\int_0^1 (I_{12}'(t))^2\,\dd{}t \text{~~~subject to~~~}\int_0^1 I_{12}'(t)\,\dd{}t=
\frac{\rho_{12}}2.\ee
This is achieved by making $I_{12}'(t)$ either zero or maximal. Since $I_{12}'(t)\le t,$ we can achieve
this by inserting points at the beginning
for as long as possible and then inserting points at the end, that is, $Y_t=0$ up to $t=a$ and then $Y_t=t$ for $t\in[a,1]$. 
The resulting permuton is then as shown in Figure \ref{r1r2region}:
on the square $[0,a]^2$ it is a descending diagonal and on the square $[a,1]^2$ it is an ascending diagonal.

Likewise to minimize the above integral (\ref{integralconstraint}) we need to make the derivatives $I_{12}'(t)$ as equal as possible.
Since $I_{12}'(t)\le t$, this involves setting $I_{12}'(t)=t$ up to $t=a$ and then having it constant after that.
The resulting permuton is then as shown in Figure \ref{r1r2region}:
on the square $[0,a]^2$ it is an ascending diagonal and on the square $[a,1]^2$ it is a descending diagonal.

A short calculation now yields the algebraic form of the boundary curves.
\end{proof}

Using the insertion density procedure outlined earlier, we see that the permuton as a function of $x,y$ has an explicit analytic density
(which cannot, however, be written in terms of elementary functions). 
The permutons for some values of $(\rho_{\ast2},\rho_{\ast\ast3})$
are shown in Figure \ref{xypermutons}.

The entropy $s(\rho_{\ast2},\rho_{\ast\ast3})$ is plotted in Figure \ref{entropy1x1xx}. 
It is strictly concave (see Theorem \ref{cov} below)
and achieves
its maximal value, zero, precisely
at the point $1/2,1/3$, the uniform measure.
\begin{figure}[htbp]
\center\includegraphics[width=5.4in]{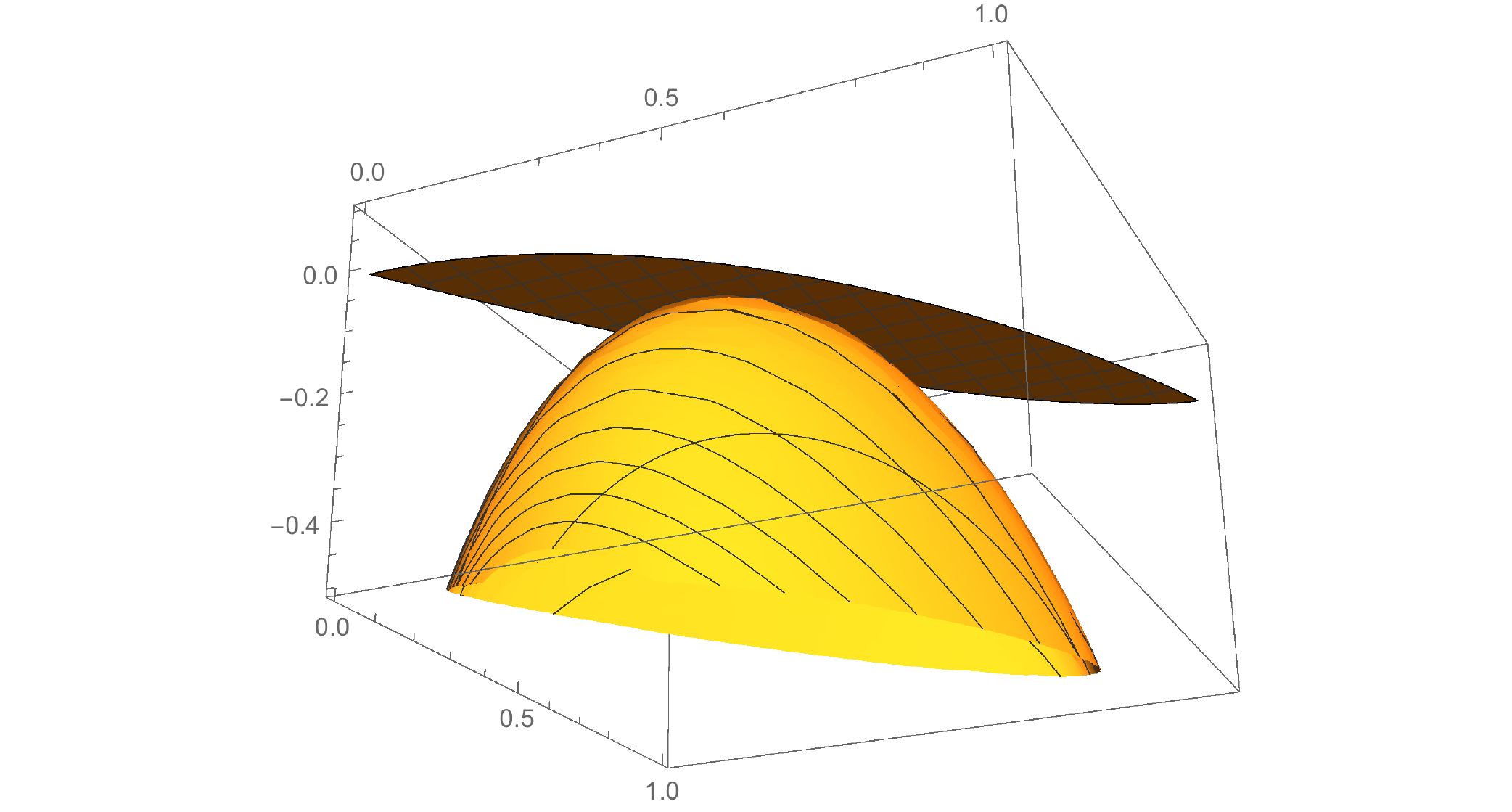}
\caption{\label{entropy1x1xx}The entropy function on the parameter space for $\rho_{12},\rho_{\st\st3}$.}
\end{figure}

\subsection{Concavity and analyticity of entropy for star models}

\begin{theorem}\label{cov}
For a star model with a finite number of densities $\rho_1,\dots,\rho_k$ of patterns $\tau_1\dots,\tau_k$ respectively,
the feasible region is convex and the entropy $H(\rho_1,\dots,\rho_k)$
is strictly concave and analytic on the feasible region. For each $\rho_1,\dots,\rho_k$ in the interior of the feasible
region there is a unique entropy-maximizing permuton with those densities, and this permuton has analytic 
probability density.
\end{theorem}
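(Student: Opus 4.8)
The plan is to reparametrise permutons by their insertion measures, where the problem linearises. By the correspondence of the previous section a permuton $\gamma$ is determined by a measurable family $\{\nu_x\}_{x\in[0,1]}$ with $\nu_x$ a probability measure on $[0,x]$; rescaling $\nu_x$ to a probability measure $\mu_x$ on $[0,1]$ via $\xi\mapsto x\xi$, the substitution $y=x\xi$ turns formula~(\ref{fent}) into $H(\gamma)=\int_0^1 h(\mu_x)\,\dd x$, where $h$ denotes differential entropy on $[0,1]$. Hence $H$ is a concave functional of the family $\{\mu_x\}$, strictly so in the sense that equality in concavity forces the two families to agree for a.e.\ $x$ (as $u\log u$ is strictly convex). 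The structural point is that each star-pattern density is a \emph{linear} functional of $\{\mu_x\}$: running the combinatorial count behind~(\ref{biggf}) in the continuum, a point inserted at time $x$ with fractional location $\xi$ creates, to leading order, $x^{\ell-1}Q_\tau(\xi)$ copies of a star pattern $\tau\in S_\ell$ for an explicit polynomial $Q_\tau$ of degree $\le\ell-1$, so that $\rho_\tau(\gamma)=c_\tau\int_0^1 x^{\ell-1}\,\E_{\mu_x}[Q_\tau(\xi)]\,\dd x$ with $c_\tau$ a combinatorial constant, and $\mu\mapsto\E_{\mu}[Q_\tau(\xi)]$ is linear. (This slightly refines the count in the $\ast2/\!\ast\!\ast3$ section, where $\E[Y_t^2]$ rather than $\E[Y_t]^2$ is the relevant quantity; the two coincide at the extremisers used there.)

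Convexity and concavity are then formal. The set ${\cal M}$ of all measurable families $\{\mu_x\}$ is convex, the map $\rho\colon{\cal M}\to\R^k$ sending $\{\mu_x\}$ to $(\rho_{\tau_1},\dots,\rho_{\tau_k})$ is linear, and --- via the bijection with permutons together with continuity of each $\rho_{\tau_i}$ and compactness of $\Gamma$ --- its image is exactly the feasible region, which is therefore convex. Likewise $s(\alpha)=\sup\{\,H(\{\mu_x\}):\rho(\{\mu_x\})=\alpha\,\}$ is the supremum of the concave functional $H$ over the fibres of a linear map, hence concave. Strict concavity and uniqueness follow from strict concavity of $H$ on ${\cal M}$: if families $\mathbf m,\mathbf m'$ are optimal for densities $\alpha\ne\alpha'$, then $\tfrac12(\mathbf m+\mathbf m')$ has density-vector $\tfrac12(\alpha+\alpha')$ and entropy strictly exceeding $\tfrac12(s(\alpha)+s(\alpha'))$, so $s$ is strictly concave on the interior; and if $\mathbf m,\mathbf m'$ both optimise the same $\alpha$, the midpoint family beats them unless $\mathbf m=\mathbf m'$. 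Existence of an optimiser is already guaranteed by compactness and upper semicontinuity, as in the proof of Theorem~\ref{thm:constrained}.

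For the analyticity statements I would pass to the Lagrangian dual. A pointwise Lagrangian argument --- valid for $\alpha$ in the interior of the feasible region, where a Slater-type condition holds, and legitimate because the only constraint coupling the $\mu_x$ is the linear one --- shows that the optimal family is $\mu_x^\lambda$ with density $\phi_x^\lambda$ proportional to $\exp(-p_x^\lambda(\xi))$ on $[0,1]$, where $p_x^\lambda(\xi)=\sum_{i=1}^k\lambda_i c_{\tau_i}x^{k_i-1}Q_{\tau_i}(\xi)$ (with $\tau_i\in S_{k_i}$) is a polynomial in $\xi$ whose coefficients depend polynomially on $x$ and linearly on the multiplier $\lambda\in\R^k$. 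These densities are manifestly real-analytic in $(\xi,x)$, hence so are the insertion densities $f(x,y)=x^{-1}\phi_x^\lambda(y/x)$; feeding these into the characteristic ODE~(\ref{VF}), whose right-hand side $F(x,\cdot)$ is then analytic, the analytic dependence of ODE solutions on parameters and initial data --- followed by analytic inversion of $y=G(x,\tilde y)$, legitimate since $G(x,\cdot)$ is strictly increasing --- yields analyticity of the reconstructed permuton density $g$. Finally, with $Z_x(\lambda)=\int_0^1 e^{-p_x^\lambda(\xi)}\,\dd\xi$ and $D(\lambda)=\int_0^1\log Z_x(\lambda)\,\dd x$, the duality reads $s(\alpha)=\inf_{\lambda}\bigl(\langle\lambda,\alpha\rangle+D(\lambda)\bigr)$; since $D$ is real-analytic with $\nabla^2 D(\lambda)=\int_0^1\mathrm{Cov}_{\mu_x^\lambda}\bigl((c_{\tau_i}x^{k_i-1}Q_{\tau_i}(\Xi))_{i=1}^k\bigr)\,\dd x$ positive definite, where $\Xi\sim\mu_x^\lambda$ (positive definiteness being exactly the statement that the $\rho_{\tau_i}$ are linearly independent, which we may assume), the map $\lambda\mapsto-\nabla D(\lambda)$ is an injective analytic local diffeomorphism, its image is the interior of the feasible region, and the analytic implicit function theorem makes $\lambda^*(\alpha)$, hence $s(\alpha)=\langle\lambda^*(\alpha),\alpha\rangle+D(\lambda^*(\alpha))$, real-analytic there.

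The main obstacle, in my view, is not any single step but the combination of: (i) making the pointwise Lagrangian/duality argument fully rigorous over the infinite-dimensional variable $\{\mu_x\}$ --- closedness of the range, absence of a duality gap, attainment of the dual infimum for interior $\alpha$; (ii) verifying the non-degeneracy $\nabla^2 D\succ0$, which is where one must actually use the shape of the polynomials $Q_{\tau_i}$ rather than soft convexity; and the more technical (iii) checking that reconstruction of $g$ from an analytic insertion family genuinely preserves analyticity, including near the corner $x=0$ and wherever densities degenerate. The genuinely new input --- the linearity of star-pattern densities in $\{\mu_x\}$ --- is what makes everything else soft.
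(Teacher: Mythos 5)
Your proposal is correct and, at a technical level, traces the same path as the paper's proof: both ultimately compute the Hessian of the dual free energy (your $D(\lambda)$, the paper's $F(\alpha)$, related by the substitution $t=x\xi$) as an integral over $x$ of covariance matrices to get strict concavity and injectivity of the gradient, both pass to the entropy via Legendre/Lagrangian duality, and both obtain analyticity of the maximizing permuton's density by feeding the exponential-family insertion density into the characteristic ODE (\ref{VF}). The difference is mainly one of starting point and emphasis. The paper begins on the discrete side with the product generating function $Z_n=\prod_{j}\sum_i e^{p(i,j)}$, extracts the normalized free energy $F$ as a Riemann-sum limit, and deduces convexity of the feasible region somewhat indirectly from the upper level sets of $H$. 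You instead reparametrize permutons by their insertion families $\{\mu_x\}$ at the outset and \emph{observe} that in these coordinates $H$ is $\int_0^1 h(\mu_x)\,\dd x$ and each star-density is a linear functional of $\{\mu_x\}$; this buys a more structural proof of convexity (image of a convex set under a linear map), makes strict concavity and uniqueness genuinely formal consequences of strict convexity of $u\log u$, and clarifies why everything works only for star patterns. The price, which you correctly flag as the main obstacle, is that you must make the infinite-dimensional Lagrangian/duality argument rigorous, whereas the paper sidesteps this by working with finite $Z_n$ and taking limits, where concentration of the discrete measure around the mode does that work. Your footnote about $\E[Y_t^2]$ versus $\E[Y_t]^2$ in the $\ast 2/\ast\ast 3$ section is a genuine catch of a sloppiness there, but it does not affect the proof of the theorem at hand, whose density formula $\rho_i=\int_0^1\bigl\langle T^{r_i}(x-T)^{s_i}/(r_i!s_i!)\bigr\rangle\,\dd x$ is, as you say, linear in $\mu_x$.
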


One can construct examples where the feasible region is not strictly convex: e.g. in the case of densities
$\st\st1$ and $\st\st3$.

\begin{proof}
Let $k_i$ be the length of the pattern $\tau_i$.

The generating function for permutations of $[n]$ counting patterns $\tau_i$ is
\be Z_n(x_1,\dots,x_k) = \sum_{\pi\in S_n} x_1^{n_1}\dots x_k^{n_k}\ee
where $n_i=n_i(\pi)$ is the number of occurrences of pattern $\tau_i$ in $\pi$. 
{The number of permutations with density $\rho_i$ of pattern $\tau_i$ 
is the sum of the coefficients of the terms $x_1^{n_1}\dots x_k^{n_k}$ with
$n_i\approx \frac{n^{k_i}}{k_i!}\rho_i$. The entropy $H(\rho_1,\dots,\rho_k)$ is the log of this sum,
minus $\log n!$ (and normalized by dividing by $n$).}

As discussed above, $Z_n$ can be written as a product generalizing (\ref{biggf}). 
Write $x_i = e^{a_i}$.
Then the product expression for $Z_n$ is
\be Z_n = \prod_{j=1}^n\sum_{i=0}^j e^{p(i,j)},\ee
where $p(i,j)$ is a polynomial in $i$ and $j$ with coefficients that are linear in the $a_i$. 
For large $n$ it is convenient to normalize the $a_i$ by an appropriate power of $n$ (and a combinatorial factor):
write 
\be x_i = e^{a_i} = \exp{\left(\alpha_i/n^{k_i-1}\right)}.\ee

Writing $i/n=t$ and $j/n=x$, the expression for $\log Z_n$ is then a Riemann sum, once normalized:
In the limit $n\to\infty$ the ``normalized free energy'' $F$ is
\be F:=\lim_{n\to\infty} \frac1n(\log Z_n - \log n!) = \int_0^1\left[\log\int_0^xe^{\tilde p(t,x)}\,\dd{}t\right]\,\dd{}x\ee
where $\tilde p(t,x)= p(nt,nx)+o(1)$ is a polynomial in $t$ and $x$, independent of $n$, with coefficients 
which are linear functions of the $\alpha_i$. Explicitly we have 
\be \tilde p(t,x) = \sum_{i=1}^k  \alpha_i\frac{t^{r_i}(x-t)^{s_i}}{r_i!s_i!}\ee
where $r_i+s_i=k_i-1$ and, if $\tau_i = \st\dots\st\ell_i$ then $s_i=k_i-\ell_i.$

We now show that $F$ is concave as a function of the $\alpha_i$, by computing its Hessian matrix. We have

\be \frac{\partial F}{\partial\alpha_i} = 
\int_0^1\frac{\int_0^x \frac{t^{r_i}(x-t)^{s_i}}{r_i!s_i!}e^{\tilde p(t,x)}\dd{}t}{\int_0^x e^{\tilde p(t,x)}\dd{}t} \dd{}x 
=\int_0^1 \left\langle \frac{T^{r_i}(x-T)^{s_i}}{r_i!s_i!}\right\rangle \dd{}x\ee
where $T\in[0,x]$ is the random variable with (unnormalized) density
$e^{\tilde p(t,x)}$, and
$\left\langle\cdot\right\rangle$ is the expectation with respect to this probability 
measure. 

Differentiating a second time we have
\begin{eqnarray} \frac{\partial^2 F}{\partial\alpha_j\partial \alpha_i} & = &
\int_0^1 \left\langle \frac{T^{r_i+r_j}(x-T)^{s_i+s_j}}{r_i!r_j!s_i!s_j!}\right\rangle-
\left\langle \frac{T^{r_i}(x-T)^{s_i}}{r_i!s_i!} \right\rangle\left\langle \frac{T^{r_j}(x-T)^{s_j}}{r_j!s_j!}\right\rangle \dd{}x\cr
&=&\int_0^1 \text{Cov}\left[\frac{T^{r_i}(x-T)^{s_i}}{r_i!s_i!},\frac{T^{r_j}(x-T)^{s_j}}{r_j!s_j!}\right]\,\dd{}x\end{eqnarray}
where $\text{Cov}$ is the covariance.

The covariance matrix of a set of random variables with no linear dependencies
is positive definite. 
Thus we see that the Hessian matrix is an integral
of positive definite matrices and so is itself positive definite. 
This completes the proof of strict concavity of the free energy $F$. 

Since $Z_n$ is the (unnormalized) probability generating function, the vector of densities
as a function of the $\{\alpha_i\}$ is obtained for each $n$ by the gradient of the logarithm
\be\label{Leg} (\rho_1,\dots,\rho_k) = \frac1n\nabla \log Z_n(\alpha_1,\dots,\alpha_k).\ee
In the limit we can replace $\frac1n\nabla\log Z_n$ by $\nabla F$; by strict concavity of $F$ its gradient is injective,
and surjective onto the interior of the feasible region. 
In particular there is a unique choice of $\alpha_i$'s for every choice of densities in the interior of the feasible region.
{Note that the $\alpha_i$'s determine the insertion measures (these are the measures with unnormalized density $e^{\tilde p(t,x)}$), and thus the permuton itself,  
proving uniqueness of the entropy maximizer}. Analyticity of the probability density is a consequence of analyticity
of the associated differential equation \eqref{FfromG}. 

{
By strict concavity of the free energy, we can relate the free energy to the entropy by the following
standard argument.
Referring back to the first paragraph of the proof, we have proven that, when $x_i=e^{a_i}$, 
the generating function $Z_n$ concentrates
its mass on the terms $x_1^{n_1}\dots x_k^{n_k}$ for which $n_i\approx \frac{n^{k_i}}{k_i!}\rho_i$ 
(where $a_i$ and $\rho_i$
are related by (\ref{Leg})), in the sense
that a fraction $1-o(1)$ of the total mass of $Z_n$ is on these terms. The entropy is the log of the sum of the coefficients
in front of these relevant terms. The entropy can thus be obtained from the free energy 
$\frac1n\log Z_n/n!$
by subtracting off $\frac1n\log(x_1^{n_1}\dots x_k^{n_k}).$
This shows that the entropy function $H$ is the Legendre dual of $F$,
that is, 
\be H(\rho_1,\dots,\rho_k) = \max_{\{\alpha_i\}}\{F(\alpha_1,\dots,\alpha_k) -\sum\alpha_i\rho_i \}.\ee
Analyticity of $F$ implies that $H$ is both analytic and strictly concave. 

The ``upper level sets'' $\{\vec\rho~:~H(\vec\rho)\ge -M\}$ of $H$ are convex by concavity of $H$. 
Their union is the interior of the feasible region, which, being an increasing
union of convex sets, is convex.}
\end{proof}

\section{PDEs for permutons}\label{PDEsection}

For permutations with constraints on patterns of length $3$ (or less) one can write explicit PDEs for the maximizers.
It is possible that these may be used to show either analyticity or uniqueness, or both (although we have accomplished neither goal).

Let us first redo the case of $1\hh2$-patterns, which we already worked out by another method in Section \ref{inversionsection}.

\subsection{Patterns $1\hh2$}

The density of patterns $1\hh2$ is given in (\ref{2}). 
Consider the problem of maximizing $H(\gamma)$ subject to the constraint $I_{12}(\gamma)=\rho$.
This involves finding a solution to the Euler-Lagrange equation 
\be \label{EL1}\dd{}H+\alpha\, \dd{}I_{12}=0\ee
for some constant $\alpha$,
for all variations $g\mapsto g+\eps h$ fixing the marginals.

Given points $(a_1,b_1),(a_2,b_2)\in[0,1]^2$ we can consider the change in $H$ and $I_{12}$ when we
remove an infinitesimal mass $\delta$ from $(a_1,b_1)$ and $(a_2,b_2)$ and add it to locations $(a_1,b_2)$ and $(a_2,b_1)$. 
(Note that two measures with the same marginals are connected by convolutions of such operations.)
The change in $H$ to first order under such an operation is $\delta$ times (letting $S_0(p):=-p\log p$)
\begin{multline}-S_0'(g(a_1,b_1))-S_0'(g(a_2,b_2))+S_0'(g(a_1,b_2))+S_0'(g(a_2,b_1))\\
=\log\frac{g(a_1,b_1)g(a_2,b_2)}{g(a_1,b_2)g(a_2,b_1)}.\end{multline}

The change in $I_{12}$ to first order is $\delta$ times
\begin{multline}
\sum_{i,j=1}^2 (-1)^{i+j}\left(\int_{a_i<x_2}\int_{b_j<y_2} g(x_2,y_2)\dd{}x_2\,\dd{}y_2+\int_{x_1<a_i}\int_{y_1<b_j} g(x_1,y_1)\dd{}x_1\,\dd{}y_1\right)\\
=\sum_{i,j=1}^2 (-1)^{i+j} \left(G(a_i,b_j) + (1-a_i-b_j+G(a_i,b_j))\right).
\end{multline}

Differentiating (\ref{EL1}) with respect to $a=a_1$ and $b=b_1$,
we find 
\be \frac{\partial}{\partial a}\frac{\partial}{\partial b}\log g(a,b)+2\alpha g(a,b)=0.\ee
One can check that the formula (\ref{12g}) satisfies this PDE.

\subsection{Patterns $1\hh2\hh3$}
The density of patterns $1\hh2\hh3$  is
\be I_{123}(\gamma) = 6\int_{x_1<x_2<x_3,~y_1<y_2<y_3}g(x_1,y_1)g(x_2,y_2)g(x_3,y_3)\dd{}x_1\,\dd{}x_2\cdots\,\dd{}y_3.\ee
Under a similar perturbation as above the change in $I_{123}$ to first order is $\delta$ times

\begin{multline}\dd{}I_{123}=
  6\sum_{i,j=1}^2(-1)^{i+j}\left(\int_{a_i<x_2<x_3,~b_j<y_2<y_3}g(x_2,y_2)g(x_3,y_3)\dd{}x_2\,\dd{}x_3\,\dd{}y_2\,\dd{}y_3\right.\\
+\int_{x_1<a_i<x_3,~y_1<b_j<y_3}g(x_1,y_1)g(x_3,y_3)\dd{}x_1\,\dd{}x_3\,\dd{}y_1\,\dd{}y_3\\
  +\left.\int_{x_1<x_2<a_i,~y_1<y_2<b_j}g(x_1,y_1)g(x_2,y_2)\dd{}x_1\,\dd{}x_2\,\dd{}y_1\,\dd{}y_2\right).
  \end{multline}

The middle integral here is a product 
\begin{multline}
\int_{x_1<a_i,~y_1<b_j}g(x_1,y_1)\dd{}x_1\,\dd{}y_1\int_{a_i<x_3,~b_j<y_3}g(x_3,y_3)\dd{}x_3\,\dd{}y_3 \\
= G(a_i,b_j)(1-a_i-b_j+G(a_i,b_j)).
\end{multline}

Differentiating each of these three integrals with respect to both $a=a_1$ and $b=b_1$ (then only the $i=j=1$ term survives) 
gives, for the first integral
\be g(a,b)\int_{a<x_3,~b<y_3}g(x_3,y_3)\dd{}x_3\,\dd{}y_3=g(a,b)(1-a-b+G(a,b)),\ee
for the second integral
\begin{multline}
g(a,b)(1-a-b+2G(a,b))+G_x(a,b)(-1+G_y(a,b))\\
+G_y(a,b)(-1+G_x(a,b)),
\end{multline}
and the third integral
\be g(a,b)\int_{x_1<a,~b<y_1}g(x_1,y_1)\dd{}x_1\,\dd{}y_1=g(a,b)G(a,b).\ee
 
Summing, we get (changing $a,b$ to $x,y$)
\be (\dd{}I_{123})_{xy} = 12G_{xy}(1-x-y+2G)+12G_xG_y-6G_x-6G_y.\ee

Thus the Euler-Lagrange equation is
\be (\log G_{xy})_{xy}+6\alpha\big(2G_{xy}(1-x-y+2G)+2G_xG_y-G_x-G_y\big)=0.\ee

This simplifies somewhat if we define $K(x,y) = 2G(x,y)-x-y+1.$ Then 
\be (\log K_{xy})_{xy}+3\alpha\left(2K_{xy}K+K_xK_y-1\right)=0.\ee

In a similar manner we can find a PDE for the permuton with fixed densities of other patterns of length $3$.
In fact one can proceed similarly for longer patterns, getting systems of PDEs, but the complexity grows with the length.

\section{The $1\hh2/1\hh2\hh3$ model}
When we fix the density of patterns $1\hh2$ and $1\hh2\hh3$, the feasible region has a complicated structure, see Figure \ref{scallopedtri}.
\begin{figure}[htbp]
\center{\includegraphics[width=5.4in]{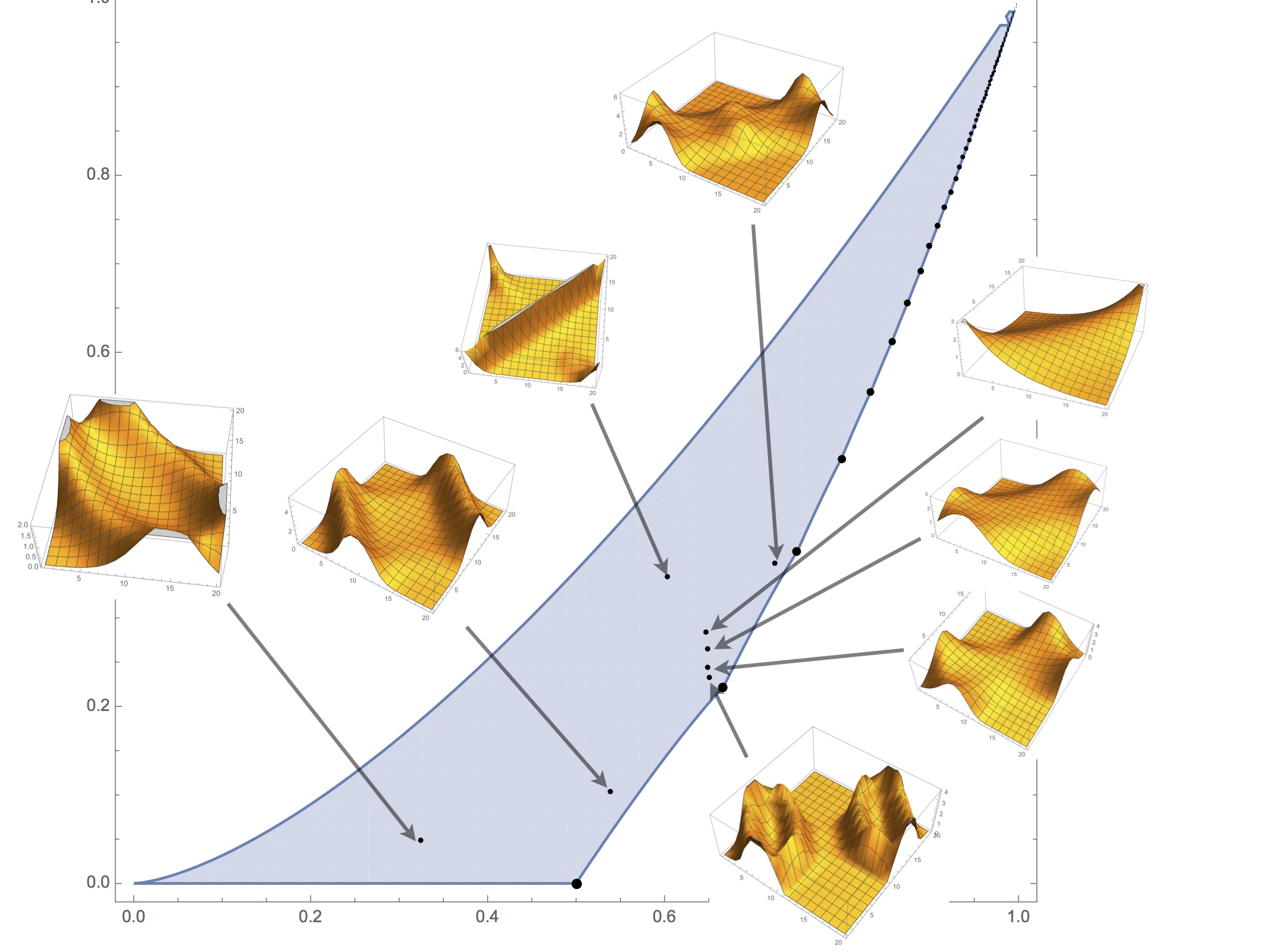}}
\caption{\label{scallopedtri}The feasible region for $\rho_{12}$ versus $\rho_{123}$, with corresponding permutons 
(computed numerically) at selected points.}
\end{figure}

\begin{theorem}
The feasible region for $\rho_{12}$ versus $\rho_{123}$ is the same as the feasible region of edges and triangles
in the graphon model.
\end{theorem}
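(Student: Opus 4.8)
## Proof Proposal for the Theorem

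The plan is to establish a density-preserving bijection between the relevant limit objects on the two sides, so that the feasible region for $(\rho_{12},\rho_{123})$ in the permuton model coincides pointwise with the feasible region for $(\rho_{\text{edge}},\rho_{\triangle})$ in the graphon model, known by Razborov \cite{R, R2} to be the ``scalloped triangle.'' The key observation is that the $1\hh2$ and $1\hh2\hh3$ densities of a permuton $\gamma$ depend only on the ``comparability data'' of $\gamma$: if we sample points $(X_i,Y_i)$ i.i.d. from $\gamma$, the only information about a pair $\{i,j\}$ that enters $\rho_{12}$ and $\rho_{123}$ is whether the pair is a non-inversion (both coordinates in the same order) or an inversion. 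Thus $\rho_{12}(\gamma)$ is exactly the probability that a random pair is a non-inversion, and $\rho_{123}(\gamma)$ is exactly the probability that a random triple is totally ordered, i.e., that all three of its pairs are non-inversions. Associating to $\gamma$ the random graph on the sampled points whose edges are the non-inversion pairs, one sees $\rho_{12}(\gamma)$ is an edge density and $\rho_{123}(\gamma)$ is (six times the ordered-triple probability, which equals) a triangle density of the corresponding graphon.

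Concretely, I would proceed as follows. First, given a permuton $\gamma$, define the symmetric measurable function $W_\gamma:[0,1]^2\to[0,1]$ by $W_\gamma(s,t)=\Pr\big[(X,Y),(X',Y')\text{ form a non-inversion}\mid \text{relevant coordinates}\big]$ made precise via the CDF $G$; explicitly, using that $X$ is uniform, the probability two independent $\gamma$-samples with $x$-coordinates $s<t$ are a non-inversion is a function of $G$ that can be written out, giving a graphon $W_\gamma$. Second, verify the two identities $\rho_{12}(\gamma)=\int W_\gamma$ (edge density $t(K_2,W_\gamma)$) and $\rho_{123}(\gamma)=t(K_3,W_\gamma)$ (triangle density); this is a direct unpacking of \eqref{2} and the $I_{123}$ integral, using that ``$1\hh2\hh3$'' means all three pairs are in increasing order. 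This shows the permuton feasible region is contained in the graphon (edge, triangle) feasible region. Third, for the reverse containment, given any graphon $W$ realizing a point $(e,\tau)$ in the scalloped triangle, I would construct a permuton with the same $(\rho_{12},\rho_{123})$; the natural candidate is to use $W$ as a set of "insertion probabilities" or, more robustly, to approximate $W$ by a finite step function, realize the corresponding finite weighted comparability structure by an explicit permutation (e.g., a suitable blow-up/random interleaving construction on $n$ points whose non-inversion graph converges to $W$), and pass to the permuton limit; continuity of $\rho_{12},\rho_{123}$ in the permuton topology then transfers the densities.

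The main obstacle is the reverse containment: not every graphon arises as $W_\gamma$ for a permuton (the $W_\gamma$ are constrained — they encode a two-dimensional partial order structure, not an arbitrary symmetric kernel), so one cannot simply invert the map. The fix is that we do not need to realize the given graphon itself, only its edge and triangle densities, and for \emph{those} two parameters it suffices to realize the extremal/boundary behavior and interpolate. I would first handle the boundary of the scalloped triangle: Razborov's extremal graphons for the triangle-density problem are step functions of a special ``clique plus independent pieces'' form, and these are exactly realizable by permutons built from direct sums and skew sums of identity and anti-identity blocks (a direct sum of an increasing block with $k$ parts gives non-inversion graph = disjoint union of cliques; a skew sum gives complete multipartite, i.e., a ``blow-up'' complement), matching the two families of extremizers. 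Then, since both feasible regions are the closures of connected two-dimensional sets and we have matching boundaries plus the interior realizability via convex-combination-type constructions (cf. Theorem~\ref{thm:interior} and the heat-flow density argument of Theorem~\ref{thm:Dense}), the two regions coincide. The only real care needed is checking that the permuton constructions indeed hit every boundary point of Razborov's region, for which I would cite the explicit parameterizations of that region's upper and lower boundary curves and match them against the densities computed from the block permutons.
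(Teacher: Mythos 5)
Your strategy matches the paper's: the comparability-graph correspondence identifies $\rho_{12}$ with edge density and $\rho_{123}$ with triangle density of the non-inversion graphon, giving $\cal R \subseteq \cal T$ immediately, and the reverse inclusion comes from hitting the boundary of Razborov's scalloped triangle with block permutons and filling the interior topologically. The paper instantiates the second step with a concrete continuous two-parameter family $\gamma_{a,b}$ of permutons supported on diagonal line segments, parameterized over a triangle $\{0\le a\le 1,\ 0\le b\le a/2\}$; it checks that the three edges of the parameter triangle map onto the three boundary pieces of $\cal T$ (upper-left curve, bottom line, scallops) and then concludes by a standard covering argument for continuous maps from a disk.

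Two details to tighten. First, you cannot invoke Theorem~\ref{thm:interior} for the interior-filling step as written: the boundary-realizing permutons are singular (supported on line segments) and so have entropy $-\infty$, hence belong to no $R_M$, so the hypothesis of that theorem fails. The correct and simpler tool is the entropy-free topological fact that a continuous map from a parameter disk whose restriction to the boundary circle surjects onto $\partial\cal T$ (with nonzero degree) has image covering all of $\cal T$; no heat-flow smoothing or entropy bound is needed. Second, a minor slip in your block-permuton dictionary: a \emph{skew} sum of increasing (identity) blocks yields a disjoint union of cliques in the non-inversion graph, while a \emph{direct} sum of decreasing (anti-identity) blocks yields a complete multipartite graph --- you have these two reversed.
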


\begin{proof}
Let $\cal R$ denote the feasible region for pairs $\big(\rho_{12}(\gamma),\rho_{123}(\gamma)\big)$ consisting of
the 1\hh2 density and 1\hh2\hh3 density of a permuton (equivalently, for the closure of the set of such pairs
for finite permutations).

Each permutation $\pi \in S_n$ determines a (two-dimensional) poset $P_\pi$ on $\{1,\dots,n\}$
given by $i \prec j$ in $P_\pi$ iff $i < j$ and $\pi_i < \pi_j$.  The {\em comparability graph} $G(P)$
of a poset $P$ links two points if they are comparable in $P$, that is, $x \sim y$ if $x \prec y$ or $y \prec x$.
Then $i \sim j$ in $G(P_\pi)$ precisely when $\{i,j\}$ constitutes an incidence of the pattern 1\hh2,
and $i \sim j \sim k \sim i$ when $\{i,j,k\}$ constitutes an incidence of the pattern 1\hh2\hh3.
Thus the 1\hh2 density of $\pi$ is equal to the edge density of $G(P_\pi)$, and the 1\hh2\hh3 density of $\pi$ is
the triangle density of $G(P_\pi)$---that is, the probability that three random vertices induce the
complete graph $K_3$.  This correspondence extends perfectly to limit objects, equating 1\hh2 and 1\hh2\hh3 densities
of permutons to edge densities and triangle densities of graphons.

The feasible region for edge and triangle densities of graphs (now, for graphons) has been studied for many years and
was finally determined by Razborov \cite{R}; we call it the ``scalloped triangle'' $\cal T$. It follows from the above discussion
that the feasibility region $\cal R$ we seek for permutons is a subset of $\cal T$, and it remains only to prove that $\cal R$ is
all of $\cal T$.  In fact we can realize $\cal T$ using only a rather simple two-parameter family of permutons.

Let reals $a$, $b$ satisfy $0 < a \le 1$ and $0 < b \le a/2$, and set $k := \lfloor a/b \rfloor$.  Let us denote by $\gamma_{a,b}$
the permuton consisting of the following diagonal line segments, all of equal density: 
\begin{enumerate}
\item The segment $y = 1-x$, for $0 \le x \le 1{-}a$;
\item The $k$ segments $y = (2j{-}1)b-1+a-x$ for $1-a+(j{-}1)b < x \le 1-a+jb$, for each $j = 1,2,\dots,k$; 
\item The remaining, rightmost segment $y = 1+kb-x$, for $1-a+kb < x \le 1$.

\end{enumerate}
(See Fig.~\ref{fig:abperm} below.)

\begin{figure}[htbp]
\center{\includegraphics[width=3.5in]{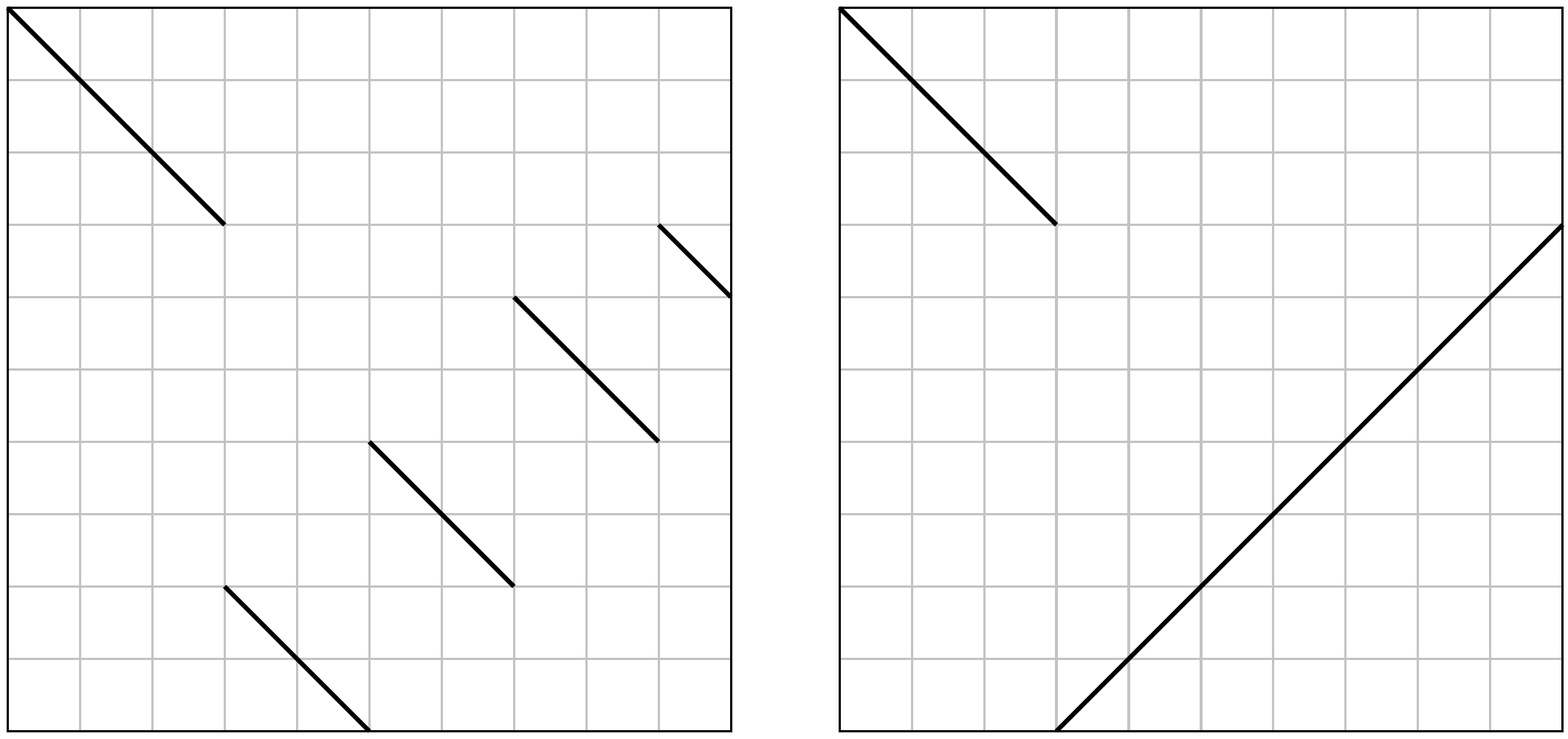}}\vskip-3cm
\caption{\label{fig:abperm}Support of the permutons $\gamma_{.7,.2}$ and $\gamma_{.7,0}$.}
\end{figure}

We interpret $\gamma_{a,0}$ as the permuton containing the segment $y = 1-x$, for $0 \le x \le 1{-}a$, and
the positive-slope diagonal from $(1{-}a,0)$ to $(1,1{-}a)$; finally, $\gamma_{0,0}$ is just the reverse
diagonal from $(0,1)$ to $(1,0)$.  These interpretations are consistent in the sense that $\rho_{12}(\gamma_{a,b})$ and
$\rho_{123}(\gamma_{a,b})$ are continuous functions of $a$ and $b$ on the triangle $0 \le a \le 1$, $0 \le b \le a/2$.
(In fact, $\gamma_{a,b}$ is itself continuous in the topology of $\Gamma$, so all pattern densities are continuous.)

It remains only to check that the comparability graphons corresponding to these permutons match extremal graphs
in \cite{R} as follows:
\begin{itemize}
\item $\gamma_{a,0}$ maps to the upper left boundary of $\cal T$, with $\gamma_{0,0}$ going to the lower left corner
while $\gamma_{1,0}$ goes to the top;
\item $\gamma_{a,a/2}$ goes to the bottom line, with $\gamma_{1,1/2}$ going to the lower right corner;
\item For $1/(k{+}2) \le b \le 1/(k{+}1)$, $\gamma_{1,b}$ goes to the $k$th lowest scallop, with
$\gamma_{1,1/(k+1)}$ going to the bottom cusp of the scallop and $\gamma_{1,1/(k+2)}$ to the top.
\end{itemize}

It follows that $(a,b) \mapsto \big(\rho_{12}(\gamma_{a,b}),\rho_{123}(\gamma_{a,b})\big)$ maps the triangle
$0 \le a \le 1$, $0 \le b \le a/2$ onto all of $\cal T$, proving the theorem.
\end{proof}

It may be prudent to remark at this point that while the feasible region for 1\hh2 versus 1\hh2\hh3 density of permutons
is the same as that for edge and triangle density of graphs, the {\em topography} of the corresponding entropy functions
within this region is entirely different.  In the graph case the entropy landscape is studied in \cite{RS1,RS2,RRS}; one of
its features is a ridge along the ``Erd\H{o}s-R\'{e}nyi'' curve (where triangle density is the 3/2 power of edge
density).  There is a sharp drop-off below this line, which represents the very high entropy graphs constructed
by choosing edges independently with constant probability.   The graphons that maximize entropy at each
point of the feasible region all appear to be very combinatorial in nature: each has a partition of its
vertices into finitely many classes, with constant edge density between any two classes and within any class,
and is thus described by a finite list of real parameters.

The permuton topography features a different high curve, representing the permutons (discussed above) that
maximize entropy for a fixed 1\hh2 density.  Moreover, the permutons that maximize entropy at interior points
of the region appear, as in other regions discussed above, always to be analytic.

We do not know explicitly the maximizing permutons (although they satisfy an explicit PDE, see Section \ref{PDEsection})
or the entropy function. 

\section{$1\hh2\hh3/3\hh2\hh1$ case}

The feasible region for
fixed densities $\rho_{123}$ versus $\rho_{321}$ is the same as
\begin{figure}[htbp]
\center{\includegraphics[width=5.4in]{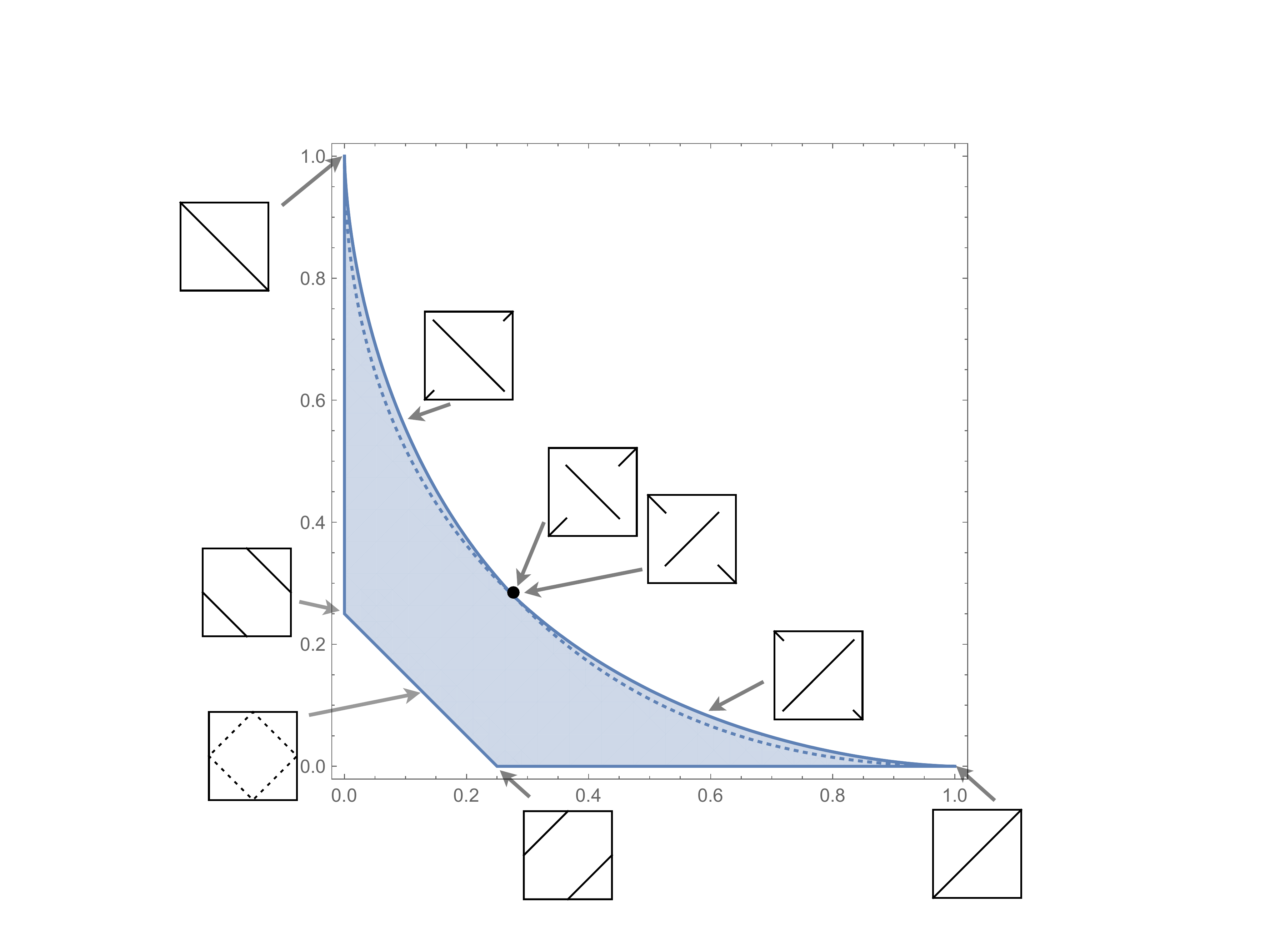}}
\caption{\label{123321region}The feasible region for $\rho_{123},\rho_{321}$. It is bounded above by the parameterized curves
$(1 - 3 t^2 + 2 t^3,t^3)$ and $(t^3, 1 - 3 t^2 + 2 t^3)$ which intersect at $(x,y)=(.278...,.278...)$. The lower boundaries
consist of the axes and the line $x+y=1/4$.}
\end{figure}
the feasible region $\cal B$ for triangle density $x = d(K_3,G)$
versus anti-triangle density $y = d(\overline{K_3},G)$
of graphons  \cite{HLNPS2}.  Let $C$ be the line
segment $x+y=\frac14$ for $0 \le x \le \frac14$,
$D$ the $x$-axis from $ x = \frac14$ to  $x = 1$, and $E$ the $y$-axis
from $y=\frac14$ to $y=1$.
Let $F_1$ be the curve given parametrically by $(x,y) = (t^3, (1-t)^3
+ 3t(1-t)^2)$, for $0 \le t \le 1$,
and $F_2$ its symmetric twin $(x,y) = ((1-t)^3 + 3t(1-t)^2, t^3)$.
Then $\cal B$ is the union of the
area bounded by $C$, $D$, $E$ and $F_1$ and the area bounded by $C$,
$D$, $E$ and $F_2$.

The curves $F_1$ and $F_2$ cross at a concave ``dimple'' $(r,r)$ where
$r = s^3 = (1-s)^3 + 3s(1-s)^2)$,
with $s \sim .653$ and $r \sim .278$; see Fig.~\ref{123321region}.

To see that $\cal B$ is also the feasible region for 1\hh2\hh3 versus 3\hh2\hh1
density of permutons, an argument much
like the one above for 1\hh2 versus 1\hh2\hh3
can be (and was, by
\cite{EN}) given.   Permutons realizing
various boundary points are illustrated in Fig.~\ref{123321region};
they correspond to the extremal
graphons described in \cite{HLNPS2}.  The rest are filled in by
parameterization and a topological argument.

Of note for both graphons and permutons is the double solution at the
dimple.  These solutions are significantly
different, as evidenced by the fact that their edge-densities (1\hh2
densities, for the permutons) differ.
This multiplicity of solutions, if there are no permutons bridging the gap,
suggests a phase transition in the entropy-optimal permuton in the interior of
$\cal B$ in a neighborhood of the dimple.  In fact, we can use a
stability theorem from \cite{HLNPS1} to show that
the phenomenon is real.

Before stating the next theorem, we need a definition:
two $n$-vertex graphs are \emph{$\varepsilon$-close}
if one can be made isomorphic to the other by adding or deleting at most $\varepsilon \cdot {n \choose 2}$ edges.

\begin{theorem}[special case of Theorems 1.1 and 1.2 of \cite{HLNPS1}]\label{thm:stable} For any
$\varepsilon > 0$ there is a $\delta > 0$ and an $N$ such
that for any $n$-vertex graph $G$ with $n>N$ and $d(\overline{K_3},G) \ge p$ and
$|d(K_3,G)-M_p|<\delta$ is $\varepsilon$-close to a graph $H$ on $n$
vertices consisting of a clique and isolated vertices, or
the complement of a graph consisting of a clique and isolated vertices.
Here $M_p := \max \big( (1-p^{1/3})^3 + 3p^{1/3}(1-p^{1/3})^2, (1-q)^{1/3}\big)$
where $q$ is the unique real root of $q^3 + 3q^2(1-q)=p$; that is,
$M_p$ is the largest possible
value of $d(K_3,G)$ given $d(\overline{K_3},G) = p$.
\end{theorem}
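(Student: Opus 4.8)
The plan is to obtain this statement as a specialization of the general stability results of \cite{HLNPS1} (their Theorems 1.1 and 1.2), which describe the approximate structure of graphs whose pair $(d(K_3,G),d(\overline{K_3},G))$ lies near the upper boundary of the feasible region ${\cal B}$. First I would record the extremal configurations on that boundary: a clique on a $\rho$-fraction of the vertices together with the remaining vertices isolated realizes the point $(\rho^3,\,(1-\rho)^3+3\rho(1-\rho)^2)$, tracing the curve $F_1$ as $\rho$ runs over $[0,1]$, while its complement, a complete split graph, traces the reflected curve $F_2$. Consequently, for fixed anti-triangle density $p$, the value $M_p$ is exactly the height of the upper boundary of ${\cal B}$ at $p$: its first term is the triangle density of the extremal point on $F_2$ (attained by the complement of a clique-plus-isolated-vertices graph, with clique fraction $p^{1/3}$ in the underlying graph), and its second term is the triangle density of the extremal point on $F_1$ (attained by a clique-plus-isolated-vertices graph, with $q$ the root of $q^3+3q^2(1-q)=p$ encoding the relative sizes of the clique and isolated parts). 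The larger of the two selects which family is extremal at that $p$.

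Next I would reconcile the hypotheses. The cited theorem is phrased for graphs with anti-triangle density fixed (or nearly fixed) to $p$, whereas here we only assume $d(\overline{K_3},G)\ge p$. But $M$ is strictly decreasing along the relevant arc, so $d(\overline{K_3},G)=p'\ge p$ together with $d(K_3,G)>M_p-\delta$ forces $M_{p'}\ge d(K_3,G)>M_p-\delta$, hence $p'\in[p,p+O(\delta)]$ as long as we stay away from the extreme endpoints; shrinking $\delta$ then places us exactly in the regime covered by \cite{HLNPS1}. I would also verify that the approximate-structure conclusion of \cite{HLNPS1}, phrased in the cut metric or as an edge-edit bound, specializes to the $\varepsilon$-closeness notion defined just above the theorem; for targets that are complete split graphs with a prescribed (integer-rounded) clique size this is routine, since for such structured graphons the cut distance and the normalized edit distance are comparable. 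Assembling these with the quantitative $(\delta,N)$ supplied by \cite{HLNPS1} yields the statement as written, with $H$ taken to be the clique-plus-isolated-vertices graph (or its complement) on $n$ vertices whose clique size matches $p$.

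The main obstacle is not new mathematics but careful bookkeeping: correctly pairing the two terms of $M_p$ with the two extremal families ``clique plus isolated vertices'' and its complement; handling the boundary case where the branches coincide, the dimple $(r,r)$, at which both extremal structures are admissible so the conclusion must allow either; and ensuring that $\delta$ and $N$ can be chosen uniformly over the short range of anti-triangle densities relevant to the intended application, namely a neighborhood of $p=r$, where one wants to conclude that no entropy-optimal permuton can interpolate between the two extremal shapes.
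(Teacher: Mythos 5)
The paper gives no independent proof of Theorem~\ref{thm:stable}; it is stated purely as a specialization of Theorems~1.1 and~1.2 of \cite{HLNPS1}, which is exactly what you do. Your additional bookkeeping (identifying the extremal clique-plus-isolated-vertices graphs with the two branches of $M_p$, using monotonicity of $M_p$ to pass from $d(\overline{K_3},G)\ge p$ to a narrow band $p'\in[p,p+O(\delta)]$, and matching the edit-distance formulations) is all consistent with the intended reading and with how the paper then invokes the result in Lemma~\ref{lemma:split}.
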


From Theorem~\ref{thm:stable} we derive the following lemma.
Note that there are in fact many permutons representing the dimple $(r,r)$
of the feasible region for 123 versus 321, but only two classes
if we consider permutons with isomorphic comparability graphs to be equivalent.
The class that came from the curve $F_1$ has 1\hh2 density $s^2 \sim
.426$, the other $1-s^2 \sim .574$.
(Interestingly, the other end of the $F_1$ curve---represented
uniquely by the identity permuton---had
1\hh2 density 1, while the $F_2$ class ``began'' at 1\hh2 density 0.  Thus,
the 1\hh2 densities crossed on the
way in from the corners of $\cal B$.)

\begin{lemma}\label{lemma:split}
There is a neighborhood of the point $(r,r)$ in the feasible region
for patterns 1\hh2\hh3 and 3\hh2\hh1 within which
no permuton has 1\hh2-density near $\frac12$.
\end{lemma}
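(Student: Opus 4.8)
The plan is to transfer the graph-theoretic stability result, Theorem~\ref{thm:stable}, across the comparability-graph correspondence to permutons, and then observe that the two extremal graph families it identifies force $1\hh2$-density away from $\tfrac12$. First I would argue by contradiction: suppose there is a sequence of permutations $\pi_n \in S_n$ whose $(\rho_{123},\rho_{321})$ densities converge to $(r,r)$ while their $1\hh2$-densities stay within some fixed small window around $\tfrac12$. Passing to the comparability graphs $G(P_{\pi_n})$, the $1\hh2\hh3$ density becomes triangle density $d(K_3,\cdot)$ and the $3\hh2\hh1$ density becomes anti-triangle density $d(\overline{K_3},\cdot)$, exactly as in the proof for the $1\hh2/1\hh2\hh3$ model. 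Since $(r,r)$ is the dimple where $F_1$ and $F_2$ cross, the point $(r,r)$ lies on the upper boundary curve of $\mathcal B$, i.e.\ $d(K_3,G(P_{\pi_n})) \to M_p$ with $p = r$; so for $n$ large the hypotheses of Theorem~\ref{thm:stable} are met (with any prescribed $\varepsilon$, once $\delta$ and $N$ are chosen accordingly).

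Theorem~\ref{thm:stable} then tells us that each $G(P_{\pi_n})$ is $\varepsilon$-close either to a ``clique plus isolated vertices'' graph $H$ or to the complement of such a graph. The next step is to compute the edge density (= $1\hh2$-density of the permutation) that this forces. If $H$ is a clique on $\beta n$ vertices plus isolated vertices, its edge density is $\beta^2 + o(1)$; matching the triangle density $r = \beta^3$ pins $\beta = s$, where $s$ is the root in the statement ($s^3 = r$), so the edge density is $s^2 \sim .426$. In the complement case one similarly gets $1 - s^2 \sim .574$. Being $\varepsilon$-close changes edge density by at most $\varepsilon$, so for small enough $\varepsilon$ the $1\hh2$-density of $\pi_n$ is within, say, $2\varepsilon$ of either $s^2$ or $1-s^2$, both of which are bounded away from $\tfrac12$ by roughly $0.07$. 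Choosing the window around $\tfrac12$ to have radius less than this gap yields the desired contradiction.

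Finally I would phrase the conclusion in permuton language: since every permutation with $(\rho_{123},\rho_{321})$ sufficiently close to $(r,r)$ has $1\hh2$-density outside a fixed neighborhood of $\tfrac12$, the same holds for permutons in a neighborhood of $(r,r)$ in the feasible region, because permuton densities are limits of permutation densities and $\rho_{12}$ is continuous in the permuton topology. One should be slightly careful to quantify: fix the target gap first (the distance from $\{s^2, 1-s^2\}$ to $\tfrac12$), use it to fix $\varepsilon$ in Theorem~\ref{thm:stable}, obtain $\delta$ and $N$, and then the neighborhood of $(r,r)$ of radius $\delta$ in $\mathcal B$ works for all large $n$ and hence for all permutons.

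The main obstacle I anticipate is checking that the hypotheses of Theorem~\ref{thm:stable} really apply, i.e.\ that approaching the dimple $(r,r)$ from inside $\mathcal B$ indeed puts us in the regime $d(\overline{K_3},G)\ge p$ with $|d(K_3,G)-M_p|$ small for the right value of $p$. Because the dimple is a concave corner where $F_1$ and $F_2$ meet, a neighborhood of $(r,r)$ in $\mathcal B$ may approach it along points whose $d(\overline{K_3},\cdot)$ value is only slightly above $r$ or slightly below; one must confirm that the stability theorem's conclusion still binds (possibly applying it at the appropriate $p$ near $r$, using continuity of $M_p$ and of the extremal densities $\beta = p^{1/3}$) and that in both branches the forced edge density stays bounded away from $\tfrac12$. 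A secondary, more bookkeeping-type concern is handling the complement branch of the alternative and the ``or isolated vertices'' flexibility uniformly, but these only shift edge density by $o(1)$ and do not move it toward $\tfrac12$.
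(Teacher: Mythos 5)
Your proposal follows essentially the same strategy as the paper's proof: pass through the comparability-graph correspondence to translate $\rho_{123}$ and $\rho_{321}$ into triangle and anti-triangle densities, invoke the stability Theorem~\ref{thm:stable}, compute that the two extremal families force edge density near $s^2\approx .426$ or $1-s^2\approx .574$, and conclude that $\rho_{12}$ is bounded away from $\tfrac12$. The one substantive gap you flag---that the hypotheses of Theorem~\ref{thm:stable} must be verified near the dimple, where a neighborhood can approach from either the $F_1$ side or the $F_2$ side---is exactly what the paper resolves explicitly: it sets $\delta' = \min(\delta/2,(M_{r-\delta}-r)/2)$ so that $|M_p - r|\le\delta/2$ for $p\in[r-\delta',r]$, applies the theorem with $p=\rho_{321}(\gamma)$ on the rectangle $[r-\delta',r+\delta']\times[r-\delta',r]$, then uses the symmetric argument on $[r-\delta',r]\times[r-\delta',r+\delta']$, and notes that no permuton has both densities exceeding $r$, so those two rectangles cover the whole neighborhood. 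Your contradiction/sequence framing is a cosmetic difference from the paper's direct quantitative bound; the idea, the key theorem, and the numerics are the same, and the detail you acknowledge as needing care is filled in by precisely the continuity-of-$M_p$ argument you anticipate.
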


\begin{proof} 
Apply Theorem~\ref{thm:stable} with $\varepsilon = .07$ to get $\delta>0$ with the property stated in the theorem.
Let $\delta' =\min(\delta/2,(M_{r - \delta}-r)/2)$, which yields that $|M_p-r|\le\delta/2$ for $p\in [r-\delta',r]$.
So, if $|\rho_{123}(\gamma)-r|\le\delta'\le\delta/2$ and $p\in [r-\delta',r]$, then $|\rho_{123}(\gamma)-M_p|\le\delta$
as required by the hypothesis of Theorem~\ref{thm:stable} (noting that
$\rho_{123}(\gamma)$ is the triangle density of the comparability graph corresponding to $\gamma$).
We conclude that any permuton $\gamma$ such that $(\rho_{123}(\gamma),\rho_{321}(\gamma))$ lies
in the rectangle $[r-\delta',r+\delta'] \times [r-\delta',r]$ has
1\hh2-density within $.07$ of either .426 or .574, thus outside the range $[.496,.504]$.

The symmetric argument gives the same conclusion for $(\rho_{123}(\gamma),\rho_{321}(\gamma))$
in the rectangle $[r-\delta',r] \times [r-\delta',r+\delta']$.
Since there are no permutons $\gamma$ with both $(\rho_{123}(\gamma)$ and $\rho_{321}(\gamma)$ larger than $r$,
the lemma follows.
\end{proof}

\section{Proof of Theorem \protect{\ref{thm:tras}}\label{appendix}}

For completeness, we now give a proof of Theorem~\ref{thm:tras}.
We begin with a simple lemma.

\begin{lemma}\label{lemma:usc}
The function $H:~\Gamma \to \R$ is upper semicontinuous.
\end{lemma}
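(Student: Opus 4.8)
The statement to prove is that $H:\Gamma\to\R$ is upper semicontinuous, where $\Gamma$ carries the weak topology (equivalently the $d_\square$ metric or the $L^\infty$ topology on CDFs), and $H(\gamma)=\int_Q -g\log g$ with $H(\gamma)=-\infty$ when $\gamma$ has no density. Since $\Gamma$ is metrizable, it suffices to show: if $\gamma_n\to\gamma$ in $\Gamma$, then $\limsup_n H(\gamma_n)\le H(\gamma)$. The natural approach is to pass through the step-permuton approximation already set up in the paper and exploit the fact that each $H(\gamma^m)$ is a \emph{continuous} function of $\gamma$, combined with the monotone-from-above convergence $H(\gamma^m)\downarrow H(\gamma)$.

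First I would record the key finite-$m$ fact: for fixed $m$, the map $\gamma\mapsto \gamma(Q_{ij})$ is continuous on $\Gamma$ (the $Q_{ij}$ are aligned rectangles, up to boundary issues of measure zero which cause no trouble since limits in $\Gamma$ are also limits of the CDFs $G$, and one may use half-open corners), hence $\gamma\mapsto H(\gamma^m)=-\frac1{m^2}\sum_{ij} g_{ij}\log g_{ij}$ with $g_{ij}=m^2\gamma(Q_{ij})$ is a continuous (indeed, a fixed continuous function of finitely many continuous functionals) real-valued function on $\Gamma$. Second, I would invoke the concavity/Jensen inequality underlying the Riemann-sum picture: since $u\mapsto u\log u$ is convex, $H(\gamma^m)\ge H(\gamma)$ for every $m$ and every $\gamma$ (averaging $g$ over each $Q_{ij}$ only decreases $\int -g\log g$); this is exactly the remark ``the quantity on the left cannot be negative'' in the proof of Proposition~\ref{thm:limits}. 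Third, Proposition~\ref{thm:converge} gives $\lim_{m\to\infty} H(\gamma^m)=H(\gamma)$ for every permuton $\gamma$ (with downward divergence in the $-\infty$ case).

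Putting these together: for any $\gamma_n\to\gamma$ and any fixed $m$,
\[
\limsup_{n\to\infty} H(\gamma_n)\ \le\ \limsup_{n\to\infty} H(\gamma_n^m)\ =\ H(\gamma^m),
\]
where the first inequality is the $m$-th Jensen bound applied to each $\gamma_n$ and the equality is continuity of $H(\cdot^m)$. Now let $m\to\infty$ and use Proposition~\ref{thm:converge} to get $\limsup_n H(\gamma_n)\le \inf_m H(\gamma^m)=H(\gamma)$. This handles the finite case and, because $H(\gamma^m)\to-\infty$ when $H(\gamma)=-\infty$, the singular/non-integrable cases as well.

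\textbf{Main obstacle.} The only genuinely delicate point is the continuity of $\gamma\mapsto\gamma(Q_{ij})$ in the weak topology: cube boundaries are not open, so weak convergence of measures need not preserve measures of half-open boxes in general. Here, however, convergence in $\Gamma$ is convergence of the CDFs $G_n\to G$ in $L^\infty$ (stated in the excerpt), so $\gamma_n(Q_{ij})$, being a fixed signed combination of four values $G_n(i/m,j/m)$, converges; this sidesteps the issue entirely. I would make this explicit rather than appeal to generic weak-convergence facts. Everything else is routine: the Jensen inequality $H(\gamma^m)\ge H(\gamma)$ and Proposition~\ref{thm:converge} are already in hand, and the interchange of $\limsup_n$ with the fixed-$m$ bound needs no uniformity since $m$ is frozen before $n\to\infty$.
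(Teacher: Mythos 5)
Your proof is correct and takes essentially the same approach as the paper: both pass through the step-permuton approximation $\gamma^m$, use the Jensen inequality $H(\gamma^m)\ge H(\gamma)$, the continuity of $\gamma\mapsto H(\gamma^m)$ for fixed $m$, and Proposition~\ref{thm:converge} to let $m\to\infty$. The only difference is cosmetic --- the paper splits into the cases $H(\gamma)>-\infty$ and $H(\gamma)=-\infty$, while you handle both at once via $\inf_m H(\gamma^m)=H(\gamma)$, and you spell out the continuity step via the CDFs rather than leaving it implicit.
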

\begin{proof}Let $\gamma_1, \gamma_2, \dots$ be a sequence of permutons approaching the permuton $\gamma$
(in the $d_\square$-topology); we need to show that $H(\gamma) \ge \lim\sup H(\gamma_n)$.

If $H(\gamma)$ is finite, fix $\eps > 0$ and take $m$ large enough so that $|H(\gamma^m) - H(\gamma)| < \eps$; then
since $H(\gamma^m_n)\ge H(\gamma_n)$ by concavity,
\be
\lim\sup_n H(\gamma_n) \le \lim\sup_n H(\gamma^m_n) = H(\gamma^m) < \eps + H(\gamma)
\ee
and since this holds for any $\eps>0$, the claimed inequality follows.

If $H(\gamma)=-\infty$, fix $t<0$ and take $m$ so large that $H(\gamma^m)<t$.  Then
\be
\lim\sup_n H(\gamma_n) \le \lim\sup_n H(\gamma^m_n) = H(\gamma^m) < t
\ee
for all $t$, so $\lim\sup_n H(\gamma_n^m) \to -\infty$ as desired.
\end{proof}

Let $B(\gamma,\eps) = \{\gamma'|d_\square(\gamma,\gamma') \le \eps\}$ be the (closed) ball in $\Gamma$ of radius $\eps>0$ centered at the permuton $\gamma$,
and let $B_n(\gamma,\eps)$ be the set of permutations $\pi \in S_n$ with $\gamma_\pi \in B(\gamma,\eps)$.

\begin{lemma}\label{lemma:ball}
 For any permuton $\gamma$, $\lim_{\eps \downarrow 0} \lim_{n \to \infty} \frac1n \log (|B_n(\gamma,\eps)|/n!)$ exists and equals $H(\gamma)$.
\end{lemma}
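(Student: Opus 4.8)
The plan is to prove the two inequalities separately, matching the upper and lower bounds of Theorem~\ref{thm:tras} applied to the closed ball $B(\gamma,\eps)$ and relating them both to $H(\gamma)$ as $\eps \downarrow 0$. Write $L_n(\eps) := \tfrac1n \log(|B_n(\gamma,\eps)|/n!)$. Note first that $L_n(\eps)$ is nondecreasing in $\eps$, so once we establish $\lim_{n\to\infty} L_n(\eps)$ exists for each fixed $\eps$, the outer limit $\lim_{\eps\downarrow 0}$ exists as well (a monotone limit of limits). So the real content is to sandwich these quantities by $H(\gamma)$.

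For the upper bound: $B(\gamma,\eps)$ is closed, so by part~(1) of Theorem~\ref{thm:tras}, $\limsup_{n\to\infty} L_n(\eps) \le \sup_{\gamma' \in B(\gamma,\eps)} H(\gamma')$. Now I use upper semicontinuity of $H$ (Lemma~\ref{lemma:usc}): I claim that $\sup_{\gamma' \in B(\gamma,\eps)} H(\gamma') \to H(\gamma)$ as $\eps \downarrow 0$. Indeed, the sup over the shrinking closed balls is nonincreasing in $\eps$ and bounded below by $H(\gamma)$, so it has a limit $\ell \ge H(\gamma)$; picking $\gamma_\eps \in B(\gamma,\eps)$ with $H(\gamma_\eps)$ within $\eps$ of the sup, we get $\gamma_\eps \to \gamma$ and $\limsup H(\gamma_\eps) \ge \ell$, so by upper semicontinuity $H(\gamma) \ge \ell$, giving $\ell = H(\gamma)$. (If $H(\gamma) = -\infty$ the same argument shows the sups decrease to $-\infty$, reading the inequalities in $[-\infty,\infty)$.) Hence $\lim_{\eps\downarrow 0}\limsup_{n\to\infty} L_n(\eps) \le H(\gamma)$.

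For the lower bound: the open ball $B^\circ(\gamma,\eps)$ is open, so by part~(2) of Theorem~\ref{thm:tras}, $\liminf_{n\to\infty} L_n(\eps) \ge \liminf_{n\to\infty}\tfrac1n\log(|B^\circ_n(\gamma,\eps)|/n!) \ge \sup_{\gamma'\in B^\circ(\gamma,\eps)} H(\gamma') \ge H(\gamma)$, the last step because $\gamma \in B^\circ(\gamma,\eps)$. This already gives $\liminf_{n\to\infty} L_n(\eps) \ge H(\gamma)$ for every $\eps>0$. Combining with the upper bound: for each fixed $\eps$, $H(\gamma) \le \liminf_n L_n(\eps) \le \limsup_n L_n(\eps) \le \sup_{B(\gamma,\eps)} H$; since the left side is a constant and the right side tends to $H(\gamma)$, we get $\lim_n L_n(\eps)$ exists and lies in $[H(\gamma), \sup_{B(\gamma,\eps)} H]$, and then $\lim_{\eps\downarrow 0}\lim_n L_n(\eps) = H(\gamma)$ by the squeeze. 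The main subtlety to handle carefully is the $H(\gamma)=-\infty$ case and the passage between open and closed balls, but neither presents a genuine obstacle — the only real ingredients are the two halves of Theorem~\ref{thm:tras}, the upper semicontinuity of $H$, and the monotonicity of $L_n$ in $\eps$.
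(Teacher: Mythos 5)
Your argument is circular in the context of the paper: Lemma~\ref{lemma:ball} appears inside the proof of Theorem~\ref{thm:tras}, and it is precisely the combinatorial engine that \emph{proves} that theorem (the ensuing proof of Theorem~\ref{thm:tras} consists of covering a closed $\Lambda$ by finitely many balls and applying the lemma, and for open $\Lambda$ finding a ball inside $\Lambda$ around a near-optimal permuton). You instead invoke Theorem~\ref{thm:tras} to derive Lemma~\ref{lemma:ball}, which inverts that logical order. Your derivation would be sound if one were willing to import Trashorras's theorem as a black box from the literature, and the soft part of your argument (closed balls plus upper semicontinuity of $H$ to get $\sup_{B(\gamma,\eps)} H \downarrow H(\gamma)$, open balls containing $\gamma$ to get the matching lower bound, handling $H(\gamma)=-\infty$) is correct; but then the lemma would be superfluous, since it exists only to serve the alternative proof of the theorem it is embedded in.

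What is actually needed is a direct, self-contained estimate of $|B_n(\gamma,\eps)|$ that does not pass through the large deviations principle. The paper does this by discretizing: fix $m$ with $|H(\gamma^m)-H(\gamma)|$ small, round the cell masses $n\gamma(Q_{ij})$ to integers $n_{ij}$ with prescribed row and column sums $n/m$ (Baranyai's rounding lemma), and then count permutations in two phases: first distribute the $n/m$ points of each column block among the $m$ row blocks according to the $n_{ij}$ (a product of multinomial coefficients, contributing $\exp(n\log m + nH(\gamma^m) + o(n))$), and second order the points within each row block (contributing $(n/m)!^m = \exp(n\log n - n - n\log m + o(n))$). Multiplying gives the lower bound via a set $U \subset B_n(\gamma,\eps)$; a matching upper bound comes from a set $V \supset B_n(\gamma,\eps)$ defined by allowing the cell counts to range over windows of width $O(n\sqrt{\eps})$, with the same Phase I/II count plus a negligible preliminary factor. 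The approximation $H(\gamma^m)\to H(\gamma)$ is Proposition~\ref{thm:converge}, proved separately. This explicit counting is the content you would need to supply; deducing the lemma from Theorem~\ref{thm:tras} does not do the job.
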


\begin{proof} Suppose $H(\gamma)$ is finite.  It suffices to produce two sets of permutations, $U \subset B_n(\gamma,\eps)$ and $V \supset B_n(\gamma,\eps)$, each
of size
\be
\exp\left(n \log n - n + n(H(\gamma) + o(\eps^0)) + o(n)\right)
\ee
where by $o(\eps^0)$ we mean a function of $\eps$ (depending on $\gamma$) which
approaches 0 as $\eps \to 0$.  (The usual notation here would be $o(1)$; we use
$o(\varepsilon^0)$ here and later to make it clear that the relevant variable is $\varepsilon$
and not, e.g., $n$.)

To define $U$, fix $m > 5/\eps$ so that $|H(\gamma^m) - H(\gamma)| < \eps$ and let $n$ be a multiple of $m$ with $n > m^3/\eps$.
Choose integers $n_{i,j}$, $1 \le i,j \le m$, so that:
\begin{enumerate}
\item $\sum_{i=1}^n n_{i,j} = n/m$ for each $j$;
\item $\sum_{j=1}^n n_{i,j} = n/m$ for each $i$; and
\item $|n_{i,j} - n\gamma(Q_{ij})| < 1$ for every $i,j$.
\end{enumerate}
The existence of such a rounding of the matrix $\{n\gamma(Q_{ij})\}_{i,j}$ is guaranteed by Baranyai's rounding lemma \cite{B}.

Let $U$ be the set of permutations $\pi \in S_n$ with exactly $n_{i,j}$ points in the square $Q_{ij}$,
that is, $|\{i:~ (i/n,\pi(i)/n) \in Q_{ij}\}| = n_{i,j}$, for every $1 \le i,j \le m$.
We show first that $U$ is indeed contained in $B_n(\gamma,\eps)$.  Let $R = [a,b]\times [c,d]$ be a
rectangle in $[0,1]^2$.  $R$ will contain all $Q_{ij}$ for $i_0 < i < i_1$ and $j_0 < j < j_1$ for
suitable $i_0$, $i_1$, $j_0$ and $j_1$, and by construction the $\gamma_\pi$-measure of the union
of those rectangles will differ from its $\gamma$-measure by less than $m^2/n < \eps/m$.  The squares
cut by $R$ are contained in the union of two rows and two columns of width $1/m$, and hence, by
the construction of $\pi$ and the uniformity of the marginals of $\gamma$, cannot contribute more
than $4/m < 4\eps/5$ to the difference in measures.  Thus, finally, $d_\square(\gamma_\pi,\gamma) < \eps/m + 4\eps/5 < \eps$.

Now we must show that $|U|$ is close to the claimed size 
\be \exp\big(n \log n - n - H(\gamma)n\big)\ee  
We construct
$\pi \in U$ in two phases of $m$ steps each.  In step $i$ of Phase I, we decide for each $k$, $(i{-}1)n/m < k \le in/m$,
which of the $m$ $y$-intervals $\pi(k)$ should lie in. There are
\be
{n/m \choose n_{i,1},n_{i,2},\dots,n_{i,m}} = \exp\big((n/m)h_i + o(n/m)\big)
\ee
ways to do this, where $h_i = -\sum_{j=1}^m (n_{i,j}/(n/m)) \log (n_{i,j}/(n/m))$ is the entropy of
the probability distribution $n_{i,\cdot}/(n/m)$. 

Thus, the number of ways to accomplish Phase I is
\begin{eqnarray}
\exp\big(o(n)&+&(n/m)\sum_i h_i\big) =  \exp\big(o(n)-\sum_{i,j} n_{i,j} \log (n_{i,j}/(n/m))\big)\cr
& = & \exp\left(o(n)-\sum_{i,j}n_{i,j} (\log (n_{i,j}/n) + \log m)\right)\cr
& = & \exp\left(o(n) - n\log m - \sum_{i,j}n_{i,j} \log \gamma(Q_{ij})\right)\cr
& = & \exp\left(o(n) - n \log m - n\sum_{i,j} \gamma(Q_{ij}) \log \gamma(Q_{ij})\right)~.
\end{eqnarray}
Recalling that the value taken by the density $g^m$ of $\gamma^m$ on the points of $Q_{ij}$ is $m^2\gamma(Q_{ij})$, we have that
\begin{eqnarray}
H(\gamma^m)& =& \sum_{i,j}\frac{1}{m^2} \big(-m^2 \gamma(Q_{ij}) \log (m^2 \gamma(Q_{ij}))\big)\cr
&=& -\sum_{i,j} \gamma(Q_{ij}) (\log \gamma(Q_{ij}) + 2\log m) \cr
&=& -\sum_{i,j} \gamma(Q_{ij}) (\log \gamma(Q_{ij}) + 2\log m) \cr
&=& -2\log m - \sum_{i,j} \gamma(Q_{ij}) \log \gamma(Q_{ij})~.
\end{eqnarray}
Therefore we can rewrite the number of ways to do Phase I as  
\be\exp\big(n \log m + nH(\gamma^m) + o(n)\big).\ee

In Phase II we choose a permutation $\pi_j \in S_{n/m}$ for each $j$, $1 \le j \le m$, and order the $y$-coordinates of the $n/m$
points (taken left to right) in row $j$ according to $\pi_j$.  Together with Phase I this determines $\pi$ uniquely, and the number
of ways to accomplish Phase II is
\begin{eqnarray}
(n/m)!^m &=& \left(\exp\left(\frac{n}{m} \log \frac{n}{m} -
    \frac{n}{m} + o(n/m)\right)\right)^m \cr
&=& \exp\big(n \log n - n - n \log m + o(n)\big)
\end{eqnarray}
so that in total,
\begin{eqnarray}
|U| &\ge&  \exp\big(n \log m + nH(\gamma^m) + o(n)\big)\exp\big(n \log n - n - n \log m + o(n)\big)\cr
&=& \exp\big(n \log n - n + nH(\gamma^m) + o(n)\big)
\end{eqnarray}
which, since $|H(\gamma)-H(\gamma^m)| < \eps$, does the job.

We now proceed to the other bound, which involves similar calculations in a somewhat different context.  To define the required set
$V \supset B_n(\gamma,\eps)$ of permutations we must allow a wide range for the number of points of $\pi$ that fall in each square $Q_{ij}$---
wide enough so that a violation causes $Q_{ij}$ itself to witness $d_\square(\gamma_\pi,\gamma) > \eps$, thus guaranteeing that if $\pi \not\in V$
then $\pi \not\in B_n(\gamma,\eps)$.

To do this we take $m$ large, $\eps < 1/m^4$, and $n > 1/\eps^2$.  We define $V$ to be the set
of permutations $\pi \in S_n$ for which the number of points $(k/n,\pi(k)/n)$ falling in $Q_{ij}$ lies in the range
$[n(\gamma(Q_{ij})-\sqrt\eps),n(\gamma(Q_{ij})+\sqrt\eps)]$.  Then, as promised, if $\pi \not\in V$ we have a rectangle $R = Q_{ij}$ with
$|\gamma(R) - \gamma_\pi (R)| > \sqrt\eps/m^2 > \eps$.

It remains only to bound $|V|$.  Here a preliminary phase is needed in which the exact count of points in each square $Q_{ij}$ is determined; since
the range for each $n_{i,j}$ is of size $2n\sqrt\eps$, there are at most $\left(2n\sqrt\eps\right)^{m^2} = \exp\big(m^2\log (2n\sqrt\eps)\big)$ ways to do this,
a negligible factor since $m^2\log(n\sqrt\eps) = o(n)$.  For Phase I we must assume the $n_{i,j}$ are chosen to maximize each $h_i$ but since the
entropy function $h$ is continuous, the penalty shrinks with $\eps$.  Counting as before, we deduce that here the number of ways to accomplish Phase I is bounded by
\begin{eqnarray}
& & \exp\big(n \log m + n(H(\gamma^m) + o(\eps^0)) + o(n)\big)\nonumber\\
& & = \exp\big(n \log m + n(H(\gamma) + o(\eps^0)) + o(n)\big).
\end{eqnarray}
The computation for Phase II is exactly as before and the conclusion is that
\begin{eqnarray}
|V| &\le& \exp\big(n \log m - n + n(H(\gamma) + o(\eps^0)) + o(n)\big) \cr
&&\hskip1cm\times\exp\big(n \log n - n - n \log m + o(n)\big)\cr
&=& \exp\big(n \log n - n + nH(\gamma) + o(n)\big)
\end{eqnarray}
proving the lemma in the case where $H(\gamma) > -\infty$.

If $H(\gamma) > -\infty$, we need only the upper bound provided by the set $V$.  Fix $t < 0$ with the idea of showing that 
$\frac{1}{n}\log\frac{|B_n(\gamma,\eps_\gamma)|}{n!} < t$.  Define $V$ as above, first insuring that $m$ is large enough so that
$H(\gamma^m) < t{-}1$.  Then the number of ways to accomplish Phase I is bounded by
\be
\exp\big(n \log m +n(H(\gamma^m) + o(\eps^0)) + o(n)\big)\\
< \exp\big(n \log m + n(t{-}1 + o(\eps^0)) + o(n)\big)
\ee
and consequently $|V|$ is bounded above by
\be
\exp\big(n \log n - n + n(t{-}1) + o(n)\big) < \exp\big(n \log n - n + nt\big)~.
\ee
\end{proof}

We are finally in a position to prove Theorem~\ref{thm:tras}.  If our set $\Lambda$ of permutons is closed, then, since $\Gamma$ is compact, so is $\Lambda$.
Let $\delta>0$ with the idea of showing that
\be
\lim_{n \to \infty}\frac{1}{n}\log\frac{|\Lambda_n|}{n!} \le H(\mu) + \delta
\ee
for some $\mu \in \Lambda$.  If not, for each $\gamma \in \Lambda$ we may, on account of Lemma~\ref{lemma:ball}, choose $\eps_\gamma$ and $n_\gamma$ so that
$\frac{1}{n}\log\frac{|B_n(\gamma,\eps_\gamma)|}{n!} < H(\gamma)+\delta/2$ for all $n \ge n_\gamma$.  Since a finite number of these balls cover $\Lambda$, we
have too few permutations in $\Lambda_n$ for large enough $n$, and a contradiction has been reached.

If $\Lambda$ is open, we again let $\delta>0$, this time with the idea of showing that 
\be
\lim_{n \to \infty}\frac{1}{n}\log\frac{|\Lambda_n|}{n!} \ge H(\mu) - \delta~.
\ee
To do this we find a permuton $\mu \in \Lambda$ with
\be
H(\mu) > \sup_{\gamma \in \Lambda}H(\gamma) - \delta/2~,
\ee
and choose $\eps > 0$ and $n_0$ so that $B_n(\mu,\eps) \subset \Lambda$ and
$\frac{1}{n}\log\left(\frac{|B_n(\mu,\eps)|}{n!}\right) > H(\mu)-\delta/2$ for $n \ge n_0$.

\medskip

This concludes the proof of Theorem~\ref{thm:tras}.

\section*{Acknowledgments}

The second author would like to thank Roman Glebov, Andrzej Grzesik
and Jan Volec for discussions on pattern densities in permutons, and
the third author would like to thank Sumit Mukherjee for pointing out
the papers \cite{Mu} and \cite{Tr}.

The authors gratefully acknowledge the support of the National Science Foundation's
Institute for Computational and Experimental Research in Mathematics, at Brown University,
where this research was conducted.
This work was also partially supported by NSF grants DMS-1208191,
DMS-1509088 and DMS-0901475, the Simons Foundation award no.~327929, 
and by the European Research Council under thev
European Union's Seventh Framework Program (FP7/2007-2013)/ERC grant
agreement no.~259385.

\end{document}